\newtheorem{theorem}{Theorem}[section]
\newtheorem{prop}[theorem]{Proposition}
\newtheorem{lemma}[theorem]{Lemma}
\newtheorem{Corollary}[theorem]{Corollary}
\newtheorem{thm}{Theorem}
\theoremstyle{definition}
\newtheorem{defi}[theorem]{Definition}
\theoremstyle{remark}
\newtheorem{Remark}[theorem]{Remark}
\def\R{\mathbb{R}}
\newcommand{\pp}[2]{\frac{\partial#1}{\partial#2}}
\newcommand{\bigslant}[2]{{\raisebox{.2em}{$#1$}\left/\raisebox{-.2em}{$#2$}\right.}}
\newcommand{\mS}{\mathbb{S}}
\title[Periodic orbits and Birkhoff sections of SHS]{Periodic orbits and Birkhoff sections of\\ Stable Hamiltonian structures}
\author{Robert Cardona}
\address{Robert Cardona, Departament de Matem\`atiques i Inform\`atica, Universitat de Barcelona, Gran Via de Les Corts Catalanes 585, 08007 Barcelona, Spain and Centre de Recerca Matem`atica, Campus de Bellaterra, Edifici C, 08193, Barcelona, Spain}
\email{robert.cardona@ub.edu}
\thanks{R. Cardona thanks the LabEx IRMIA and the Universit\'e de Strasbourg for their support, and was partially supported by the AEI grant PID2019-103849GB-I00 / AEI / 10.13039/501100011033}
\author{Ana Rechtman}
\address{Ana Rechtman, Institut Fourier, Universit\'e Grenoble Alpes, 100 rue des math\'ematiques, 38610 Gi\`eres, France;
Institut Universitaire de France (IUF)}
\email{ana.rechtman@univ-grenoble-alpes.fr}
\urladdr{https://www-fourier.ujf-grenoble.fr/~rechtmaa/}
\thanks{Both authors are partially supported by the ANR grant CoSyDy (ANR-CE40-0014).}
\begin{document}

\begin{abstract}
Stable Hamiltonian structures generalize contact forms and define a volume-preserving vector field known as the Reeb vector field. 
We study two aspects of Reeb vector fields defined by stable Hamiltonian structures on 3-manifolds: on one hand, we classify all the examples with finitely many periodic orbits under a non-degeneracy condition; on the other, we give sufficient conditions for the existence of a supporting broken book decomposition and for the existence of a Birkhoff section. 
\end{abstract}

\maketitle

\tableofcontents 

\section{Introduction}\label{sec:intro}

The aim of this paper is to extend recent results  concerning Reeb vector fields defined by contact forms (\cite{CDR,CDHR,CM}) to Reeb vector fields defined by stable Hamiltonian structures (SHS) on closed 3-manifolds, a larger set of volume-preserving vector fields. These results concern the number of periodic orbits and the existence of either a supporting broken book decomposition or a Birkhoff section for generic sets of these vector fields. Both, broken book decompositions and Birkhoff sections, are  powerful tools for studying the dynamics of 3-dimensional flows.

Stable Hamiltonian structures naturally arise on ``stable" regular energy level sets of a Hamiltonian system, which generalize contact-type energy level sets. Stability is defined as the existence of some tubular neighborhood foliated by hypersurfaces with conjugated characteristic foliations, we refer to \cite{HZ,CV1} for more details. The SHS uniquely determines a Reeb vector field, which corresponds, up to reparametrization, to the restriction of the Hamiltonian vector field to the stable hypersurface.

\subsection{Background notions}
In this paper we are interested in the Reeb vector fields of stable Hamiltonian structures, or SHS for short. All the structures considered are $C^\infty$ unless otherwise stated.

A SHS on a closed 3-manifold $M$ is a pair of a differential 1-form $\lambda$ and a closed 2-form $\omega$, such that $\lambda\wedge \omega\neq 0$ and $\ker \omega\subset \ker d\lambda$. Observe that the second condition implies that the two kernels coincide except at the points where $d\lambda=0$. The Reeb vector field is defined as the unique vector field spanning the kernel of $\omega$ and such that $\lambda(X)=1$. In other words, up to parametrization the orbits of $X$ are defined by $\omega$ while $\lambda$ controls the parametrization.

Hofer and Zehnder \cite{HZ} introduced stability in the context of regular energy level sets of Hamiltonian systems. From a topological perspective, stable Hamiltonian structures were deeply studied in dimension 3 in seminal works by Cieliebak and Volkov \cite{CV0, CV1}, and appear as well in symplectic field theory \cite{EGH, BEHWZ, CiMo}, and other works in symplectic topology \cite{EKP, W1, LW, NW, MNW,MS}. Their dynamical properties recently attracted interest \cite{CFP1,CFP2, MP, CV2, C}, especially since the proof of the Weinstein conjecture in this context \cite{HT, Re}. In closed 3-manifolds, Reeb vector fields defined by SHS are also known as volume-preserving geodesible vector fields \cite{Re}, and arise as well as special stationary solutions to the Euler equations in hydrodynamics \cite{CV1}.

\smallskip

Consider a stable Hamiltonian structure $(\lambda,\omega)$ on a closed 3-manifold with Reeb vector field $X$. The function $f=d\lambda/\omega$ is a first integral of $X$. If $d\lambda$ is identically zero, the Reeb vector field admits a global section and the ambient manifold fibers over $\mathbb{S}^1$ by Tischler's theorem \cite{Tis}. If $f$ is either strictly positive or strictly negative, the Reeb vector field of the SHS is, up to parametrization, the Reeb vector field of the contact form $\lambda$ that we denote $X_\lambda$. In general, Cieliebak and Volkov's results \cite{CV1} imply that any SHS admits a (non-unique) \emph{structural decomposition} (see Definition~\ref{defn-strucdecom}). This is a way of decomposing the manifold $M$ into domains $U$ and $N=N_0\sqcup N_c$, whose boundary components are invariant tori of the Reeb vector field and such that $\operatorname{int}(U)\cup \operatorname{int}(N)=M$.  The closure of the connected components of these domains are manifolds with boundary with the following property. In a connected component of $U$, the Reeb vector field is integrable, i.e. the vector field is tangent to a fibration by tori. In a connected component of $N_0$, the Reeb vector field admits a global section. Finally, for a connected component of $N_c$, the Reeb vector field is the Reeb vector field of some contact form. The last region is called here the \textit{contact region}. The part $U$ where the vector field is integrable is called the \textit{integrable region} and its connected components are of the form $T^2\times I$, with $I=[0,1]$ and with the vector field tangent to each torus $T^2\times \{\cdot \}$.

\subsection{Transverse and Birkhoff sections} We use in this paper a plethora of transverse surfaces to a given vector field, introduced in this paragraph. Consider a non-singular vector field $X$ on a compact 3-manifold $M$. If $\partial M\neq \emptyset$, we assume that $X$ is tangent to $\partial M$. A {\it section} or {\it global section} of $X$ is an embedded closed surface that is everywhere transverse to $X$ and intersects all the orbits of $X$.  If the ambient manifold $M$ has boundary, a section is an embedded surface with boundary, whose boundary is mapped to $\partial M$ and satisfies the previous conditions. Observe that in these two cases, $M$ fibers over $\mathbb{S}^1$ and the dynamics of the flow of $X$ (up to parametrization) is captured by the first return map to the section. 

The previous situation cannot apply to all 3-manifolds, hence we consider more general sections of flows. 
A surface is a {\it transverse surface} if it is immersed in $M$, its interior is embedded and transverse to $X$, while its boundary is a collection of periodic orbits of $X$ or is contained in $\partial M$. In case $\partial M\neq \emptyset$, the surface is embedded near $\partial M$. A transverse surface becomes a {\it Birkhoff section} if it intersects all the orbits of $X$ in bounded time.     

Observe that we do not ask a Birkhoff section or a transverse surface to be embedded along the boundary components that are mapped to periodic orbits: we allow a connected component of the boundary to cover several times a periodic orbit and that different connected components are mapped to the same periodic orbit.

Birkhoff sections are sometimes called in the literature  {\it global surfaces of section (GSS)}, but we reserve this name, as done in other works, for Birkhoff sections whose boundary is embedded. There is a well-defined first return map in the interior of a Birkhoff section, that captures important dynamical features of the flow and allows transferring results from surface dynamics to dynamics of 3-dimensional flows and backward.

\subsection{Results on the existence of Birkhoff sections} In order to state the main theorems in the paper we need to define some sub-families of stable Hamiltonian structures. 

Consider now the periodic orbits of a non-singular vector field and the linearized Poincar\'e map of each of them. 
The vector field is non-degenerate if the eigenvalues are not equal to 1 (even when one considers the iterations of the map). Hence the linearized Poincar\'e map of a periodic orbit of a volume-preserving non-degenerate vector field is either an irrational rotation or has two real eigenvalues. The corresponding periodic orbit is called elliptic or hyperbolic, respectively. In general, non-degeneracy is a $C^\infty$-generic condition among non-singular vector fields, but not among Reeb vector fields defined by SHS. Instead, we consider {\it contact non-degenerate SHS} (Definition~\ref{def:contactND}), whose Reeb vector field is non-degenerate in the contact region of some structural decomposition. A {\it strongly contact non-degenerate}  SHS is a contact non-degenerate SHS satisfying the following additional condition:  the intersection between the stable and unstable manifolds of any hyperbolic periodic orbit in the contact region is transverse.

We consider the following sets of SHS:
\begin{enumerate}
    \item the set $\mathcal{B}$ of contact non-degenerate SHS such that, there is a structural decomposition with a contact region where the Reeb vector field is non-degenerate and in each connected component of the integrable region $U$ the slopes of the Reeb vector field are non-constant (see \textsection\ref{ss:SHS} for more details).
    \item the set $\mathcal{B}_s\subset \mathcal{B}$ of strongly contact non-degenerate SHS such that, there is a structural decomposition with a contact region where the Reeb vector field is strongly non-degenerate and in each connected component of the integrable region $U$ the slopes of the Reeb vector field are non-constant.
\end{enumerate}

\medskip

Colin, Dehornoy, Hryniewicz, and the second author in \cite{CDHR} and Contreras and Mazzuchelli in \cite{CM}, established independently that the set
of Reeb vector fields of contact forms admitting a Birkhoff section contains an open and dense set in the $C^\infty$-topology. These results are based on the construction of broken book decompositions for non-degenerate Reeb vector fields of a contact form. Broken book decompositions (see Definitions ~\ref{defn:BBD} and~\ref{defn:BBD2}) were introduced in \cite{CDR} and provide a finite collection of transverse surfaces that intersect all orbits of the vector field. In analogy, our main result is

\begin{thm}\label{thm:mainBirk}
On any closed connected 3-manifold, 
\begin{itemize}
\item[-] every Reeb vector field of a SHS in  a $C^2$-neighborhood of the set $\mathcal{B}$ is supported by a broken book decomposition;
\item[-] every Reeb vector field of a SHS in a $C^2$-neighborhood of the set $\mathcal{B}_s$ admits a Birkhoff section;
\item[-] the set $\mathcal{B}_s$ is $C^1$-dense among the set of SHS.
\end{itemize}
\end{thm}

The first two parts of this theorem are proved in \textsection\ref{s:SHSsections}, they correspond respectively to Theorems~\ref{thm:bbook} and \ref{thm:Bsection}.
The last statement follows from results in \cite{CV1} and is explained in \textsection\ref{ss:density}. For the proof of the first two parts, we construct a broken book decomposition or a Birkhoff section for the Reeb vector field of a SHS in $\mathcal{B}$ and $\mathcal{B}_s$, respectively. We then extend the construction to a neighborhood of these sets using results from \cite{CV2}.

We now consider the set $\mathcal{CSHS}(M)$ of vector fields that are a (positive or negative) reparametrizations of the Reeb vector field defined by some SHS on a closed 3-manifold $M$ and $\mathfrak{X}(M)$ the set of vector fields on $M$. If we allow to perturb in this set, instead of perturbing in the set of pairs of differential forms that define a SHS, we obtain a stronger statement.

\begin{thm}\label{thm:generic}
 Let $M$ be a closed connected 3-manifold. There exists a $C^\infty$-dense subset $A\subset \mathcal{CSHS}(M)$ such that every vector field in a $C^1$-open neighborhood of $A$ inside $\mathfrak{X}(M)$ admits a Birkhoff section.
\end{thm}
Observe that, in particular, it follows that a $C^\infty$-generic flow in $\mathcal{CSHS}(M)$ admits a Birkhoff section. The same statement holds if all the vector fields to preserve a given volume form, see Remark \ref{rem:fixvol}. Another genericity statement that we show is that the set of SHS that admit a Birkhoff section contains a $C^1$-open and $C^1$-dense set among the SHS, see Theorem~\ref{thm:C1generic}. This differs with the previous statement because we perturb the SHS instead of the vector field.

{
 \begin{Remark}
     Even if Theorem~\ref{thm:generic} might apparently imply that there should be a $C^\infty$-dense set of SHS whose Reeb vector field admits a Birkhoff section, this is not the case. Notice that given a vector field in $X\in \mathcal{CHSH}(M)$ and a SHS $(\lambda,\omega)$ such that $\lambda(X)>0, \iota_X\omega=0$, Theorem~\ref{thm:generic} implies that arbitrarily close to $X$ there is some $Y$ with the following property: there is a SHS$(\lambda',\omega')$ (and even $\omega'$ can be chosen arbitrarily $C^\infty$-close to $\omega$) such that $\lambda'(Y)>0$. However, the 1-form $\lambda'$ will in general not be $C^\infty$-close to $\lambda$, but only $C^1$. In other words, the Reeb vector field of $(\lambda,\omega)$ and $(\lambda',\omega')$ are not $C^\infty$-close as vector fields, but the 1-dimensional foliations they define are $C^\infty$-close. Hence, a suitable choice of vector fields defining these foliations are $C^\infty$-close vector fields.
 \end{Remark}
}

The main new ingredient behind the existence statements of Theorem~\ref{thm:mainBirk} is the concept of a {\it helix box}. The analogous theorems for Reeb vector fields of contact structures give a Birkhoff section and a broken book decomposition in the contact parts of a suitable SHS, after analyzing the behavior near the boundary of these parts. On the other hand, in the suspension part there is, by definition, a global section. The main difficulty lies in pasting these surfaces along the integrable parts that are of the form $T^2\times I$: we build local Birkhoff sections with boundary that allow pasting any  homology class in $T^2\times\{0\}$  with any homology class in $T^2\times\{1\}$. These local Birkhoff sections are what we call {\it helix boxes} and are constructed in \textsection\ref{sec:gluing}. 

\medskip

In \textsection\ref{sec:analytic}, we consider the set of SHS for which the function $f=d\lambda/\omega$ is real analytic and non-constant. Thus, the Reeb vector field admits a non-trivial real analytic first integral and  we can prove, without any non-degeneracy assumption, that they  admit a \emph{Birkhoff splitting}. 
Roughly speaking a Birkhoff splitting is defined as a connected Birkhoff section of the flow on the complement of a finite collection of invariant tori (see Definition~\ref{def:Bsplit}). Birkhoff splittings are constructed using a combination of our techniques and the analysis done in \cite{CV2}.

\subsection{Results on periodic orbits of Reeb vector fields}
We now introduce our results on the number of periodic orbits. Reeb vector fields of SHS might not have periodic orbits, but it is known that there is always one if the ambient manifold is not a torus bundle over the circle \cite{HT,Re}. We describe all the possible examples without periodic orbits:

\begin{thm}\label{thm:main1}
Let $X$ be an aperiodic Reeb vector field of a stable Hamiltonian structure $(\lambda,\omega)$ on a closed connected 3-manifold $M$. Then one of the following holds:
\begin{enumerate}
\item $M=T^3$ or a positive parabolic torus bundle over $\mS^1$, the Reeb vector field admits a torus global section and it is orbit equivalent to the suspension of an aperiodic symplectomorphism of the torus,
\item $M$ is a positive hyperbolic torus bundle over $\mS^1$, the Reeb vector field does not admit a global section, and after cutting along an invariant torus, it is orbit equivalent to the suspension of a 
pseudorotation of the annulus which is a rigid rotation near the boundary and has a quadratic irrational rotation number.
\end{enumerate}
If $(\lambda,\omega)$ is assumed to be analytic, then only the first case occurs.
\end{thm}

A pseudorotation of the annulus is an area-preserving map without periodic points, the definition is discussed in \textsection\ref{ssec-surfacedyn}. 

Under the contact non-degeneracy assumption, we can characterize as well those SHS whose Reeb vector field has only finitely many periodic orbits. The following theorem is an analog of the two or infinitely many periodic orbits theorem for non-degenerate contact Reeb vector fields proved in \cite{CDR} (see also \cite{HWZ, CgHP}).

{
\begin{thm}\label{thm:main2}
Let $M$ be a closed connected $3$-manifold, and $(\lambda,\omega)$ a contact non-degenerate SHS whose Reeb vector field has at least one periodic orbit. Then exactly one of the following holds:
\begin{itemize}
    \item[-] The Reeb vector field has infinitely many periodic orbits.
    \item[-] The Reeb vector field is orbit equivalent to the suspension of a symplectomorphism of a surface $\Sigma_g$ with finitely many periodic points.
    \item[-] The ambient manifold $M$ is the 3-sphere or a lens space, there are exactly two closed Reeb orbits and they are the core circles of a genus one Heegaard splitting of $M$.
\end{itemize}
Furthermore, the same conclusion holds for any  SHS that has a periodic Reeb orbit and belongs to a $C^2$-neighborhood of a contact non-degenerate SHS.
\end{thm}
}

We point out that the proof of Theorem~\ref{thm:main2} involves proving that an area preserving diffeomorphism of an oriented surface with boundary, without periodic points along the boundary, has infinitely many periodic points (confer Theorem~\ref{thm:surfirra}) except in the disk and the annulus. This proof is inspired by the ideas in \cite{Lec1}. It is known that symplectomorphisms of closed oriented surfaces with finitely many periodic points (and at least one) only exist in finite order isotopy classes of diffeomorphisms of a surface \cite[Theorems 1.2 and 1.3]{Lec1} (see also \cite{AZT}). Those isotopy classes characterize the surface bundles where contact non-degenerate SHS with finitely many periodic orbits do exist.

\subsection{Structure of the paper} We start presenting known facts about stable Hamiltonian structures and surface dynamics in \textsection\ref{sec:prelim}, in particular, we present the notion of structural decomposition of a SHS and Definition~\ref{def:contactND} of contact non-degenerate SHS. We then study aperiodic SHS in \textsection\ref{s:Aper} and the contact non-degenerate examples with finitely many periodic orbits in \textsection\ref{s:finite}, proving Theorems~\ref{thm:main1} and \ref{thm:main2} respectively. In \textsection\ref{s:SHSsections} we give the proof for the first two parts of Theorem~\ref{thm:mainBirk}, starting with the construction of helix boxes in \textsection\ref{sec:gluing} that establishes a very general result for Birkhoff sections in integrable regions. In \textsection\ref{sec:gen}, we explain why the last statement in Theorem~\ref{thm:mainBirk} holds, discuss different genericity results for the existence of Birkhoff sections and establish Theorem~\ref{thm:generic}. Finally, in \textsection\ref{sec:analytic}, we establish the existence of Birkhoff splittings for SHS whose Reeb vector field admits an analytic integral. 

\medskip

\textbf{Acknowledgements}: We are grateful to Patrice Le Calvez, for discussions concerning surface dynamics and many ideas to prove Theorem~\ref{thm:surfirra}, and to Francisco Torres de Lizaur for useful discussions. We also thank Pierre Dehornoy, for pointing out a detail of helix boxes that allowed us to simplify the statements of Theorems~\ref{thm:Bsection} and \ref{thm:bbook}. We thank the referees for all the corrections and comments that allowed us to improve the quality of this work. 

\section{Preliminaries}\label{sec:prelim}

In this section, we recall some necessary definitions and results that will be used throughout this work.

\subsection{Stable Hamiltonian structures}\label{ss:SHS}

As mentioned in the introduction, stable Hamiltonian structures generalize contact forms and are defined on closed odd-dimensional manifolds. In this work, we only consider the 3-dimensional case.

\begin{defi}
A stable Hamiltonian structure (SHS) on an oriented 3-manifold $M$ is a pair $(\lambda,\omega)$ where $\lambda \in \Omega^1(M)$ and $\omega \in \Omega^2(M)$ such that
$$\lambda\wedge \omega>0, \qquad d\omega=0 \qquad \mbox{and} \qquad \ker \omega \subset \ker d\lambda.$$
\end{defi}
It uniquely determines a Reeb vector field $X$ by the equations
$$ \lambda(X)=1,\qquad  \iota_X\omega=0.$$
 Observe that $X$ preserves the volume form $\lambda\wedge \omega$ and the plane field $\xi=\ker \lambda$, which is transverse to $X$. Only in dimension three, Reeb vector fields of SHS can be equivalently characterized as geodesible volume-preserving vector fields normalized to be of unit length \cite{Re}. The function $f=d\lambda/\omega$ is a first integral of the flow of $X$.

If the 2-form $d\lambda$ is never zero, $\lambda$ is a contact form and $X$ is modulo reparametrization the Reeb vector field $X_\lambda$, while if $d\lambda\equiv 0$ then $M$ fibers over the circle and each fiber is a section of $X$. This implies that the flow of $X$ is  orbit equivalent to
the suspension of an area-preserving diffeomorphism of a closed surface.

Given a  SHS $(\lambda,\omega)$ a 1-form $\widetilde{\lambda}$ is another stabilizing form of $\omega$ if $(\widetilde{\lambda}, \omega)$ is again a SHS. In this situation the Reeb vector field of $(\widetilde{\lambda},\omega)$ is a positive multiple of the Reeb vector field of $(\lambda,\omega)$.

{For a stable Hamiltonian structure $(\lambda,\omega)$ with proportionality function $f=\frac{d\lambda}{\omega}$, an \textit{integrable region} $D\subset M$ is a domain diffeomorphic to $T^2\times I$, where $I=[0,1]$, such that the Reeb vector field of $(\lambda,\omega)$ integrates to a subfoliation of the trivial foliation by tori in $D$. We say that $(\lambda,\omega)$ is $T^2$-invariant in coordinates $(x,y,t)$ of $T^2\times I$ when $\omega$ is of the form
\begin{equation}\label{eq-omegaonreg}
    \omega= h_1(t) dt\wedge dx + h_2(t) dt\wedge dy,
\end{equation}
and $\lambda$ of the form
\begin{equation}\label{eq-lambdaonreg}
\lambda = g_1(t) dx + g_2(t) dy + g_3(t)dt.
\end{equation}
We point out that whenever we work locally in some integrable region $T^2\times I$, up to slightly shrinking $I$ we might assume that $g_3(t)\equiv 0$ by \cite[Lemma 3.9]{CV1}. Whenever the size of $I$ does not need to be fixed, we will make this assumption without saying it, to lighten a bit the notation. We introduce now an important notion for stable Hamiltonian structures.
\begin{defi}\label{defn-strucdecom}
    A \emph{structural decomposition} of a SHS $(\lambda,\omega)$ is a decomposition of $M$ into a compact $3$-dimensional submanifold $N=N_c \sqcup N_0$ (possibly with boundary, possibly disconnected) of $M$, invariant under the Reeb flow; and a (possibly empty) disjoint union $U=U_1\sqcup ...\sqcup U_r$ of integrable regions such that
\begin{enumerate}
    \item $\operatorname{int}U \cup \operatorname{int}N=M$;
    \item On each component $N_{c,j}$ of $N_c$, the function $f$ is non-vanishing;
    \item On each component $N_{0,k}$ of $N_0$ there is a 1-form $\beta_k \in \Omega^1(N_{0,k})$ such that $\beta_k\wedge \omega|_{N_{0,k}}\neq 0$ and $d\beta_k=0$;
    \item for each $U_i\cong T^2\times I$, the set $U_i\setminus N$ is of the form $T^2\times (a,b)$ with $a>0$ and $b<1$;
    \item on each $U_i\cong T^2\times I$ there are coordinates $((x,y),t)$ for which the stable Hamiltonian structure $(\lambda,\omega)$ is $T^2$-invariant, moreover in $U_i\setminus N$ the torus fibers are regular level sets of $f=f(t)$.
\end{enumerate}
\end{defi}
}
Notice that a given SHS might admit several structural decompositions that can be very different.
We make a few observations on Definition~\ref{defn-strucdecom}. First, by condition (1), each boundary component of $N$ is contained in $U$ and hence belongs to an integrable region. This implies that inside $N$ each boundary component has a neighborhood that is foliated by 2-dimensional tori tangent to the Reeb vector field and in these neighborhoods $X$ is fiberwise linear. Condition (3) implies that $N_0$ fibers over $\mathbb{S}^1$ and that the flow of the Reeb vector field is orbit-equivalent to a suspension of a symplectomorphism of a surface (with boundary if $N_0\neq M$). The domain $N_c$ splits into two disjoint domains $N_+, N_-$ where $f=\frac{d\lambda}{\omega}$ is respectively positive and negative. In $N_+$ and $N_-$, the 1-form $\lambda$ is respectively a positive or a negative contact form and the Reeb vector field $X$ of the SHS is colinear with the Reeb vector field $X_\lambda$ of the contact form $\lambda$. We call $N_c$ the {\it contact region} of such a decomposition.

The $T^2$-invariance of $(\lambda, \omega)$ implies that in each connected component of $U$ with the coordinate system  in part (5), 
the Reeb vector field $X$ is a linear vector field  and we can write
\begin{equation}\label{eq-Xonregular}
    X=a_1(t)\frac{\partial}{\partial x}+a_2(t)\frac{\partial}{\partial y}.
 \end{equation}
 
{The theory developed in \cite{CV1}, concretely \cite[Theorem 3.3]{CV1} and the key \cite[Proposition 3.23]{CV1}, implies that any SHS admits a structural decomposition. 
\begin{theorem}\cite{CV1}
    Any SHS admits a structural decomposition.
\end{theorem}
We state here a version of \cite[Theorem 4.1]{CV1}, which proves that any stable Hamiltonian structure admits, up to changing the stabilizing 1-form by a $C^1$-perturbation, a very special structural decomposition.
\begin{theorem}[SHS structure theorem]\label{thm:struc}
Let $(\lambda,\omega)$ be a stable Hamiltonian structure on a closed 3-manifold $M$. Then there exists a stabilizing 1-form $\widetilde \lambda$ that is $C^1$-close to $\lambda$ such that $(\widetilde \lambda, \omega)$ admits a structural decomposition with the property that $\widetilde f$ is constant and non-vanishing in $N_c$ and equal to zero in $N_0$.
\end{theorem}
The fact that $\widetilde f$ is constant in $N$ is crucial to allow for perturbations of the SHS in $N$, and to prove that certain non-degenerate SHS are dense. The statement above is slightly different from \cite[Theorem 4.1]{CV1}, but follows from its proof. Indeed, the function $\widetilde f=\frac{d\widetilde \lambda}{\omega}$ constructed in the proof of \cite[Theorem 4.1]{CV1} has exactly $N$ as preimage of its singular values, and thus $\tilde f$ is regular (and its level sets are torus fibers) in $U_i\setminus N$. Observe that the 2-form $\omega$ has not changed, and thus the Reeb vector field of $(\widetilde \lambda,\omega)$ is just a reparametrization of the Reeb vector field of $(\lambda,\omega)$ by a function $C^1$-close to one.}

If one asks that the Reeb vector field of a SHS is  non-degenerate on $M$ (e.g. as in \cite{MS}) and $U$ is non-empty, the vector field has to be of constant irrational slope on each $U_i$. This is a very strong condition that might not be dense in some reasonable topology in the set of SHS. To apply results from \cite{CDR}, we need that in the contact region $N_c$ the Reeb vector field is non-degenerate, but we do not need the vector field to be non-degenerate everywhere. Hence we define

\begin{defi}\label{def:contactND}
A stable Hamiltonian structure $(\lambda,\omega)$ is contact non-degenerate if there exists a structural decomposition of $(\lambda,\omega)$ such that $\lambda$ is a non-degenerate contact form in $N_c$.
\end{defi}

Recall that a contact form is non-degenerate if its Reeb vector field is non-degenerate, and this property is satisfied by a dense set of contact forms in the  $C^\infty$-topology. In \cite[p 375]{CV1}, Cieliebak and Volkov introduce two different conditions of non-degeneracy for a Reeb vector field of a SHS. One is that of Morse non-degeneracy, meaning that the Reeb flow is everywhere non-degenerate. This definition is stronger than Definition~\ref{def:contactND}, since it requires non-degeneracy in all $M$, which is in general larger than just the contact region $N_c$ for a choice of $N$. The other non-degeneracy condition is that of Morse-Bott non-degeneracy in all $M$, an analog of the Morse-Bott non-degeneracy of contact forms. Being Morse-Bott is less strong than the contact non-degeneracy along a contact region (for some structural decomposition), but stronger than contact non-degeneracy in the complement of that contact region. Indeed, contact non-degeneracy does not assume anything away from a contact region of some structural decomposition. Both Morse-Bott non-degeneracy and contact non-degeneracy are dense conditions in the set of SHS in the $C^1$-topology.

If $(\lambda,\omega)$ is contact non-degenerate, the periodic orbits of its Reeb vector field $X$ in $N_c$ are either elliptic or hyperbolic, and near each connected component of $\partial N$ there is a foliation by invariant 2-tori in which $X$ is colinear to a linear vector field of irrational slope. In particular, the periodic orbits of $X$ are far away from $\partial N$. Observe also that if $\gamma$ is a hyperbolic periodic orbit in $N_c$, it has a stable and an unstable manifold denoted respectively by $W^u(\gamma)$ and $W^s(\gamma)$ which are contained in $N_c$.
{
\begin{defi}
A contact non-degenerate stable Hamiltonian structure $(\lambda,\omega)$ is contact strongly non-degenerate if the intersections between stable and unstable manifolds of the hyperbolic periodic orbits in $N_c$ are transverse.    
\end{defi}
}
 
  The arguments in \cite{CV1} can be adapted to show that contact strongly non-degenerate SHS are $C^1$-dense as well, see \textsection\ref{ss:density}.

\subsection{Torus bundles over $\mathbb{S}^1$}
The classification of torus bundles over $\mathbb{S}^1$ will play an important role in the proof of Theorem~\ref{thm:main1}. Let us recall the main properties that we will need. An (orientable) torus bundle over $\mathbb{S}^1$ is obtained by considering the mapping torus of an orientation-preserving diffeomorphism $\varphi:T^2\to T^2$ of a 2-torus. The isotopy class of $\varphi$ in the group of orientation preserving diffeomorphisms $\operatorname{Diff}(T^2)$ is determined by the action on the first homology group of $T^2$, which is given by an element $A\in SL(2,\mathbb{Z})$. 
The conjugacy class of $A$ in $SL(2,\mathbb{Z})$ defines the torus bundle up to homeomorphism. These classes are divided into three groups depending on the trace of $A$:
\begin{itemize}
\item[-] If $|\operatorname{tr}(A)|<2$, there are two conjugacy classes for each possible value of the trace: $-1, 0$ and $1$. The torus bundles obtained via these matrices are called \textit{elliptic} torus bundles. In this case, the matrix is not diagonalizable.
\item[-] If $|\operatorname{tr}(A)|=2$, there are two $\mathbb{Z}$-families of conjugacy classes given by matrices of the form
$$\begin{pmatrix}
1 & n \\
0 & 1
\end{pmatrix}, \quad \begin{pmatrix}
-1 & n \\
0 & -1
\end{pmatrix}, \text{ with } n\in \mathbb{Z}.$$
We call the torus bundles obtained using these matrices \textit{positive parabolic} and \textit{negative parabolic} torus bundles respectively. In this case, both eigenvectors have rational slope.
\item[-] If $|\operatorname{tr}(A)|>2$,  the matrix always has eigenvalues $\lambda$ and $\frac{1}{\lambda}$ for some $\lambda\neq 0$. The corresponding eigenvectors have irrational slope. We call the torus bundles obtained using a matrix in this class \textit{hyperbolic} torus bundles. We call it a positive or negative hyperbolic bundle if the trace of $A$ is positive or negative respectively.
\end{itemize}

\subsection{Surface dynamics}\label{ssec-surfacedyn}
Let us recall a few definitions that we will need concerning homeomorphisms of compact surfaces. For details, we refer to \cite{BCLP, Lec1}.

Let $f: \mathbb{A}\to \mathbb{A}$ be a homeomorphism of the closed annulus $\mathbb{A}=\mS^1\times I$ isotopic to the identity. Let $\widetilde{\mathbb{A}}=\mathbb{R}\times I$ be 
the universal cover of $\mathbb{A}$ and $\widetilde f: \widetilde{\mathbb{A}} \to \widetilde{\mathbb{A}}$ be a lift of $f$. Denote by $\pi_1: \widetilde{\mathbb{A}} \to \mathbb{R}$ the projection into the first factor. Given an $f$-invariant compactly supported Borel probability measure $\mu$, we define the rotation number of $\mu$ as
$$ \operatorname{rot}_{\widetilde f}(\mu)=\int_D (\pi_1\circ \widetilde f - \pi_1)d\mu, $$
where $D$ is any fundamental domain of the covering. The rotation set $\operatorname{rot}(\widetilde f)\subset \mathbb{R}$ is the set of rotation numbers for all invariant measures, and defines a compact interval. For two different lifts, the rotation set only differs by an integer number. 

A special class of maps of the annulus is the one formed by pseudorotations, which can be characterized by their rotation set:
a pseudorotation of the annulus is an area-preserving homeomorphism of $\mathbb{A}$ isotopic to the identity and whose rotation set reduces to a single irrational number.

One can define as well pseudorotations of the closed disk and the sphere. We give a definition, equivalent to the fact that the rotation set is a single irrational number, in terms of the fixed and periodic points:
\begin{defi}
    A pseudorotation is a homeomorphism with no wandering points of a surface that is isotopic to the identity map and such that:
    \begin{itemize}
        \item[-] if the surface is the annulus, it has no periodic points;
        \item[-] if the surface is the disk, it has one fixed point and no other periodic points;
        \item[-] if the surface is the sphere, it has two fixed points and no other periodic points.
    \end{itemize}
\end{defi}

Recall that if $f:X\to X$ is a homeomorphism of a topological space, a point $x\in X$ is {\it wandering} if there is an open set $U$ containing $x$ and a $N>0$ such that for all $n>N$ we have $f^n(U)\cap U=\emptyset$. Otherwise, $x$ is a {\it non-wandering} point. When $f$ preserves an area form and $X$ is compact, every point is non-wandering.  The following result combines two classical results of Franks \cite{F1, F2} on the existence of periodic orbits of surface homeomorphisms with no wandering points. We point out that Franks theorems are stated for homeomorphisms that are chain transitive, which is a stronger condition than having all points non-wandering (see \cite[Proposition 1.2]{F1}).

\begin{theorem}\label{thm:Fk}
    Let $F:\Sigma \rightarrow \Sigma$ be an orientation preserving homeomorphism with no wandering points, where $\Sigma$ is a sphere, a disk, or a closed annulus. Then either $F$ is a pseudorotation or it admits infinitely many periodic points. If the rotation set contains more than a point, there are periodic points with arbitrarily large periods in a compact subset $K$ in the interior of $\Sigma$.
\end{theorem}

{A pseudorotation of the disk has a fixed point, that can be blown up to obtain a pseudorotation of an annulus. Likewise, in the case of the sphere, there are two fixed points that can be blown up to obtain also a pseudorotation of an annulus. Thus the statement above reduces to the case of the annulus.}{ Let us justify why the periodic points of an arbitrarily large period lie in a compact set $K$ in the interior of the closed annulus $\Sigma$. The fact that the rotation set is not a single point allows us to choose an interval $[c,d]$ in the rotation set that has a non-empty interior and does not contain the rotation number of any of the boundary components of $\Sigma$. The periodic points of arbitrarily large periods can be chosen to have rotation numbers in $[c,d]$ by \cite[Corollary 2.4]{F1}. Then, by continuity of the rotation number, these orbits will all lie at a uniform positive distance from the boundary.} 

\medskip

Another class of homeomorphisms that we will use are the so-called Dehn twist maps, defined on closed surfaces of genus at least $2$.
\begin{defi}
Let $\Sigma$ be a closed surface of genus $g\geq 2$. A Dehn twist map is an orientation preserving homeomorphism $h:\Sigma \to \Sigma$ such that:
\begin{itemize}
\item[-] there is a finite family of pairwise disjoint invariant essential closed annuli $(A_i)_{i=1,...,k}$,
\item[-] no connected component of $\Sigma\setminus \bigcup_{i=1}^k A_i$ is an annulus,
\item[-] $h$ fixes every point in $\Sigma \setminus \bigcup_{i=1}^k A_i$,
\item[-] the map $h|_{A_i}$ is conjugated to $\tau^{n_i}$, where $n_i\neq 0$ and $\tau$ is a homeomorphism of $\mS^1\times [0,1]$ that lifts to $\widetilde \tau (x,y)=(x+y,y)$.
\end{itemize}
\end{defi}

\section{Characterization of aperiodic SHS}\label{s:Aper}

Given a stable Hamiltonian structure $(\lambda,\omega)$, it is immediate that if $f=\frac{d\lambda}{\omega}$ never vanishes, then $\lambda$ is a contact form and so $X$ cannot be aperiodic by Taubes' theorem \cite{Taub}. Hence, aperiodic examples only occur when $f$ vanishes somewhere and we start by analyzing the cases in which $f$ is not identically zero. In this section, we provide a dynamical and topological characterization of counterexamples to the Weinstein conjecture for SHS, proving Theorem~\ref{thm:main1}. As an intermediate step, we study contact Reeb dynamics in the 3-manifold with boundary $N_c$ associated to a structural decomposition as in Definition~\ref{defn-strucdecom} of a SHS $(\lambda,\omega)$. 


\subsection{{Reeb vector fields of a SHS in $N_c$} 
}
Denote by $I$ the interval $[0,1]$ and by $\mathbb{A}=\mS^1\times I$ the closed annulus. 
In order to study Reeb vector fields of SHS for which $f$ is not identically zero, we start by

\begin{lemma}\label{lem:reebpseudo}
Let $M$ be a closed oriented 3-manifold endowed with $(\lambda,\omega)$  a contact non-degenerate SHS with respect to a structural decomposition such that $N_c\neq M$. If the Reeb vector field $X$ of $(\lambda, \omega)$  has finitely many periodic orbits in $N_c$,  then either $N_c\cong T^2\times I$ and $X$ is orbit equivalent to the suspension of a pseudorotation of the annulus, or $N_c\cong \mS^1\times D^2$ and $X$ is orbit equivalent to the suspension of a pseudorotation of the disk.
\end{lemma}

\begin{proof}
 Since near the boundary of $N_c$ the SHS $(\lambda, \omega)$ is $T^2$-invariant (see \textsection\ref{ss:SHS}), we have that each connected component of $\partial N_c$ admits a neighborhood $V\cong T^2\times [0,\delta]$ with coordinates $(x,y,t)$ so that
\begin{equation}\label{eq:reebinv}
X=a_1(t) \pp{}{x} + a_2(t) \pp{}{y}. 
\end{equation}
{Observe that since we assume that $X$ has finitely many periodic orbits in $N_c$, we have that $a_2(t)/a_1(t)$  is a constant irrational number. The arguments in \cite[Section 5.2]{HT} imply that the contact form $\lambda$} can be assumed to be
\begin{equation}\label{eq:contform}
 \lambda= \frac{1}{2} \rho(t)^2 (a_2dx-a_1dy) + \alpha_2 dy, 
\end{equation}
 where $\rho(t)^2$ is a smooth function on $[0,\delta]$ and $\rho(t)$ is smooth and strictly increasing on $(0,\delta]$. We can collapse the boundary torus to a circle by identifying those points that have the same $x$ coordinate along the torus $t=0$. Then $(\rho,x)$ become polar coordinates on the disk and $y$ is the coordinate of the $S^1$ direction. 
 
 Doing this along each component of the boundary we obtain a closed 3-manifold $N_1$, and $\lambda$ becomes a smooth contact form $\lambda_1$ on $N_1$ just by using the expression \eqref{eq:contform} along each solid torus $V_1$ obtained from $V$. Let $\sigma_i$ be the circles that we constructed by collapsing the boundary tori. The Reeb vector field $X_{\lambda_1}$ of $\lambda_1$ in $N_1\setminus \{\sigma_i\}$  coincides with $X$ in the interior of $N_c$. 

The Reeb vector field $X_{\lambda_1}$ is non-degenerate and has finitely many periodic orbits, by hypothesis. Then \cite[Theorem 1.2]{CDR} implies that the manifold $N_1$ is a sphere or a lens space and $X_{\lambda_1}$ has exactly two periodic orbits that we denote by $\gamma_1$ and $\gamma_2$.  By \cite[Theorem 1.2]{HT}, the periodic orbits are the core circles of the solid tori of a genus one Heegaard splitting of $N_1$. This implies that either $N_c\cong T^2\times I$ or $N_c \cong \mS^1 \times D^2$. Finally, by \cite[Corollary 1.10]{CGHHL}, each closed Reeb orbit of $X_{\lambda_1}$ bounds a disk-like Birkhoff section (one could alternatively find an annulus-like Birkhoff section induced by the projection of holomorphic cylinders, see \cite[Section 4.6]{HT}). 

At least one of these two periodic orbits coincides with a collapsed boundary component of $N_c$. Assume without loss of generality that $\gamma_2$ corresponds to a collapsed boundary component. Consider the disk-like section bounded by $\gamma_2$ and denote the first-return map by $h$. The map $h$ is smooth since there is an invariant foliation by circles near the boundary. Moreover, $h$ is conjugated to a rigid rotation in each such circle. Since $h$ has finitely many periodic points, Theorem~\ref{thm:Fk} implies that $h$ is conjugated to a pseudorotation of the disk.

If $N_c\cong \mS^1\times D^2$, then $X_{\lambda_1}$ is tangent to a foliation by invariant tori near the boundary that corresponds to the circle $\gamma_2$. We can restrict $h$ to a disk $D'$ whose boundary is an invariant circle close enough to $\gamma_2$, and hence $X_{\lambda_1}$ is orbit equivalent to the suspension of $h|_{D'}$, a pseudorotation of the disk, and we conclude in this case. 

In the other case, the original manifold is $N_c\cong T^2\times I$, which means that near the boundary $\gamma_2$ and near the fixed point of $h$ there is a foliation by invariant circles of $h$. Along those circles, the diffeomorphism is conjugated to a rigid rotation. By restricting $h$ to some annulus $\mathbb{A}$ bounded by two invariant circles, one close enough to the fixed point and one close enough to $\gamma_2$, we obtain an area-preserving diffeomorphism $h|_\mathbb{A}$
of the closed annulus.  It follows that $X_{\lambda_1}$ is orbit equivalent to the suspension of $h$, and the same holds for $X$ in $N_c$. We conclude that $X$ is orbit equivalent to the suspension of a pseudorotation of the annulus.
\end{proof}

\subsection{A global section up to cutting along a torus}

In this section, we prove that aperiodic SHS always admit a global section if one allows to cut open the ambient manifold along an invariant torus. A key fact is that the suspension of a pseudorotation of an annulus $\mathbb{A}$ that can be extended in a particular way to a homeomorphism of the torus admits global sections inducing any homology class on the boundary. 

 Following \cite{K1} we recall some definitions needed to apply Fried's theory of global sections \cite{Fr}. Let $g$ be a diffeomorphism of the torus $T^2$, that we assume to be in the path-connected component of the identity. Given a lift $\widetilde g:\mathbb{R}^2 \rightarrow \mathbb{R}^2$ of $g$, the rotation vector of $\widetilde g$ at $\widetilde p \in \mathbb{R}^2$ is defined as
 $$\rho(\widetilde p, \widetilde g)=\lim_{n\rightarrow \infty} \frac{\widetilde g^n(\widetilde p)-\widetilde p}{n},$$
 whenever the limit is defined. The rotation vector at a point for two different lifts of $g$ differs by a fixed vector in $\mathbb{Z}^2$. For a fixed lift, the rotation set $\rho(\widetilde g)$ is the set of rotation vectors obtained at any point in $\mathbb{R}^2$. We call a diffeomorphism $g$ a pseudorotation of the torus if the rotation set of $\widetilde g$ is a unique vector whose coordinates are independent over $\mathbb{Q}$, notice that this does not depend on the choice of the lift.
 
 If we suspend $g$ into a flow of $T^3$ by choosing an isotopy generating $g$, we can assign rotation vectors to such an isotopy as follows. The resulting flow $\phi: \mathbb{R}\times T^3\rightarrow T^3$ lifts to a flow $\widetilde \phi: \mathbb{R}\times \mathbb{R}^3\rightarrow \mathbb{R}^3$. The rotation vector of a point $p\in T^3$ is 
$$ \rho(p,\phi)=\lim_{t\rightarrow \infty}\frac{\widetilde \phi_t(\widetilde p)-\widetilde p}{t}, $$
where $\widetilde p$ is any lift of $p$. The homological direction is the projectivization $[\rho(p,\phi)]$, i.e. the equivalence class of the vector after identifying vectors that are strictly positive multiples of each other. We denote by $\rho(\phi)$ and $[\rho(\phi)]$ the set of rotation vectors and of homological directions of $\phi$.

In the following statement, we consider homology classes of global sections up to sign.
\begin{prop}\label{prop:sections}
 Let $h: \mathbb{A} \rightarrow \mathbb{A}$ be a smooth pseudorotation of the annulus that extends to a smooth pseudorotation of the torus. For any homology class $\sigma\in H_1(T^2,\mathbb{Z})$, the suspension flow of $h$ admits a global section $\Sigma$ such that $ \Sigma \cap \partial (T^2\times I)$ induces in each boundary torus a circle of homology class $\sigma$.
\end{prop}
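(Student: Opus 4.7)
The plan is to extend the flow on $T^2\times I$ to a closed flow on $T^3$ via the given pseudorotation $\tilde h:T^2\to T^2$, apply Fried's theory of cross sections to that closed flow, and then restrict. Let $\phi$ denote the suspension flow of $\tilde h$. Since $\tilde h$ is isotopic to the identity, its mapping torus is diffeomorphic to $T^3$, and one can choose coordinates $(x,y,t)$ on $T^3$ so that the annulus $\mathbb{A}\subset T^2$ corresponds to $\{0\le y\le y_0\}\subset T^2$ and $t$ is the suspension direction. In this description, $T^2\times I$ sits as a flow-invariant submanifold of $T^3$ (invariance coming from $h(\mathbb{A})=\mathbb{A}$, taking the isotopy defining the suspension to preserve $\mathbb{A}$), with boundary tori $T_0=\{y=0\}$ and $T_{y_0}=\{y=y_0\}$, and the flow on $T^2\times I$ is orbit equivalent to the suspension of $h$.

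Because $\tilde h$ is a pseudorotation, every $\phi$-orbit has the same rotation vector $\rho=(\alpha,\beta,1)\in H_1(T^3,\mathbb{R})$, where $\alpha$ is the (irrational) rotation number of $h$. By Fried's theorem, a class $a[dx]+b[dy]+c[dt]\in H^1(T^3,\mathbb{Z})$ is Poincaré dual to a global cross section of $\phi$ if and only if $a\alpha+b\beta+c>0$. Write $\sigma=p\,e_x+q\,e_t\in H_1(T^2,\mathbb{Z})$ in the natural basis on a boundary torus. The restriction map $H^1(T^3)\to H^1(T_0)$ kills $[dy]$, and Poincaré duality on $T_0$ sends $a[dx]+c[dt]$ to $c\,e_x-a\,e_t$. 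Consequently, the intersection of any such cross section with $T_0$ (and similarly with $T_{y_0}$) carries homology class $c\,e_x-a\,e_t$, and choosing $(a,c)=\epsilon(-q,p)$ for some $\epsilon\in\{\pm 1\}$ yields boundary class $\epsilon\sigma$, matching $\sigma$ up to sign.

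It remains to pick $b$ (and $\epsilon$) so that $\epsilon(p-q\alpha)+b\beta>0$. If $\beta\neq 0$, any integer $b$ with $|b|$ large enough and the appropriate sign suffices, for either $\epsilon$. If $\beta=0$, irrationality of $\alpha$ forces $p-q\alpha\neq 0$ whenever $(p,q)\neq 0$, so the choice $\epsilon=\operatorname{sign}(p-q\alpha)$ does the job. Let $\Sigma'\subset T^3$ be the resulting Fried cross section and define $\Sigma=\Sigma'\cap(T^2\times I)$. Flow-invariance of $T^2\times I$ ensures that every orbit of the suspension of $h$ is an orbit of $\phi$, hence hits $\Sigma'$, and therefore $\Sigma$, in bounded time. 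On each boundary torus the induced flow is minimal (since $\alpha$ is irrational), and $\partial\Sigma$ is a closed transverse $1$-submanifold of nonzero homology class for $\sigma\neq 0$, so it cuts every boundary orbit as well. Hence $\Sigma$ is a global cross section of the suspension of $h$ whose boundary realizes the class $\sigma$ up to sign.

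The main difficulty is to coordinate the globally prescribed Poincaré dual of a Fried cross section on $T^3$ with the prescribed boundary class $\sigma$ while simultaneously meeting Fried's positivity inequality: the choice of $(a,c)$ is forced by $\sigma$, leaving only $b$ and a sign to adjust. The degenerate case $\beta=0$ is saved by the irrationality of $\alpha$ together with the ``up to sign'' flexibility in the statement; the case $\sigma=0$ can be handled separately by selecting $(a,c)=(0,0)$ and any $b$ satisfying $b\beta>0$ (which requires $\beta\neq 0$, a mild condition that can be arranged for the extension $\tilde h$).
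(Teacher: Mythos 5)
Your proposal takes essentially the same route as the paper: extend $h$ to a pseudorotation of $T^2$, suspend to get a flow on $T^3$, compute the rotation direction, apply Fried's cross-section criterion to realize a prescribed Poincar\'e dual class, and restrict the resulting section to the invariant region $T^2\times I$. The computation of the pairing and the transversality/boundedness arguments are correct, and the observation that the $(a,c)$ coefficients are forced by $\sigma$ while $[dy]$ restricts trivially to the boundary tori is the right bookkeeping.

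One small correction: you present $\beta$ (the $y$-component of the rotation vector of the torus extension) as adjustable and treat $\beta\neq 0$ as the generic case, even suggesting at the end that one could ``arrange $\beta\neq 0$.'' That is not possible. Since $h$ preserves each boundary circle of $\mathbb{A}$ and is an irrational pseudorotation, and the extension is assumed to be a pseudorotation (so its rotation set is a singleton), the $y$-component of the rotation vector is already determined by its restriction to the invariant annulus and must vanish. The paper exploits this directly: it fixes $\rho(\tilde H)=(\alpha,0)$, takes the dual class $[-q\,dx+p\,dt]$ with no $dy$-term, and relies on the irrationality of $\alpha$ to get the nonvanishing of $\epsilon(p-q\alpha)$ for $(p,q)\neq(0,0)$. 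Your $\beta=0$ branch is exactly the paper's argument, so the proof goes through; just drop the $\beta\neq 0$ discussion, and note that the case $\sigma=0$ should be excluded (it plays no role, and the proposition is used in the paper only for primitive classes), since for $\beta=0$ there is no way to satisfy Fried's positivity with $(a,c)=(0,0)$, consistent with the fact that a section must meet the irrational boundary flow.
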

{Notice that the suspension flow of a homeomorphism isotopic to the identity depends on the choice of isotopy generating the homeomorphism. However, the (smooth) orbit equivalence class of the flow is determined by the homeomorphism. We thus speak of ``the suspension" understood as this equivalence class.}
\begin{proof}
{Identify $T^2\cong \mathbb{R}^2/\mathbb{Z}^2$ and the annulus $\mathbb{A}$}  with the image of  $[0,1]\times [0,1/2]$ in the quotient $\mathbb{R}^2/\mathbb{Z}^2$. Let $H: T^2 \to T^2$
be an extension of $h$ to a pseudorotation of the torus. In particular, $H$ is isotopic to the identity, and for any lift of $H$ to $\mathbb{R}^2$, the rotation vector is of the form $(\alpha,n)$ for an irrational number $\alpha$ and $n\in \mathbb{Z}$. Fix a lift $\widetilde H$ such that $\rho(\widetilde H)=(\alpha,0)$, for an irrational number $\alpha$.

Let $\phi$ be the flow obtained by the suspension of $H$, defined in the 3-torus $T^3=\mathbb{R}^3/\mathbb{Z}^3=T^2\times S^1$ with coordinates $(x,y,t)$. 
Denote by $[\rho(\phi)]$ the set of homological directions of $\phi$, that is 
$$[\rho(\phi)]=\{ [\rho(p,\phi)]\enspace | \enspace p\in T^3\}.$$
 In the case of a suspension of a diffeomorphism of the torus whose rotation set is a single vector, the choice of $\widetilde H$ above implies that (we refer to \cite[Equation 2.2]{K1})
$$ [\rho(\phi)]=\{[(\alpha,0,1)]\}.$$
 Fried's Theorem \cite[Theorem D]{Fr} shows that there is a global section $\Sigma$ of $\phi$ inducing the Poincar\'e dual class $[-qdx+pdt]_{H^1(T^3;\mathbb{Z})}$ 
 as long as $-qdx+pdt$ is positive 
 when evaluated on the set of homological directions of $\phi$. 

{Since $[\rho(\phi)]=[(\alpha,0,1)]$}, given any pair of integers $(p,q)\neq (0,0)$, the cohomology class $[-qdx+pdt]$ does not vanish when evaluated at $[\rho(\phi)]$. If the value is positive, Fried's theorem implies that there is a global section $\Sigma$ of $\phi$ inducing the Poincar\'e dual class $[-qdx+pdt]_{H^1({T}^3;\mathbb{Z})}$; if the value is negative we use $[qdx-pdt]_{H^1(T^3;\mathbb{Z})}$. 

Now recall that the annulus $S^1\times [0,1/2]$ is invariant under $H$, and $\phi$ restricted to $S^1\times[0,1/2]\times S^1$ corresponds to a suspension of $h$. The surface $\Sigma$ restricts as an annulus-like section of the suspension of $h$ with associated class $[qdx-pdt]_{H^1(T^2\times I)}$. Since $(p,q)$ was arbitrary, we can find a global section with a prescribed homology class of $\Sigma\cap \partial (S^1\times [0,1/2] \times S^1)$.
\end{proof}

\begin{Remark}
 The previous proposition holds for the suspension of any pseudorotation of the annulus as long as Fried's theorem holds on compact manifolds with boundary. A different approach to try to prove this fact is using \cite[Proposition 5.1]{BCLP}. It follows from it that any pseudorotation is smoothly conjugate to a diffeomorphism that is arbitrarily $C^0$-close to a rigid rotation. If this diffeomorphism is connected to the rigid rotation by a $C^0$-small isotopy, a fact that is not implied by the $C^0$-closeness of the diffeomorphisms, then one can deduce the previous proposition for any pseudorotation of the annulus.
\end{Remark}

We now use the previous proposition to obtain a global section, up to cutting open $M$ along an invariant torus, for any aperiodic SHS Reeb vector field.

\begin{theorem}\label{thm:SHSannulus}
Let $X$ be the Reeb vector field of a stable Hamiltonian structure $(\lambda,\omega)$ without periodic orbits on a closed oriented 3-manifold $M$. If $f=\frac{d\lambda}{\omega}$ is non-constant, then given any torus $T\subset M$ contained in a regular level set of $f$, cutting $M$ along $T$ yields a manifold with boundary $\widetilde M$ which is diffeomorphic to $T^2\times I$ and in which the Reeb vector field admits an annulus-like section. Moreover, the vector field is orbit equivalent to the suspension of a pseudorotation of the annulus.
\end{theorem}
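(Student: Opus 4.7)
The plan is to decompose $M$ via the SHS structure theorem, classify the Reeb dynamics on each piece using aperiodicity, observe that every piece is diffeomorphic to $T^2\times I$ with $X$ orbit equivalent to a suspension on it, and finally glue compatible annulus-like cross sections across the invariant tori.

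First I would invoke Theorem~\ref{thm:struc} to write $M=N\cup U$ with $U=U_1\sqcup\cdots\sqcup U_k$ and $N=N_+\cup N_-\cup N_0$. On each $U_i\cong T^2\times I$ the Reeb field is tangent to the slice tori and linear on them, so aperiodicity forces the slope to be irrational on every slice; then for any rational-slope circle $\gamma\subset T^2\times\{0\}$, the annulus $\gamma\times I$ is transverse to $X$ and meets every orbit, with arbitrary prescribed homology class on its boundary. On each component of $N_\pm$, the form $\lambda$ is contact and $T^2$-invariant near the boundary, and $X$ is aperiodic (hence vacuously non-degenerate with finitely many periodic orbits), so Lemma~\ref{lem:reebpseudo} applies; the disk alternative is ruled out because every irrational pseudorotation of the disk has a fixed point, and so the component is $T^2\times I$ with $X$ orbit equivalent to the suspension of an irrational pseudorotation of the annulus. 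Each component of $N_0$ fibers over $\mS^1$ with transverse fibers whose boundary circles live on invariant tori from $\partial U$ on which the Reeb flow is a rigid irrational rotation; a surface-dynamics argument then forces the fiber to be an annulus (higher-genus area-preserving maps with no periodic points along the boundary fail to exist), so this block too is $T^2\times I$ with $X$ a suspension of an annulus map.

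Since every block is $T^2\times I$ attached to its neighbors along invariant tori, cutting $M$ along any $T$ in a level set of $f$ produces a concatenation of $T^2\times I$ blocks, hence $\widetilde M\cong T^2\times I$. To upgrade this to the dynamical statement I would construct in each block an annulus-like cross section realizing a prescribed boundary homology class, and concatenate them. In $U_i$ this is immediate as above; in the suspension blocks I would apply Proposition~\ref{prop:sections} after noting that the rigid-rotation behavior on each invariant boundary circle provides the boundary data needed to extend the annulus pseudorotation to a torus pseudorotation in the required sense. Matching the prescribed classes along each interior invariant torus, the local pieces glue to a global annulus-like section $\Sigma\subset\widetilde M$. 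The first-return map $P\colon\operatorname{int}\Sigma\to\operatorname{int}\Sigma$ preserves the area induced by $\lambda\wedge\omega$ and has no periodic points, so by Franks' theorem it is an irrational pseudorotation, and $X$ on $\widetilde M$ is orbit equivalent to its suspension.

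The main obstacle I foresee is the extension step in Proposition~\ref{prop:sections}: Lemma~\ref{lem:reebpseudo} yields a pseudorotation of the annulus that is only immediately known to extend to the disk, so one must argue that the rigid-rotation behavior on the invariant boundary circles (possibly with unequal rotation numbers on the two sides of the annulus) allows a torus extension in the correct isotopy class and with a compatible rotation vector, or else bypass the proposition entirely by verifying Fried's cohomological positivity criterion on the rotation set of the suspension flow directly.
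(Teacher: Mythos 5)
Your proposal follows essentially the same route as the paper: apply the structure theorem, show each block of $N_\pm$, $N_0$, $U_i$ is a $T^2\times I$ carrying a suspension of an irrational pseudorotation (via Lemma~\ref{lem:reebpseudo} for the contact blocks and a Tischler-type argument for $N_0$), find annulus-like sections with prescribed boundary homology class in each block via Proposition~\ref{prop:sections}, and concatenate. A few remarks on your flagged obstacle: the extension of the annulus pseudorotation to a torus pseudorotation is not actually problematic, because an irrational pseudorotation of the annulus has a single rotation number (its rotation set is a point), so the rigid-rotation behavior near both boundary circles has the same rotation number and the map extends to $T^2$ by that rigid rotation outside the annulus; the resulting torus map then has rotation set $\{(\alpha,0)\}$ as required. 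You also correctly make explicit (and the paper leaves implicit) why the disk alternative of Lemma~\ref{lem:reebpseudo} is excluded, and you invoke a surface-dynamics argument (in the spirit of Theorem~\ref{thm:surfirra}) for $N_0$ where the paper instead cites \cite{Re,HT}; the latter is not circular but does forward-reference material organized later in the paper. Finally, note that aperiodicity forces the slope of $X$ in each $U_i$ to be a \emph{constant} irrational (a non-constant continuous slope function would hit a rational by the intermediate value theorem), which is slightly stronger than what you wrote but does not affect your transversality conclusion.
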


\begin{proof}
Take a structural decomposition $U=U_1\sqcup \ldots \sqcup U_k$ and $N= N_c \sqcup N_0$ of $(\lambda,\omega)$, as in Theorem~\ref{thm:struc}. By Taubes' theorem \cite{Taub}, $X$ is not the Reeb vector field of a contact sturcture and hence $U\neq \emptyset$. Let $V=\overline{U\setminus N}$ and, to simplify, choose a torus $T$ to be a connected component of $\partial V$. Then $T$ is contained in a regular level of $f$, is invariant under the Reeb flow and $T\subset \text{int}(U)$. Recall that each $U_i$ is an integrable region diffeomorphic to $T^2\times I$ where $(\lambda,\omega)$ can be assumed to be $T^2$-invariant.

Denote by $\widetilde{M}$ the compact manifold obtained by cutting open $M$ along $T$, whose boundary is given by two copies of $T$ that we denote by $T'$, $T''$. Notice that $T'$ and $T''$ both admit a neighborhood of the form $T^2\times I$ with a coordinate system for which $(\lambda,\omega)$ is $T^2$-invariant. The manifold $\widetilde M$ is also decomposed in the regions $V$ and $N$ (for which we use the same notation). Observe that in $\widetilde M$, $T'$ belongs either to $V$ or $N$, we assume that $T'\subset V$ and hence $T''\subset N$.

 Let us show that we can apply Proposition \ref{prop:BirkT2} to each connected component of $U$ (or $V$) and of $N$. We start by studying the integrable regions. Since the Reeb vector field has no periodic orbits, it has constant irrational slope in each integrable region $U_i$. In particular, the Reeb vector field in $U_i$ is orbit equivalent to the suspension of a rigid irrational rotation of the annulus that trivially extends to a smooth pseudorotation of the torus. 

 In each component of $N_0$ the Reeb vector field has a global section, and it is then orbit equivalent to the suspension of an area-preserving diffeomorphism of a compact surface without periodic orbits. This implies that each component of $N_0$ is diffeomorphic to $T^2\times I$ and that the Reeb vector field  is orbit equivalent to the suspension of a pseudorotation $h$ of the annulus, see \cite[page 20]{Re} or \cite[Section 5.3]{HT}. Observe that near the boundary of each connected component of  $N_0$ we have a family of invariant tori, implying that the pseudorotation of the annulus is a rigid irrational rotation near each boundary component of the annulus. Moreover, the rotation number of $h$ at each boundary has to be the same. This implies that $h$ can be extended to a pseudorotation of a torus, as in the hypothesis of Proposition~\ref{prop:sections}.
 
 In each connected component $N'$ of $N_c$, Lemma~\ref{lem:reebpseudo} implies that each component is diffeomorphic to $T^2\times I$ or $\mathbb{S}^1\times D^2$. Since $\widetilde M$ has two boundary components, we only have the first option. Moreover, the Reeb flow in $N'$ is orbit equivalent to the suspension of a pseudorotation $h$ of the annulus. Since the boundary components of $N'$ admit a neighborhood in $N'$ contained in $U$, the flow near $\partial N'$ is tangent to a foliation by tori and thus $h$ is a rigid rotation near the boundary. In particular, the diffeomorphism $h$ extends also in this case to a smooth pseudorotation of a torus, as in the hypothesis of Proposition~\ref{prop:sections}.

Consider now the regions $V_i=\overline{U_i\setminus N}\subset U_i$, that are each diffeomorphic to $T^2\times I$ and the restricted Reeb flow is orbit equivalent to the suspension of an irrational rotation of the annulus. We have thus shown that $\widetilde{M}$ is obtained by gluing along their boundary a finite number of connected domains each diffeomorphic to $T^2\times I$ where $X$ is orbit equivalent to the suspension of a pseudorotation (that always extends to a pseudorotation of the torus). In particular we have $\widetilde{M}\cong T^2\times I$.

\smallskip

{We proceed to prove that $X$ admits a global section in $\widetilde M$.} Consider the decomposition above $N\cup V$ and recall that $T'\subset V$. We construct the section starting in the connected component $V_1\subset V$ that contains $T'$. In $V_1$ the Reeb vector field is orbit equivalent to the suspension of an irrational rotation, that extends trivially to a rotation of a torus. By Proposition~\ref{prop:sections}, we can choose any non-trivial homology class $\sigma_1 \in H_1(\partial V_1; \mathbb{Z})$ and find an annulus-like section $\Sigma_1$ of $X|_{V_1}$ inducing the homology class $\sigma_1$ in each boundary component. The boundary component $T_1=\partial V_1\setminus T'$ is glued to a boundary torus of a connected component $N_1\subset N$, with $N_1\cong T^2\times I$. Denote this boundary torus by $T_1'\subset \partial N_1$. The surface $\Sigma_1$ induces on $T_1'$ a circle with homology class $\sigma_1'$ (the class $\sigma_1$ understood in $H_1(\partial N_1;\mathbb{Z})$). Applying Proposition~\ref{prop:sections}, there is an annulus-like section $\Sigma_1'$ of $X|_{N_1}$ in $N_1$ inducing circles with homology class $\sigma_1'$ in each boundary component of $N_1$. 

We now paste together $\Sigma_1$ and $\Sigma_1'$ to obtain a section of $X|_{V_1\cup N_1}$. Denote by $\gamma_1$ and $\gamma_1'$ the circles $\Sigma_1\cap T_1'$ and $\Sigma_1'\cap T_1'$, and choose an orientation for $T_1'$.  Up to a small $C^\infty$ perturbation of $\Sigma_1'$, we might assume that $\gamma_1$ and $\gamma_1'$ intersect transversely. Each curve comes equipped with an orientation, induced respectively by the orientations of $\Sigma_1$ and $\Sigma_1'$ inherited by the positive direction of the vector field $X$. Since $[\gamma_1]=[\gamma_1']$ in $H_1(T_1';\mathbb{Z})$, there is some oriented $2$-chain $C$ such that $\partial C=\gamma_1-\gamma_1'$. 

Assume that $\gamma_1\cap \gamma_1' \neq \emptyset$. The fact that $\gamma_1$ and $\gamma_1'$ intersect transversely implies that $C$ is given by a finite collection of disks. The disk interiors are two by two disjoint and the boundary of each disk has two corners. Notice that $T_1'$ is oriented and $C$ is oriented, hence we can distinguish two types of disks in $C$: the positive ones where both orientations coincide and the negative ones where the two orientations are opposite to each other. 
Let $D$ be one of the disks of negative sign. In a small neighborhood $\mathcal{V}$ of this disk, the curves $\gamma_1$ and $\gamma_1'$ intersect at exactly two points. Since the flow is transverse to $\gamma_1$ and $\gamma_1'$, assume first that in $D$ the orbits of the flow go from $\gamma_1$ to $\gamma_1'$. We can deform $\gamma_1$ inside $\mathcal{V}$ in the direction of the flow, so that $\gamma_1$ and $\gamma_1'$ no longer intersect in $\mathcal{V}$ and both are still transverse to the flow. If, contrary to the previous assumption, in $D$ the orbits of the flow go from $\gamma_1'$ to $\gamma_1$, we use the negative flow to deform $\gamma_1$. Doing this on all the negative disks,  
we can ensure that $\gamma_1$ and $\gamma_1'$ are disjoint. Since these curves are in the same homology class, they bound a cylinder. 

Let $\varphi_t$ denote the flow of $X$, we can find a continuous positive function $g: \gamma_1 \to \mathbb{R}$, which never zero, such $\varphi_{g(p)}(\gamma_1)=\gamma_1'$. Consider a small  $T^2$-invariant neighborhood $W=T^2\times [-\delta,\delta]$ of $T_1'$ in $\widetilde M$, where $X$ is just an irrational vector field of constant slope on each torus. Let $t$ be the coordinate on $[-\delta, \delta]$. We take $W$ so that $V_1\cap W= \{t\leq 0\}$ and $N_1\cap W=\{t\geq 0\}$. We might assume that $\Sigma_1\cap \{t=-\delta\}=\gamma_1$ and $\Sigma_1'\cap \{t=\delta\}=\gamma_1'$ (where we have abused notation by taking the translated curves in any of the tori). Now define surface $\Sigma$ which is equal to $\Sigma_1$ in $V_1\setminus W$, equal to $\Sigma_1'$ in $N_1\setminus W$ and such that in $W$ it is given by $(\varphi_{h(t,p)}(\gamma_1),t)$ for a continuous function $h(t,p)$ which is equal to $0$ near $t=-\delta$ and equal to $g(p)$ near $t=\delta$. Then $\Sigma$ is an annulus-like surface of section of the Reeb vector field in $V_1\cup N_1$. 

We can apply this argument iteratively by gluing the connected components of $V$ and $N$, showing that the Reeb vector field admits an annulus-like surface of section in  $\widetilde{M} \cong T^2\times I$ and hence that it is orbit equivalent to the suspension of a pseudorotation of the annulus.
\end{proof}

\subsection{Admissible torus bundles}

In this section we analyze aperiodic Reeb vector fields of SHS for which $f=\frac{d\lambda}{\omega}$ is not constant and deduce obstructions on the topology of the ambient $T^2$-bundle by applying Theorem~\ref{thm:SHSannulus}.

\begin{prop}\label{prop:bundles}
Let $(\lambda, \omega)$ be a SHS defining an aperiodic Reeb vector field such that $f=\frac{d\lambda}{\omega}$ is not constant. Then $M\cong T^3$ or $M$ is a positive hyperbolic torus bundle over $\mS^1$.
\end{prop}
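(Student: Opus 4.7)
The plan is to apply Theorem~\ref{thm:SHSannulus} to realize $M$ as a torus bundle over $\mS^1$ and then to read off from the dynamics an algebraic condition on the monodromy that pins down its conjugacy class in $SL(2,\mathbb{Z})$.

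Since $f$ is non-constant, the structure theorem (Theorem~\ref{thm:struc}) produces invariant tori in $M$: for instance, any torus on the boundary between two components of the decomposition into integrable and contact parts is invariant and sits inside a level set of $f$. I would pick such a torus $T$ and apply Theorem~\ref{thm:SHSannulus}: the cut-open manifold $\widetilde{M}$ is diffeomorphic to $T^2\times I$, and the Reeb flow on $\widetilde{M}$ is orbit equivalent to the suspension of an irrational pseudorotation $h$ of $\mathbb{A}$ with some rotation number $\rho\in\mathbb{R}\setminus\mathbb{Q}$. This realization automatically identifies $M$ with a torus bundle over $\mS^1$, whose monodromy $A\in SL(2,\mathbb{Z})$ is defined up to conjugacy.

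The next step is to extract the key constraint on $A$ from the dynamics. Near each boundary of $\mathbb{A}$, $h$ is conjugate to a rigid rotation by $\rho$, so the Reeb vector field restricts on each boundary torus of $\widetilde{M}$ to a linear flow whose oriented line has irrational slope determined by $\rho$ (both boundary circles of the annulus must carry the same rotation number, since the rotation set of the pseudorotation is the singleton $\{\rho\}$). Both boundary tori of $\widetilde{M}$ are two copies of the same torus $T\subset M$, so the monodromy $A$ must carry the oriented Reeb line on one boundary to the oriented Reeb line on the other. Working in a basis of $H_1(T;\mathbb{Z})$ coming from the suspension structure, this says precisely that $A$ admits an eigenvector of irrational slope, and moreover one with positive eigenvalue once orientations are tracked.

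The dichotomy then follows from the classification of conjugacy classes in $SL(2,\mathbb{Z})$ recalled in Section~2.2. Elliptic classes $|\mathrm{tr}(A)|<2$ have no real eigenvectors and are ruled out. Non-trivial parabolic classes have a single eigenline spanned by $(1,0)$, of rational slope, and are also ruled out. The class of $A=-I$ is excluded because its only eigenvalue is $-1$, incompatible with the required positive-eigenvalue condition coming from the orientation of the Reeb vector field. What remains is $A=I$, which gives $M\cong T^3$, or a hyperbolic class $|\mathrm{tr}(A)|>2$, whose eigenvectors have irrational slope by the paper's preliminaries, producing a hyperbolic torus bundle. The main delicate step I expect is the orientation bookkeeping: one has to work with the oriented Reeb line rather than merely the projective line, and carefully check that the identification of $T$ with the two boundaries of $\widetilde{M}$ preserves rather than reverses this orientation in order to exclude $A=-I$.
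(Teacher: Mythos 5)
Your approach genuinely diverges from the paper's and, modulo one step you leave unjustified, it is a cleaner route. The paper works harder: after cutting along $T$ and invoking Theorem~\ref{thm:SHSannulus}, it studies \emph{two} linear foliations near the boundary --- the one tangent to $X$ (irrational slope $\alpha$) and the one given by $\ker\lambda$ (slope $\beta$, which, after a case analysis on whether the slope of $\ker\lambda$ is constant, can be arranged to be irrational and $\neq\alpha$). Preservation of two transverse irrational linear foliations forces the gluing diffeomorphism itself to be \emph{linear} (the argument of \cite[Lemma~4.4]{Re}: the periodic parts of the lift must be constant on a dense irrational line, hence constant), and then the eigenvector with slope $\alpha$ is read off from the linear map. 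You instead skip the linearity of the gluing map entirely and argue purely on $H_1$: that $A$ carries the oriented Reeb direction to itself, with positive eigenvalue.

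The step you elide is exactly the one the paper spends its effort on. You write that because the gluing map preserves the oriented Reeb line on $T$, ``this says precisely that $A$ admits an eigenvector of irrational slope.'' This is \emph{true}, but it is not automatic: the gluing map need not be linear, and a priori the pointwise preservation of a tangent direction does not translate into a statement about the induced action on $H_1(T^2;\mathbb{R})$. The correct justification is an asymptotic-cycle (Schwartzman) argument, or the equivalent direct computation: lifting the gluing map to $\mathbb{R}^2$ and decomposing it into linear-plus-periodic parts, the preservation of the dense foliation of slope $\alpha$ forces the linear part $A$ to satisfy $b\alpha^2+(a-d)\alpha-c=0$, i.e., $(1,\alpha)$ is an eigenvector of $A$. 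Since you only use the Reeb foliation (not $\ker\lambda$), you need precisely this homological argument; the paper gets around it by first proving linearity. You should spell this out.

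On the other hand, your orientation bookkeeping is a genuine improvement over the written proof: ``$A$ has an eigenvector of irrational slope'' alone does \emph{not} rule out $A=-I$ (whose every direction is an eigendirection, including irrational ones), whereas the flow orientation forces the eigenvalue to be positive, excluding $-I$ and leaving exactly $A=I$ ($T^3$) or $A$ hyperbolic. The paper's proof as written jumps to ``identity or hyperbolic'' without explicitly addressing $-I$, so your care here fills a small gap. Net assessment: a valid and shorter alternative, provided you make the homological step explicit.
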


\begin{proof}
Observe that if $f$ does not vanishes, then the Reeb vector field $X$ of $(\lambda, \omega)$ is the Reeb vector field of a contact structure and has periodic orbits by Taubes' theorem~\cite{Taub}. Hence $f$ vanishes somewhere and not everywhere.  Applying Theorem~\ref{thm:struc} we find a non-empty integrable region $U_1\cong T^2\times I$ where $(\lambda,\omega)$ is $T^2$-invariant. By cutting open the manifold $M$ along one of the torus fibers in the interior of $U_1$, we obtain a compact manifold with boundary $\widetilde M\cong T^2\times I$  and coordinates $(x,y,t)$ such that $(\lambda,\omega)$ is $T^2$-invariant near the boundary. The Reeb vector field $X$ defined by $(\lambda,\omega)$ is orbit equivalent to the suspension of a pseudorotation of the annulus by Theorem~\ref{thm:SHSannulus}, denote by $\alpha$ its irrational rotation number. Since the vector field $X$ corresponds to an irrational flow on each invariant torus near the boundary, near $t=0$ and $t=1$ it has irrational slope equal to $\alpha\in\mathbb{R}\setminus\mathbb{Q}$ on each torus. 

We now work near $t=0$, and write the form $\lambda$ and the vector field $X$ as in equations~\eqref{eq-lambdaonreg} and \eqref{eq-Xonregular}:
$$\lambda=g_1(t)dx+g_2(t)dy \qquad  X= a_1(t)\pp{}{x}+a_2(t)\pp{}{y}.$$
Then $\frac{a_2(t)}{a_1(t)}\equiv \alpha$. 

The kernel of $\lambda$ induces a linear foliation on the torus $t=0$ whose slope equal to $-\frac{g_1(0)}{g_2(0)}$. Assume that the function $-\frac{g_1(t)}{g_2(t)}$ is non-constant. Then, modulo changing the cutting torus inside $U_1$, we can assume that $-\frac{g_1(0)}{g_2(0)}$ is irrational and different from $\alpha$. Denote by  $\varphi:T^2 \to T^2 $
the gluing diffeomorphism such that $M$ is obtained by gluing the torus $t=1$ with the torus $t=0$ via $\varphi$. Such a diffeomorphism must preserve the irrational foliation by orbits of $X$, and the foliation given by the kernel of $\lambda$ restricted to $t=1$ must be sent to the foliation spanned by the kernel of $\lambda$ restricted to $t=0$. Let $\delta$ be the slope of the kernel of $\lambda$ at $t=1$, and $\beta=-\frac{g_1(0)}{g_2(0)}$ be the slope of the kernel of $\lambda$ along $t=0$. Write the map $\phi$ induced by $\varphi$ on $\mathbb{R}^2$, considered as the universal cover of $T^2$, in coordinates:
$$  \varphi(x,y)= (\varphi_1(x,y), \varphi_2(x,y)),$$
where $\varphi_1$ and $\varphi_2$ can be expressed in a unique way as 
\begin{align*}
\varphi_1(x,y)&=l_1(x,y)+p_1(x,y)\\
\varphi_2(x,y)&=l_2(x,y)+p_2(x,y)
\end{align*}
where $l_1,l_2:\mathbb{R}^2\to \mathbb{R}^2$ are linear functions and $p_1,p_2:\mathbb{R}^2\to \mathbb{R}^2$ are periodic functions. The fact that the kernel of $\lambda$ is preserved implies that there is some function $G:\mathbb{R}\to \mathbb{R}$ such that
$$ \varphi_2(x,y)-\beta \varphi_1(x,y)=G(y-\delta x).$$
We can now argue as in \cite[Lemma 4.4]{Re}: the function $p_2-\beta p_1$ is constant on the straight line of irrational slope $y=\delta x$ thus it is constant everywhere. This shows that $G(z)=az+b$ for some constants $a,b$ and so
$$ \varphi_2(x,y)-\beta \varphi_1(x,y)=a(y-\delta x)+b.$$
 The same argument applied to the foliation of slope $\alpha$ shows that
$$ \varphi_2(x,y)-\alpha \varphi_1(x,y)=c(y-\alpha x)+d, $$
for some constants $c,d$. Then 
$$c(y-\alpha x)+d+\alpha \varphi_1(x,y)= a(y-\delta x)+b+\beta \varphi_1(x,y),$$
and since $\beta\neq \alpha$, we obtain that $\varphi_1$ and $\varphi_2$ are linear functions. Up to a translation, the diffeomorphism $\varphi$ is given by a matrix $A \in SL(2,\mathbb{Z})$ which necessarily admits an eigenvector with irrational slope $\alpha$. This proves that $A$ is the identity or a hyperbolic matrix of $SL(2,\mathbb{Z})$.  Furthermore, notice that since the two boundary components are identified via $A$, the linear map should identify the foliation given by the orbits of $X$ in $T^2\times \{0\}$ with the foliation given by the orbits of $X$ in $T^2\times \{1\}$. Both foliations are linear of slope $(1,\alpha)$, and hence this vector is necessarily an eigenvector of $A$. However, if $A$ is negative hyperbolic, the foliation glues in a non-coorientable way, which contradicts the existence of $X$ in the first place. This shows that $M$ must be a positive hyperbolic bundle.

We are left with the case in which the function $-\frac{g_1(t)}{g_2(t)}$ is constant. We can assume that we chose the invariant domain $U_1$ such that $f\neq 0$ there. This implies that $d\lambda \neq 0$ on $U_1$, and it has the form
$$ d\lambda= g_1'(t) dt\wedge dx + g_2'(t) dt\wedge dy. $$
Since $\iota_Xd\lambda=0$, it follows that $\frac{g_1'(t)}{g_2'(t)}\equiv -\alpha$. Using that $\frac{g_1(t)}{g_2(t)}$ is constant we deduce that $\frac{g_1(t)}{g_2(t)}=\frac{g_1'(t)}{g_2'(t)}=-{\alpha}$. In other words, we have shown that the slope of the foliation induced by $\ker \lambda$ can be assumed to be irrational on the boundary of $\widetilde M$. We are now in the same situation as in the previous case, finishing the proof.
\end{proof}

\subsection{Proof of Theorem~\ref{thm:main1}}\label{ss:Tisch}

Let $M$ be a closed 3-dimensional manifold equipped with a stable Hamiltonian structure $(\lambda, \omega)$ whose Reeb vector field $X$ is aperiodic. This implies that $M$ is a torus bundle over $\mS^1$. Denote by $f$ the function $\frac{d\lambda}{\omega}$. This function necessarily vanishes somewhere since otherwise, the 1-form $\lambda$ is a positive or negative contact form and the Reeb vector field $X$ admits a periodic orbit by  Taubes' theorem 
\cite{Taub}.  

\textbf{First case: $f\equiv 0$.} Assume first that the function $f$ is constant and equal to $0$. In this case, the one form $\lambda$ is closed, and by Tischler's theorem \cite{Tis} $M$ is a fibration over $\mS^1$ such that each fiber is a global section of $X$. Then the fiber is necessarily a torus. Hence $X$ is orbit equivalent to the suspension of a symplectomorphism of the torus $\varphi:T^2\to T^2$, whose mapping class corresponds to a matrix $A\in SL(2,\mathbb{R})$.
The Lefschetz number of $\varphi$ is given by 
\begin{equation*}
\Gamma_{\varphi}=\sum_{i=0}^2 (-1)^i\operatorname{tr}(\varphi_*H_i(X,\mathbb{Q}))
								=2- \operatorname{tr}(A).
\end{equation*}
By hypothesis $\varphi$ admits no periodic points, which implies that $\Gamma_{\varphi}=0$ and hence $\operatorname{tr}(A)=2$. We deduce that $M$ is either $T^3$ or a positive parabolic torus bundle, and $X$ admits a global section. 

\textbf{Second case: $f\not \equiv 0$ and vanishes somewhere.} In this case Proposition~\ref{prop:bundles} implies that $M$ is either $T^3$ or a positive hyperbolic torus bundle. Theorem~\ref{thm:SHSannulus} shows that there is some embedded invariant torus $T$ such that the compact manifold $\widetilde M$ obtained by cutting open $M$ along $T$ is diffeomorphic to $T^2\times I$ where $X$ admits some annulus-like global section $\Sigma$ and $X$ is orbit equivalent to the suspension of a pseudorotation of the annulus. 

If $M=T^3$, then $\Sigma$ defines in $M$ an immersed surface with boundary $S$, embedded in the interior, and whose boundary is given by two circles defining the same homology class in $H_1(T^2;\mathbb{Z})$. We might argue exactly as in the proof of Theorem~\ref{thm:SHSannulus} to deform a bit $S$ so that it glues smoothly along $T$ and yields a global section of $X$ diffeomorphic to a torus.

If $M$ is a (positive) hyperbolic torus bundle, the missing part is to prove that $X$  does not admit a global section. Assume that there is such surface $S$, that we assume connected. Then after cutting along an invariant torus $T$, we obtain  a global section $\overline S$ of $X$ in $\widetilde M \cong T^2\times I$. Since $X$ in $\widetilde{M}$ admits an annulus-like global section, $\overline{S}$ has to be an annulus and it induces in the boundary of $\widetilde M$ circles with non-trivial homology class. 
The fact that $\overline S$ defines a smooth closed surface in $M$ after the identification of the two boundary components of $\widetilde M$ implies that such identification preserves some integer homology class. In other words, the matrix $A\in SL(2,\mathbb{Z})$ of the mapping class of $\varphi$ admits an eigenvector with integer coordinates, leading to a contradiction with the fact that $A$ is hyperbolic. We deduce that $X$ does not admit a global section, even if after cutting along $T$ it is orbit equivalent to the suspension of a pseudorotation of the annulus. 

Observe now that the rotation number of the annulus defines the slope of $X$ in appropriate coordinates. Arguing as in the proof of Proposition~\ref{prop:bundles}, we know that the gluing diffeomorphism between the two components of $\partial  \widetilde M$ is induced by the matrix $A$. Once we have our well-chosen generators of $H_1(T;\mathbb{Z})$, the irrational slope of $X$ is necessarily the same as the slope of some eigenvector of $A$. Writing the matrix $A$ in the form
\begin{equation*}
A=\begin{pmatrix}
a & b \\
c & d
\end{pmatrix}
\end{equation*}
it follows that the slope $\alpha$ of an eigenvector satisfies $b\alpha^2+(a-d)\alpha-c=0$, a quadratic equation. We conclude that $\alpha$ is a quadratic irrational number. 

Finally, when $(\lambda,\omega)$ is  real analytic, it was shown in \cite[Section 4]{Re} that $M$ is either $T^3$ or a parabolic $T^2$-bundle over $\mS^1$. Combining it with our previous discussion, we deduce that $M\cong T^3$ and only case (1) occurs. This finishes the proof of Theorem~\ref{thm:main1}.

\begin{Remark}\label{rem: pseudorotation}
 The fact that the rotation number is quadratic implies that it is not of Liouville type. In this case, no example is known of a smooth pseudorotation that is not conjugated to a rigid irrational rotation. It is an open question whether non-trivial pseudorotations with Diophantine (i.e. non Liouville) rotation number exist \cite{FK}.
\end{Remark}

\subsection{Aperiodic examples}
A natural question that arises is whether there is actually an aperiodic example in each case not forbidden by Theorem~\ref{thm:main1}. In $T^3$, an example is given by the suspension of the time one map of any linear vector field on $T^2$ with irrational slope. In a positive parabolic bundle, an example is given by the suspension of the map
\begin{align*}
\phi: T^2 &\longrightarrow T^2\\
	 (x,y) &\longmapsto (x+ny,y+\alpha),
\end{align*} 
where $\alpha$ is an irrational number and $n\neq 0$ is an integer that determines the homeomorphism type of the resulting (positive) parabolic torus bundle. 

\medskip

However, we do not know if an example exists for positive hyperbolic torus bundles. The simplest candidate can be constructed as follows. Given a hyperbolic matrix $A\in \operatorname{SL}(2,\mathbb{Z})$ with positive eigenvalues, choose an eigenvector $(1,\alpha)$ 
 ($\alpha \in \mathbb{R}\setminus \mathbb{Q}$) and consider a suspension of an irrational rotation of the annulus with rotation number $\alpha$. Using that $(1,\alpha)$ is an eigenvector and choosing a good parametrization of the suspended flow, one obtains a flow that descends to the quotient $M_A$ obtained by identifying the two boundaries of $T^2\times I$ using $A$. Let us call $\mathcal{F}_\alpha$ the 1-dimensional foliation of $M_A$ given by the orbits of such a flow.  Observe that if we do this construction for a matrix $A$ with negative eigenvalues, the foliation $\mathcal{F}_\alpha$ is not coorientable.
 
 The question at this point is whether this $\mathcal{F}_\alpha$  can be defined by the orbits of a Reeb flow of a SHS. One easily sees that the foliation lies in the kernel of a closed 2-form, so the only relevant point is whether a stabilizing 1-form exists, i.e., whether the foliation is {\it geodesible} \cite{Sul}. It turns out that this is never the case, as shown in \cite[Remark 2.1]{Ac} using the basic cohomology of the foliation. Hence, if a SHS with an aperiodic Reeb vector field exists in some hyperbolic torus bundle, after cutting open along an invariant torus the flow would have to be orbit equivalent to the suspension of a non-trivial pseudorotation of the annulus with quadratic rotation number. As we mentioned in Remark~\ref{rem: pseudorotation}, it is not clear whether such pseudorotations exist.

\section{SHS with finitely many periodic orbits}\label{s:finite}

In this section, we are interested in understanding when a contact non-degenerate SHS admits, or not, infinitely many periodic orbits.

\subsection{Symplectomorphism of surfaces aperiodic at the boundary}

Let $\Sigma$ be a compact surface. A diffeomorphism $\phi:\Sigma \to \Sigma$ is called a symplectomorphism if there exists an area form $\omega$ in $\Sigma$ such that $\phi^*\omega=\omega$.  To simplify the statement of the following theorem, we require that $\phi$ is aperiodic along the boundary. However, the same theorem holds if we require only that there is a non-empty set of connected components of the boundary that is $\phi$-invariant and where $\phi$ has no periodic points.

\begin{theorem} \label{thm:surfirra}
Let $\Sigma$ be a surface with non-empty boundary. Let $\phi:\Sigma \to \Sigma$ be a symplectomorphism without periodic points along the boundary. Then $\phi$ admits periodic points of arbitrarily large period in the interior of $\Sigma$ unless either $\Sigma\cong D^2$ and $\phi$ is a pseudorotation of the disk, or $\Sigma \cong \mathbb{A}$ and $\phi$ is a pseudorotation of the annulus.
\end{theorem}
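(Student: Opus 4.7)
The plan is to reduce the statement to a closed surface by capping off the boundary components and then invoke known classifications of symplectic homeomorphisms of closed surfaces with finitely many periodic points, due to Franks and Le Calvez. Since $\phi$ has no periodic points on $\partial \Sigma$, it permutes the boundary components; by replacing $\phi$ with a suitable iterate $\phi^N$ we may assume every boundary component is $\phi$-invariant, which is harmless because $\phi^N$ has periodic points of arbitrarily large period if and only if $\phi$ does. On each invariant boundary circle, $\phi$ restricts to an orientation-preserving circle diffeomorphism without fixed points, hence of irrational rotation number, and is therefore topologically conjugate to a rigid irrational rotation.

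Next, I would glue a disk $D_i$ to each boundary circle of $\Sigma$ and extend $\phi$ continuously across $D_i$ as a rigid irrational rotation about its center $p_i$; after an appropriate gluing of area forms, the resulting map $\hat \phi : \hat \Sigma \to \hat \Sigma$ is an area-preserving homeomorphism of a closed surface $\hat \Sigma$, whose set of periodic points is exactly the union of $\{p_1,\dots,p_k\}$ with the periodic points of $\phi$ in $\operatorname{int}(\Sigma)$. Consequently, the conclusion of the theorem for $\phi$ is equivalent to the corresponding statement for $\hat \phi$.

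The proof then splits into cases according to the topology of $\hat \Sigma$. If $\hat \Sigma \cong \mathbb{S}^2$, Franks' theorem on the sphere asserts that an area-preserving homeomorphism of $\mathbb{S}^2$ with at least three periodic points admits infinitely many periodic orbits of arbitrarily large period; thus $\hat \phi$ has at most two periodic points, which must be the $p_i$, forcing $k \le 2$ and $\Sigma \cong D^2$ or $\Sigma \cong \mathbb{A}$, placing us in one of the two exceptional cases. If $\hat \Sigma \cong T^2$, then after possibly passing to another iterate (if the mapping class of $\hat \phi$ has infinite order, Nielsen--Thurston already produces orbits of arbitrarily large period) we reduce to $\hat \phi$ isotopic to the identity with a fixed point $p_1$, so Franks' torus theorem yields infinitely many periodic orbits of arbitrarily large period. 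If the genus of $\hat \Sigma$ is at least $2$, I would invoke Theorem~1.2 of \cite{Lec1}: a symplectic homeomorphism of a closed surface of genus $\ge 2$ with finitely many periodic points lies in a finite-order isotopy class with a very rigid fixed-point structure; applying Le Calvez's theory of maximal isotopies and equivariant transverse foliations relative to the capped fixed points $p_i$, the irrational rotation behavior around each $p_i$ is incompatible with this rigidity and produces infinitely many periodic orbits.

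The main obstacle is the higher-genus case: while the capping reduction is transparent, extracting infinitely many periodic points from the combination of Le Calvez's structural classification and the irrational boundary dynamics requires delicate tracking of the mapping class of $\hat \phi$ and a careful use of transverse foliations passing through each cap fixed point. A secondary subtlety is that the capping extension is only a homeomorphism at the glued boundary circles (Denjoy conjugations are in general not smooth), so the argument must be carried out in the area-preserving homeomorphism category, which is precisely where the theorems of Franks and Le Calvez apply.
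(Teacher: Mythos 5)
Your reduction step is essentially the paper's: pass to an iterate so each boundary circle is invariant, then cap off each boundary circle (your ``glue a disk and extend as a rigid irrational rotation'' is the same as the paper's blow-down) to get an area-preserving homeomorphism $\hat\phi$ of a closed surface $\hat\Sigma$ with at least one fixed point of irrational rotation number at each cap center $p_i$. The genus $0$ case is handled identically via Franks. Beyond that, however, there are two genuine problems.

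In the torus case your argument has a gap. You assert that ``if the mapping class of $\hat\phi$ has infinite order, Nielsen--Thurston already produces orbits of arbitrarily large period.'' This is false for parabolic mapping classes of $T^2$ (conjugates of $\begin{pmatrix}1&n\\0&1\end{pmatrix}$ with $n\neq 0$): these have infinite order in $SL(2,\mathbb{Z})$ but are reducible, not Anosov, and Nielsen--Thurston alone does not produce any periodic orbits for them. Your remaining subcase (``isotopic to the identity, then Franks' torus theorem'') thus does not cover the parabolic case, and the appeal to a ``Franks' torus theorem'' is also imprecise as stated: a fixed point with irrational rotation number does not by itself put the rotation set in the regime where Franks' torus results apply. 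The paper instead treats all three types of $A\in SL(2,\mathbb{Z})$ uniformly: lift $\hat\phi$ to $\widetilde\varphi$ on $\mathbb{R}^2$ fixing a lift $\widetilde z$ of $p_1$, compactify $\mathbb{R}^2$ to the closed disk, extend $\widetilde\varphi$ to the boundary circle via the linear part $A$, observe that the boundary rotation number is rational for elliptic and parabolic $A$ (a fixed or periodic point coming from an eigenvector or finite order), then blow up $\widetilde z$ to get an annulus map with irrational rotation number on one end and rational on the other, and finally apply Franks' annulus theorem (\cite[Cor.~2.4]{F1}) to produce periodic orbits of arbitrarily large period; hyperbolic $A$ is dispatched separately. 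You should replace your torus discussion with an argument along these lines, or at minimum add a separate treatment of the parabolic case.

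For genus $\geq 2$ you propose to invoke Le~Calvez's classification \cite{Lec1} directly. The paper acknowledges in a remark after the proof that this shortcut is available, but deliberately gives a direct argument instead. Your plan stops at ``the irrational rotation behavior around each $p_i$ is incompatible with this rigidity,'' which is precisely the step that requires work and is not supplied. The paper's direct route again mirrors its torus argument: pass to an iterate $\varphi^q$ that is isotopic to the identity or a Dehn twist (pseudo-Anosov components are handled by Handel \cite{Ha}), lift to the Poincar\'e disk and extend to $S_\infty$, prove by hand that the boundary map has a periodic point (hence rational rotation number at infinity) using deck transformations in the identity case and lifts of the invariant reducing curve in the Dehn twist case, blow up the lift of the fixed point $\widetilde z$ (which has irrational rotation number), and apply Franks on the resulting annulus. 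If you wish to follow your plan of invoking \cite{Lec1}, you should state precisely which theorem you use and show explicitly how the irrational rotation number at the $p_i$'s rules out the finite-order exceptional classes it allows; as written, this step is asserted rather than proven.
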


\begin{proof}
Let $\Sigma=\Sigma_g^b$ be a surface of genus $g$ with $b>0$ boundary components. Up to considering an iterate of $\phi$, we can assume that each boundary component is preserved. Then $\phi$ restricts on each boundary component to a diffeomorphism of the circle that is conjugated to an irrational rotation (we will only need this property in one of the boundary components). 

We can blow-down each boundary component of $\Sigma_g^b$ to a point. We obtain a closed surface of genus $g$ on which $\phi$ induces a homeomorphism that preserves a measure that is positive on non-empty open sets  
$$ \varphi: \Sigma_g \longrightarrow \Sigma_g. $$
Then every point of $\Sigma_g$ is non-wandering under $\varphi$.
Observe that $\varphi$ has at least $b$ fixed points, that correspond to the connected components of $\partial \Sigma_g^b$. Choose one of them that we call $z\in \Sigma_g$. For the proof, we consider three cases: $g=0$,  $g=1$ and $g\geq 2$.

\smallskip

 We start with the case $g=0$, thus $\Sigma_g$ is a sphere. Theorem~\ref{thm:Fk} shows that either $\varphi$ is a pseudorotation (if the rotation set is a single point), or $\varphi$ has infinitely many periodic points of arbitrarily large periods. We deduce that $\phi$ has infinitely many periodic points of arbitrarily large period or $\Sigma_g^b$ is either a disk $D^2$ or an annulus $\mathbb{A}$, and in both cases $\phi$ is a pseudorotation. Observe that if a positive iterate of a homeomorphism is a pseudorotation, the same holds for the homeomorphism itself.  

\smallskip

Assume now that the surface is a torus $\Sigma_g=\Sigma_1=T^2$. Consider $\mathbb{R}^2$ as the universal cover of $T^2$, and
$$ \widetilde \varphi: \widetilde \Sigma_1  \longrightarrow \widetilde  \Sigma_1,$$
a lift of $\varphi$ such that $\widetilde \varphi(\widetilde z)=\widetilde z$ for  some lift $\widetilde z$ of $z$. The homeomorphism $\widetilde \varphi$ has no wandering points. Decompose $\widetilde \varphi$ as
 $\widetilde \varphi(x,y)= A(x,y) + g(x,y)$,
where $A\in SL(2,\mathbb{Z})$ and $g$ is $\mathbb{Z}$-periodic in both coordinates. 

If $A$ is a hyperbolic element of $SL(2,\mathbb{Z})$, it is well known that the map $\varphi$ necessarily admits infinitely many periodic points of arbitrarily large period \cite{F0, Ha}. Thus we assume that $A$ is either parabolic, elliptic, or the identity.

Next we identify $\mathbb{R}^2$ with the open unit  disk $D^2$ via the map $\rho(x,y)=\frac{(x,y)}{||(x,y)||+1}$. We can construct a continuous extension of $\widehat \varphi= \rho\circ \widetilde \varphi \circ \rho^{-1}$ to $ \overline{D^2}$ as follows: any sequence of points of the form $(u_n,v_n)$ such that $ (u_n,v_n) \rightarrow (u,v) \in \partial D^2$ satisfies 
$$\lim_{n \to \infty} \rho \circ \widetilde \varphi \circ \rho^{-1} (u_n,v_n) = \frac{A(u,v)}{||A(u,v)||}.$$
This allows us to extend the map $\widehat \varphi$ to a map of $\overline{D^2}$.  

We now consider the different types of $A\in SL(2,\mathbb{Z})$. If $A$ is of finite order, i.e. an elliptic element of $SL(2,\mathbb{Z})$, then $\widehat \varphi$ corresponds to a rational rotation along $\partial D^2$. If $A$ is parabolic, we can find an eigenvector of $A$ which induces a fixed point of $\widehat \varphi$ along the boundary. Finally, if $A$ is the identity, then $\widehat \varphi$ is just the identity on the boundary $\partial D^2$.  In the three cases, observe that $\widehat \varphi$ admits either a fixed point or a periodic point along the boundary of the disk. This implies that the rotation number of $\widehat \varphi$ along the boundary is rational.

Let $\hat z=\rho(\widetilde z)\in D^2$. We blow up $\hat z$, obtaining a homeomorphism with no wandering points of the closed annulus $\mathbb{A}$. The boundary component obtained by blowing up $\hat z$ is invariant by this homeomorphism and the dynamics restricted to it is not necessarily a rigid rotation, but the rotation number is irrational. On the other boundary component, the rotation number is rational. Hence, these numbers are different and we deduce that the rotation set of the homeomorphism of the annulus has at least two points (see \textsection\ref{ssec-surfacedyn} for the definition of the rotation set). Thus, by Theorem~\ref{thm:Fk} there is a compact subset $K$ of $D^2$ with periodic points of arbitrarily large period. The compactness of $K$ implies that each point in $\Sigma_g$ has only finitely many preimage points in $K$ by the projection from the universal cover, so we deduce that the projection of $K$ in $\Sigma_g$ contains periodic points of $\varphi$ of arbitrarily large period. It follows that $\phi:\Sigma_1^b \to \Sigma_1^b$ also admits infinitely many periodic points of arbitrarily large periods. 

\smallskip

It only remains to analyze the case where the surface is $\Sigma_g$ with $g \geq 2$. By the Nielsen-Thurston decomposition \cite{T, FLP}, we know that there is a positive integer $q$ such that one of the following holds:
\begin{itemize}
\item[-] $\varphi^q$ is isotopic to the identity,
\item[-] $\varphi^q$ is isotopic to a Dehn twist,
\item[-] the decomposition of $\varphi$ has at least one pseudo-Anosov component.
\end{itemize} 
If $\varphi$ has a pseudo-Anosov component, it is well-known that $\varphi$ admits infinitely many periodic points of arbitrarily large periods \cite{Ha}. Otherwise, denote $\varphi^q$ by $\Phi$. The universal cover of $\Sigma_g$ is the Poincar\'e disk  $\widetilde \Sigma_g \cong D^2$ and consider a lift of $\Phi$
$$ \widetilde \Phi: D^2 \longrightarrow D^2 $$
such that $\widetilde \Phi(\widetilde z)=\widetilde z$, where $\widetilde z$ is one lift of the fixed point $z$. Such homeomorphism admits an extension to the boundary of the disk $S_\infty=\partial D^2$ (see e.g. \cite[Corollary 1.2]{T}). 

\begin{lemma}\label{lemma: rational}
    The rotation number of $\widetilde{\Phi}|_{S_\infty}$ is a rational number.
\end{lemma}

\begin{proof}
We will show that $\widetilde{\Phi}|_{S_\infty}$ necessarily has a periodic point, which implies that its rotation number is rational. Let $\Phi'$ be a diffeomorphism isotopic to $\Phi$ which is either the identity or a Dehn twist. We can lift the isotopy, obtaining a homeomorphism $\widetilde \Phi'$ isotopic to $\widetilde \Phi$. Abusing notation, we also denote by $\widetilde \Phi'$ and $\widetilde \Phi$ the extended homeomorphisms to the closed disk. 

If $\Phi'$ is the identity then $\widetilde \Phi'$ coincides with a deck transformation of $D^2$, which is given by a hyperbolic translation. This readily implies that $\widetilde \Phi'$ admits two fixed points (the endpoints of the translation axis) along $S_\infty$. Since $\widetilde \Phi'|_{S_\infty}=\widetilde \Phi|_{S_\infty}$, we deduce that $\widetilde \Phi$ also admits two fixed points along the boundary of the disk. This proves Lemma~\ref{lemma: rational} in the first case.

Otherwise, $\Phi'$ is a Dehn twist and we can find some simple closed curve $\gamma$  non-trivial in homology that is preserved by $\Phi'$. The lift of $\gamma$ to the universal cover is an infinite family of disjoint open segments, and the closure of each intersects $S_\infty$ at two points. Fix one of such lifts $\widetilde \gamma_0$, with boundary points $p_1,p_2 \in S_\infty$. Since $\gamma$ is preserved by $\Phi'$,  $\widetilde \Phi'(\widetilde \gamma_0)$ is another segment $\widetilde \gamma_1$ with boundary points $q_1, q_2 \in S_\infty$ (that projects into $\gamma$). Both segments $\widetilde \gamma_0$ and $\widetilde \gamma_1$ (with their boundary points included) are either equal or disjoint. If $\widetilde{\gamma}_0=\widetilde{\gamma}_1$, then $(\widetilde{\Phi'})^2(p_1)=(\widetilde{\Phi})^2(p_1)=p_1$ and Lemma~\ref{lemma: rational} is proved. If $\widetilde{\gamma}_0\neq \widetilde{\gamma}_1$, assume without loss of generality that $\widetilde \Phi'(p_1)=q_1$ and $\widetilde \Phi'(p_2)=q_2$. 

 Let us show that $\widetilde \Phi'$ admits a periodic point at the boundary. Let $J_0$ be the closed interval in $S_\infty$ whose boundary is $p_1$ and $p_2$ and such that $q_1,q_2$ are not in the interior of $J_0$. Such an interval exists because the curves $\widetilde \gamma_0$ and $\widetilde \gamma_1$ are disjoint. Denote by $J_1$ the interval $\widetilde \Phi'(J_0)$ which satisfies  $\partial J_1=\{q_1,q_2\}$. Two cases can occur. In the first case, we have $J_0\subset J_1$, and we can apply Brouwer's fixed point theorem to $(\widetilde \Phi')^{-1}$, deducing that $\widetilde \Phi'$ admits a fixed point. In the second case, $J_0$ and $J_1$ are disjoint. Consider the interval $H=(\widetilde \Phi')^2(J_0)$ and we claim either $H\subseteq J_0$ or $J_0\subset H$.  To see this, note first that $H$ is the image of $J_1$ by $\widetilde \Phi'$ and hence lies in the complementary of the interior of $J_1$, because $J_1=\widetilde \Phi'(J_0)$ and we are assuming that $J_0$ and $J_1$ are disjoint along their interior.  Furthermore, the boundary points of $H$ lie in the boundary of some curve $(\widetilde \Phi')^2(\widetilde \gamma)$ which is another lift of $\gamma$ and so is either equal to or disjoint with $\widetilde \gamma$. This shows that the boundary points of $H$ are either both contained in $J_0$ or both in $S_\infty\setminus J_0$, since otherwise the curves $\widetilde \gamma$ and $(\widetilde \Phi')^2(\widetilde \gamma)$ would be different but with non-trivial intersection. 
 
 We conclude that if the boundary points of $H$ lie in $J_0$ then $H\subseteq J_0$, and if they lie outside of $J_0$ then $J_0\subset H$. We can apply Brouwer's fixed point theorem to $(\widetilde \Phi')^2$ or to $(\widetilde \Phi')^{-2}$ to conclude that $\widetilde \Phi'$ admits a periodic point of period two. Since $\widetilde \Phi'|_{S_\infty}=\widetilde \Phi|_{S_\infty}$, we deduce that $\widetilde\Phi $ always admits a periodic point at the boundary.
 \end{proof}

We can now finish the proof of Theorem~\ref{thm:surfirra} as in the case of the torus. The lift $\widetilde \Phi$ fixes a point $\widetilde z$ with an irrational rotation number, and since it admits a periodic point along $S_{\infty}$, it has a rational rotation number along $S_\infty$. We can blow up the fixed point $\widetilde{z}$ to obtain a homeomorphism with no wandering points of the closed annulus with different rotation numbers at each boundary component, and conclude as before that $\varphi$, and hence $\phi$, admits periodic points of arbitrarily large period.
\end{proof}


\begin{Corollary}
Let $(\lambda, \omega)$ be a stable Hamiltonian structure ({possibly} degenerate), and fix some structural decomposition. If $N_0\neq \emptyset$ and there is a connected component of $N_0$ that is not a solid torus or a thickened torus ($T^2\times I$), then the Reeb vector field admits infinitely many periodic orbits.
\end{Corollary}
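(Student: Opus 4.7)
I would argue by contrapositive: assume that $X$ has only finitely many periodic orbits and show that every connected component of $N_0$ is either a solid torus or a thickened torus $T^2\times I$.

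Fix a component $C\subset N_0$. As observed in Section~\ref{ss:SHS} just after Theorem~\ref{thm:struc}, on $N_0$ one has $d\widetilde\lambda=0$, and consequently $N_0$ fibers over $\mS^1$ with fibers transverse to $X$. Restricting this structure, $C$ is the mapping torus of a diffeomorphism $\phi:\Sigma\to\Sigma$ of a compact surface $\Sigma$ (possibly with boundary), where $\phi$ is the first-return map along a fiber. Since $\omega$ is closed and $\iota_X\omega=0$, its pullback to a fiber is a $\phi$-invariant area form, so $\phi$ is a symplectomorphism and $X|_C$ is orbit equivalent to the suspension of $\phi$. Each component of $\partial C$ is an invariant torus lying in the integrable region $U$, where $X$ is a linear vector field of irrational slope transverse to the fibration direction; hence $\phi$ permutes the components of $\partial\Sigma$ and, up to replacing $\phi$ by a suitable iterate (which has the same orbits), restricts to an irrational rotation on each boundary circle. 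In particular $\phi$ has no periodic points along $\partial\Sigma$.

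Now Theorem~\ref{thm:surfirra} applies and yields three possibilities: $\phi$ has periodic points of arbitrarily large period in $\operatorname{int}(\Sigma)$, or $\Sigma\cong D^2$ and $\phi$ is an irrational pseudorotation of the disk, or $\Sigma\cong\mathbb{A}$ and $\phi$ is an irrational pseudorotation of the annulus. In the first scenario, suspending produces periodic orbits of $X$ in $C$ of arbitrarily long period, hence infinitely many distinct ones, contradicting the standing assumption. In the remaining two scenarios, an irrational pseudorotation is by definition isotopic to the identity, so the mapping torus $C$ is diffeomorphic to the trivial bundle $\Sigma\times\mS^1$: a solid torus when $\Sigma\cong D^2$ and a thickened torus $T^2\times I$ when $\Sigma\cong\mathbb{A}$. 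This is exactly the conclusion.

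The step I expect to require the most care is the boundary analysis: proving that $\phi$ is aperiodic on $\partial\Sigma$ amounts to checking that the linear irrational flow on each boundary torus of $C$ restricts to an irrational rotation on each boundary circle of $\Sigma$, rather than being tangent to it. This uses the transversality of $X$ to the fibers of $N_0\to\mS^1$ (so no boundary circle of $\Sigma$ is itself an orbit of $X$) combined with the irrationality of the slope in $U$, and requires bookkeeping of the two generating cycles on each boundary torus; once that is in place, the rest is a direct invocation of Theorem~\ref{thm:surfirra} and the elementary topology of mapping tori of maps isotopic to the identity.
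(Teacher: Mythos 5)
Your proof is correct and in essence the same as the paper's: both reduce the corollary to the fact that each component of $N_0$ is the mapping torus of a symplectomorphism of a compact surface that is aperiodic on the boundary (since the boundary tori have irrational slope, which is automatic once finitely many periodic orbits is assumed), and then invoke Theorem~\ref{thm:surfirra}. The only substantive difference is the organization: the paper argues directly (``the fiber is not a disk nor an annulus, hence infinitely many orbits''), while you argue by contrapositive and then read off the homeomorphism type of the mapping torus from the conclusion that the monodromy is an irrational pseudorotation, hence isotopic to the identity. Your contrapositive version is actually a bit more careful on one point: the paper's direct statement ``the fiber is neither a disk nor an annulus'' implicitly ignores the case of an annulus fiber with boundary-swapping monodromy (whose mapping torus is the orientable twisted $I$-bundle over the Klein bottle, which is neither a solid torus nor $T^2\times I$); that case is automatically handled by Theorem~\ref{thm:surfirra} since such a $\phi$ is not isotopic to the identity and hence not a pseudorotation, and your formulation via the isotopy class of the monodromy makes this transparent. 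Apart from that, everything else — the use of Tischler's theorem on $N_0$ (since $d\widetilde\lambda=0$ there), the observation that $\omega$ pulls back to a $\phi$-invariant area form, and the boundary analysis on the invariant tori of $U$ — matches the paper's proof.
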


\begin{proof}
Assume by contradiction, that there is a connected component of $N_0$ that is neither a solid torus, nor $T^2\times I$ and that the Reeb vector field has finitely many periodic orbits. Then, in every invariant torus in $U$ the linear vector field has irrational slope. The Reeb flow in $N_0$ is orbit equivalent to the suspension of a symplectomorphism of a surface with boundary, which by hypothesis is neither a disk nor a torus. It follows from Theorem~\ref{thm:surfirra} that the Reeb vector field admits infinitely many periodic orbits.
\end{proof}

\subsection{Proof of Theorem~\ref{thm:main2}}

Let $M$ be a closed manifold endowed with  a contact non-degenerate SHS  $(\lambda,\omega)$, i.e. the Reeb vector field is non-degenerate in $N_c$ for some structural decomposition $U$, $N_0$, $N_c$. Assume that the Reeb vector field $X$ is not aperiodic. We separate the proof depending on whether $U$ is empty or not. For each case,  we first prove that the only contact non-degenerate cases with at least one and finitely many periodic orbits are the two options listed in Theorem~\ref{thm:main2}. We then explain why 
the statement holds in a neighborhood of such a SHS.

{If $U=\emptyset$, then either $M=N_c$ or $M=N_0$. If $M=N_c$,} then the Reeb vector field $X$ is, up to reparametrization, a non-degenerate Reeb vector field of the contact form $\lambda$. 
It follows from \cite[Theorem 1.2]{CDR} that the Reeb vector field of $(\lambda,  \omega)$ has infinitely many periodic orbits unless either $M$ is the sphere or $M$ is a lens space and there are exactly two periodic orbits that are core circles of a genus one Heegaard splitting of $M$. 

\smallskip
Consider now the case in which $M=N_0$. Then the vector field $X$ is orbit equivalent to the suspension of a symplectomorphism of a closed surface, so we deduce that if $X$ has finitely many periodic orbits, it is orbit equivalent to the suspension of a symplectomorphism of a surface with finitely many periodic points.

\smallskip

The last case to analyze is when $U$ is non-empty. Assume that $(\lambda,\omega)$ is such that its Reeb vector field has finitely many periodic orbits, and at least one. For each connected component of $N_0$ we know that the Reeb vector field $X$ is orbit equivalent to a suspension of symplectomorphism of a surface with boundary $\Sigma_g^b$. Then in each boundary component of $N_0$ the Reeb vector field is orbit equivalent to an irrational linear flow, so in the boundary of $\Sigma_g^b$ the corresponding symplectomorphism has no periodic points. By Theorem~\ref{thm:surfirra} we deduce that each component of $N_0$ is diffeomorphic to a solid torus or to $T^2\times I$ and $X$ restricted to each connected component of $N_0$ is orbit equivalent to the suspension of a pseudorotation.  

Now, in each connected component of $N_c$, the vector field $X$ is a non-degenerate Reeb vector field of the contact form $\lambda$, which is $T^2$-invariant near the boundary. 
Assuming that $X$ has finitely many periodic orbits, Lemma~\ref{lem:reebpseudo} implies that each connected component of $N_c$ is diffeomorphic to a solid torus or to $T^2\times I$, and that $X$ restricted to each connected component of $N_0$ is orbit equivalent to a suspension of a pseudorotation. 

The connected components of $N$ are glued along their boundary to boundary components of $V=\overline{U\setminus N}$. Each connected component of $V$ is diffeomorphic to $T^2\times I$ and, since $X$ has finitely many periodic orbits,  $X$ restricted to one connected component of $V$ is orbit equivalent to the suspension of an irrational rotation of an annulus. 

In conclusion, the manifold $M$ is obtained by gluing a finite number of copies of $\mS^1\times D^2$ and $T^2\times I$ along their boundaries. Since $M$ is assumed to be connected, we deduce that there are either two or zero copies of $\mS^1\times D^2$. In the first case, the vector field $X$ has no periodic orbits, since in each domain $T^2\times I$ it is orbit equivalent to the suspension of a pseudorotation of the annulus. Since we assumed that $X$ has at least one periodic orbit, we deduce that exactly two connected components in $N$ are diffeomorphic to  $\mS^1\times D^2$. After gluing together all the components that are diffeomorphic to $T^2\times I$ iteratively to one of the solid tori, we obtain a decomposition of $M$ into two solid tori $V_1, V_2$ that share a common boundary. This shows that $M$ is either $S^2\times S^1$ and the vector field is conjugate to the suspension of a symplectomorphism of the sphere with exactly two periodic points, or a sphere or a lens space and the periodic orbits of $X$ are core circles of $V_1$ and $V_2$ which define a genus one Heegaard splitting of $M$. 

\smallskip

This shows that the conclusions of Theorem~\ref{thm:main2} for the Reeb vector field of a contact non-degenerate SHS. We now prove that the same holds for a SHS with a periodic Reeb orbit and that is $C^2$-close to a contact non-degenerate SHS $(\lambda,\omega)$.

Denoting as before $U,N_c, N_0$ a structural decomposition of $(\lambda,\omega)$, assume first that $U=\emptyset$ and $M=N_c$. The Reeb vector field of any $C^2$-close SHS $(\widetilde \lambda, \widetilde \omega)$ will be the Reeb vector field of the contact form $\widetilde \lambda$ which is $C^2$-close to $\lambda$. Since \cite[Theorem 1.2]{CDR} is valid in a $C^2$-neighborhood of the contact form $\lambda$ we deduce that the conclusions of Theorem \ref{thm:main2} hold for $(\widetilde \lambda, \widetilde \omega)$. 

Assume now that $U=\emptyset$ and $M=N_0$. Then the Reeb vector field admits a global section, and this is an open condition. Thus the Reeb vector field of any SHS $C^2$-close to $(\lambda,\omega)$ is orbit equivalent to the suspension of a symplectomorphism, and the proof of Theorem~\ref{thm:main2} applies directly in this case. 

The last case is when $U\neq \emptyset$. By \cite[Theorem 3.7]{CV1}, given a stable Hamiltonian structure $(\widetilde \lambda, \widetilde \omega)$ sufficiently $C^2$-close to $(\lambda,\omega)$, there will be a $T^2$-invariant integrable region $K_i$ inside each connected component $U_i$ of $U$ of almost full measure in $U$. Let $K$ be the disjoint union of the $K_i$. In particular we can assume that $N_0$ and $N_c$ still have a neighborhood of each of their boundary components contained in $K$, and we can redefine the domains $N_0, N_c$, so that their boundary is an invariant torus of the Reeb vector field of $(\widetilde \lambda, \widetilde \omega)$ in $K$. Assume that the vector field $\widetilde X$ of $(\widetilde \lambda, \widetilde \omega)$ has finitely many periodic orbits and at least one periodic orbit. Then the slope of $\widetilde X$ has constant irrational slope in each $K_i$. The complement of  $K$, which is a slightly shrunk version of $U$, is diffeomorphic to $N_0\sqcup N_c$. In $N_0$, the Reeb vector field still admits a global section, since there it is $C^1$-close to the Reeb vector field of $(\lambda,\omega)$, which is orbit equivalent to a suspension flow. In the contact region $N_c$, the Reeb vector field of $(\widetilde \lambda, \widetilde \omega)$ is the Reeb vector field of the contact form $\widetilde \lambda$, which is $C^2$-close to $\lambda$. Since a neighborhood of the boundary of $N_c$ is contained in the interior of $K$, we are in the setting of Lemma~\ref{lem:reebpseudo} except that the contact form is not necessarily non-degenerate. However, the lemma still applies to (contact) Reeb vector fields in a $C^1$-small neighborhood of non-degenerate ones, since so does \cite[Theorem 1.2]{CDR}. Then each argument of our proof for the contact non-degenerate case above applies as well to the Reeb vector field of $(\widetilde \lambda, \widetilde \omega)$. This concludes the proof of Theorem~\ref{thm:main2}. 

\begin{Remark}\label{rem:degeneratecase}
Notice that if \cite[Theorem 1.2]{CDR} and hence Lemma~\ref{lem:reebpseudo} can be proven without the non-degeneracy hypothesis, then our proof applies and Theorem~\ref{thm:main2} holds without the contact non-degeneracy hypothesis. 
\end{Remark}

\section{Broken books and Birkhoff sections for SHS}\label{s:SHSsections}

The aim of this section and the next one is to generalize the main results of \cite{CDR} and \cite{CDHR, CM}, from Reeb vector fields of a contact form to Reeb vector fields of a SHS. In particular, we give a proof of Theorem~\ref{thm:mainBirk}. We start by recalling the following definitions of subsets of SHS:
\begin{enumerate}
\item the set $\mathcal{B}$ of contact non-degenerate SHS such that, there is a structural decomposition with a contact region where the Reeb vector field is non-degenerate and in each connected component of the integrable region $U$ the slopes of the Reeb vector field are non-constant (see \textsection\ref{ss:SHS} for more details).
    \item the set $\mathcal{B}_s\subset \mathcal{B}$ of strongly contact non-degenerate SHS such that, there is a structural decomposition with a contact region where the Reeb vector field is strongly non-degenerate and in each connected component of the integrable region $U$ the slopes of the Reeb vector field are non-constant.
\end{enumerate}

Broken book decompositions, introduced in \cite{CDR} (consult Definitions~\ref{defn:BBD} and \ref{defn:BBD2}), provide a strong tool for studying the dynamics of a vector field. In particular, they are the starting point to construct Birkhoff sections under generic hypotheses (see \cite{CDHR} and \cite{CM}). We prove the existence of a supporting broken book decomposition for  Reeb vector fields of a SHS in $\mathcal{B}$.

We now recall the definition of Birkhoff section, which we mentioned in the introduction, and introduce the notion of $\partial$-strongness.

\begin{defi}
    A Birkhoff section of a flow $X$ on a compact 3-manifold $M$ is an immersed surface $i:S \longrightarrow M$ satisfying that
    \begin{itemize}
        \item[-] it is embedded and transverse to $X$ in the interior,
        \item[-] the boundary is tangent to $X$,
        \item[-] it intersects any positive trajectory of $X$ in bounded time.
     \end{itemize}
\end{defi}

If we remove the last property, the surface is called a transverse surface. Given a periodic orbit $\gamma$ of $X$ we denote by $\Sigma_\gamma$ the unit normal bundle~$(TM_\gamma/T\gamma)/\R_+$ to~$\gamma$ and by~$M_\gamma$ the normal blow-up of~$M$ along~$\gamma$, that is the manifold~$(M\setminus\gamma)\cup \Sigma_\gamma$.
The vector field~$X$ extends to a vector field~$X_\gamma$ on the torus~$\Sigma_\gamma$ and hence to a vector field on~$M_\gamma$ tangent to the boundary. Observe that $X_\gamma$ restricted to the interior of $M_\gamma$ coincides with $X$ in $M\setminus \gamma$.
We abuse notation and still denote this extension by~$X$.
If $S$ is a transverse surface in $M$ with~$\gamma \subset \partial S$, we denote by~$\partial_\gamma S$ the extension to~$\Sigma_\gamma$ of the boundary components of $S$ that cover $\gamma$.

\begin{defi}\label{defn-strong}
A transverse surface $S$ is {\it $\partial$-strong} if, for every boundary orbit $\gamma$ of~$S$, the extension~$\partial_\gamma S$ is a collection of embedded curves in~$\Sigma_\gamma$ that are transverse to the vector field~$X_\gamma$.
If $S$ is a Birkhoff section, $S$ is {\it $\partial$-strong} if moreover $\partial_\gamma S$ defines a section of~$X_\gamma$. 
\end{defi}

The rough idea of the existence of broken books is the following. First, we assume that the Reeb vector field of a SHS is neither a suspension nor a Reeb vector field of a contact form. In $N_0$ we have global sections whose boundary is contained in $\partial N_0$, while in $N_+$ and $N_-$ we have a broken book decomposition whose pages might have boundary components contained in $\partial N_+$ and $\partial N_-$ respectively. We want to paste the boundaries of these two types of transverse surfaces along the integrable region, and this is done with  {\it helix boxes} of the form $T^2\times I$ contained in the region $U$, as explained in \textsection\ref{sec:gluing}.

We start with the construction of the helix boxes 
in \textsection\ref{sec:gluing}. Then in \textsection\ref{sec:bbook} we prove Theorem~\ref{thm:mainBirk}. 

\subsection{Birkhoff sections in $T^2\times I$}\label{sec:gluing}

{We proceed to the construction of Birkhoff sections (see \textsection\ref{sec:intro}) with prescribed boundary behavior of certain vector fields in $T^2\times I$.} We call $T^2\times I$ endowed with the  Birkhoff section a helix box. Consider $T^2\times I$ with a non-singular vector field $X$ that is tangent to the boundary. As in \textsection\ref{sec:prelim}, we say that $X$ is $T^2$-invariant if it is tangent to and conjugate to a linear flow in each torus fiber of $T^2\times I$ (as in Equation~\eqref{eq-Xonregular}). It would be enough that $X$ is orbit equivalent to a linear flow on each fiber, but then up to reparametrization the flow is just $T^2$-invariant.

\begin{prop}\label{prop:helix}
Let $X$ be a vector field on $T^2\times I$ that is $T^2$-invariant, with a periodic orbit $\nu$ in $T^2\times \{t^*\}$ for some $t^*\in(0,1)$. Let $\gamma_0,\gamma_1$ be two connected closed curves respectively in $T^2 \times \{0\}$ and $T^2\times \{1\}$. Assume that:
\begin{enumerate}
    \item $\gamma_0\times I$ and $\gamma_1\times I$ are positively transverse to $X$ respectively in $T^2\times [0,t^*]$ and $T^2 \times [t^*,1]$;
    \item the homology classes $[\gamma_0]$ and $[\nu]$ generate $H_1(T^2;\mathbb{Z})$;
    \item  $[\gamma_0]\neq [\gamma_1]$ in $H_1(T^2;\mathbb{Z})$.
\end{enumerate}
Write $[\gamma_1]= p [\gamma_0] + q [\nu]$. Then $X$ admits a $\partial$-strong Birkhoff section $\Sigma$ with binding $\nu$ (covered $-q$ times, where $\nu$ is oriented by the flow) and such that $\Sigma\cap \{t=1\}=\gamma_1$ and $\Sigma \cap \{t=0\}$ is given by $p$ parallel copies of $\gamma_0$.
\end{prop}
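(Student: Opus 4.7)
The plan is to construct $\Sigma$ as a union $\Sigma = C_0 \cup S^* \cup C_1$, where $C_0$ and $C_1$ are product cylinders in the slabs $T^2 \times [0, t^* - \epsilon]$ and $T^2 \times [t^* + \epsilon, 1]$ respectively, and $S^* \subset T^2 \times [t^* - \epsilon, t^* + \epsilon]$ is the ``helix box'' that absorbs the slope of $X$ passing through $[\nu]$ at $t = t^*$. Because $[\gamma_0]$ and $[\nu]$ generate $H_1(T^2; \Z)$, we can apply a $T^2$-diffeomorphism realizing an element of $SL(2, \Z)$ so that in coordinates $(x, y)$ one has $[\gamma_0] = (1, 0)$ and $[\nu] = (0, 1)$, whence $[\gamma_1] = (p, q)$. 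By $T^2$-invariance, $X = a(t) \partial_x + b(t) \partial_y$ on the whole product chart, with $a(t^*) = 0$ and $b(t^*) \neq 0$ since $\nu$ points in the $(0, 1)$-direction.

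The lateral pieces are straightforward. For $C_0$ fix $p$ parallel $\partial_y$-translates $\gamma_0^{(1)}, \dots, \gamma_0^{(p)}$ of $\gamma_0$ and take the product cylinders $\gamma_0^{(j)} \times [0, t^* - \epsilon]$; hypothesis (1) ensures that the slope of $X$ never equals that of $\gamma_0$ on the slab $[0, t^*]$, so each cylinder is transverse to $X$. Analogously, $C_1 = \gamma_1 \times [t^* + \epsilon, 1]$ is transverse. The bottom face of $C_0$ gives the required $p$ copies of $\gamma_0$ at $t = 0$, and the top face of $C_1$ gives $\gamma_1$ at $t = 1$.

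The core of the proof is the construction of the helix box $S^*$. Set $s = t - t^*$. After a smooth reparametrization of $s$ (and rescaling of $y$), the $T^2$-invariance of $X$ reduces the transversality analysis to the model $X = s \partial_x + \partial_y$. Introduce polar coordinates $(r, \theta)$ in the normal $(x, s)$-plane around $\nu = \{x = s = 0\}$ via $x = r \cos \theta$, $s = r \sin \theta$. For any $\theta_0 \notin \{0, \pi\}$ the half-cylinder $\{\theta = \theta_0,\ 0 \leq r \leq r_0\} \times (\R/\Z)_y$ is transverse to $X$ on $r > 0$ and limits onto $\nu$ as $r \to 0$, giving the local Birkhoff page at the binding. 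The helix box will be $q$ such pages at distinct angles $\theta_1, \dots, \theta_q$, with signs consistent with the coorientation of $X$, glued to an outer piece in $\{r \geq r_0\} \cap (T^2 \times [-\epsilon, \epsilon])$ that interpolates between $p$ parallel copies of $\gamma_0$ at $s = -\epsilon$ and a single slope-$q/p$ sheet at $s = +\epsilon$, wrapping $q$ times around the tube $\{r = r_0\}$. Concretely, the outer piece can be written as the immersion $\Phi(u, v) = (u \bmod 1,\ f(u, v) \bmod 1,\ v)$ on $[0, p] \times [-\epsilon, \epsilon]$ with $f(u, -\epsilon) = 0$ and $f(u, \epsilon) = qu/p$; the transversality Jacobian is $\det(X, \Phi_u, \Phi_v) = v f_u - 1$, nowhere zero for $\epsilon$ small. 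The combinatorial consistency, matching the $q$ inner pages to the outer surface, is enforced by choosing $f$ so that its crossings with $\nu$ at $v = 0$ are precisely the points to be opened into the pages.

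Gluing $C_0 \cup S^* \cup C_1$ along $\{t = t^* \pm \epsilon\}$ and smoothing across the interfaces yields an immersed transverse surface $\Sigma$ with embedded interior, whose boundary on $\partial(T^2 \times I)$ consists of $p$ copies of $\gamma_0$ and of $\gamma_1$, and whose internal boundary is $|q|$ copies of $\nu$ (the sign giving the orientation making $\partial \Sigma$ null-homologous in $T^2 \times I$, as forced by the relation $[\gamma_1] - p[\gamma_0] = q[\nu]$). For the Birkhoff section property, note that on each torus $T^2 \times \{t\}$ the flow $X|_t$ is linear and transverse to the finite family of arcs $\Sigma \cap (T^2 \times \{t\})$, so it returns to $\Sigma$ in time uniformly bounded in $t$; compactness of $[0, 1]$ then gives a global bound. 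The main obstacle is the surgery step: ensuring that the winding of the outer parametrization around the tube $\{r = r_0\}$ equals $q$, and that transversality and the interior-embedding condition persist across the stitching to the $q$ inner pages. This is where the explicit helix structure enters and is the technical heart of the construction.
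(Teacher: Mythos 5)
Your overall decomposition $\Sigma = C_0 \cup S^* \cup C_1$, the reduction to a thin slab $T^2\times[t^*-\varepsilon,t^*+\varepsilon]$ where $X$ is nearly parallel to $\nu$, and the introduction of polar coordinates $(r,\theta)$ around $\nu$ in the $(x,t)$-plane all match the structure of the paper's proof (which works in the same slab and parametrizes the surface using polar coordinates $(r,\theta)$ centred at $\nu$). The transversality check for the inner half-cylinders $\{\theta=\theta_0\}$ and for the sheet $\Phi$ (the determinant computation $vf_u-1$) are correct as far as they go.

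However, the construction of the helix box $S^*$ itself -- which is the entire content of the proposition -- has two genuine gaps, and you acknowledge the second yourself. First, the proposed outer parametrization $\Phi(u,v)=(u\bmod 1,\, f(u,v)\bmod 1,\, v)$ on $[0,p]\times[-\varepsilon,\varepsilon]$ with $f(u,-\varepsilon)=0$ does not produce $p$ disjoint parallel copies of $\gamma_0$ at the bottom face: since $f(\cdot,-\varepsilon)$ is constant and $u$ is taken mod $1$, $\Phi(\cdot,-\varepsilon)$ traces the single circle $\{y=0,\,s=-\varepsilon\}$ with multiplicity $p$. The resulting map fails to have embedded interior and does not match the cylinders $C_0$, which sit at $p$ distinct $y$-levels. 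Fixing this requires a parametrization that opens these $p$ sheets apart in $y$ as $v$ decreases, which is not compatible with a single rectangle domain with $u\bmod 1$. Second, the ``combinatorial consistency'' of attaching $q$ inner pages at angles $\theta_1,\dots,\theta_q$ to the outer sheet -- i.e.\ showing that the outer sheet's winding about the tube $\{r=r_0\}$ is exactly $q$ and that the attachment can be done keeping transversality and embedded interior -- is precisely the hard step, and you leave it as a claim (``this is where the explicit helix structure enters and is the technical heart''). As written, this is a statement that the construction exists rather than a construction.

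The paper resolves both points by avoiding the inner/outer split altogether: it builds the surface globally from $q$ explicit piecewise-linear curves $\lambda_1,\dots,\lambda_q$ on $\partial I^3$ (following segments of $\gamma_1$ on the top face, segments of $\gamma_0$ on the bottom face, and vertical segments on the lateral faces), each parametrized so that the angle $\theta(s)$ around $\nu$ is strictly monotone, and then spanning each $\lambda_i$ by the explicit map $\phi_i(s,\rho)=\big((1-\rho)s+\rho y(s),\,\rho r(s),\,\theta(s)\big)$. This single formula simultaneously produces the $p$ copies of $\gamma_0$ on $\{t=0\}$, the curve $\gamma_1$ on $\{t=1\}$, and the $q$ pages along $\nu$, and transversality to $\partial_y$ follows immediately from $\theta'(s)>0$ and $r(s)>0$. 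If you want to salvage your inner/outer approach you would need to write down an interpolating surface that genuinely separates the $p$ bottom curves and show the winding count explicitly; at that point you will likely be reinventing the paper's $\lambda_i$-curves.
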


{Since $\gamma_0$ and $\gamma_1$ are connected curves, the coefficient $q$ in the theorem cannot be equal to zero unless $(p,q)=(1,0)$ contradicting part (3) of the hypothesis.} If $[\gamma_0]=[\gamma_1]$, the vector field admits an annulus-like section without boundary components in $T^2\times (0,1)$, justifying hypothesis (3). In particular, we have that in the relation $[\gamma_1]= p [\gamma_0] + q [\nu]$ both coefficients are non-zero. 

\begin{proof}
{Notice that because $\gamma_0$ and $\gamma_1$ are connected, their homology classes are primitive.} In a small neighborhood $V=T^2\times [t^*-\varepsilon,t^*+\varepsilon]$ of $T^2 \times \{t^*\}$, the  vector field $X$ has slope close to the one at $T^2\times \{t^*
\}$, where $\nu$ is a periodic orbit of $X$. Hence the surfaces $\gamma_0\times I$ and $\gamma_1 \times I$ are  transverse to $X$ in $V$. We will construct a Birkhoff section in $V$ that intersects $T^2\times\{t^*-\varepsilon\}$ in $p$ parallel copies of the curve $\gamma_0 \times \{t^*-\varepsilon\}$ and intersects $T^2\times\{t^*+\varepsilon\}$ in the curve $\gamma_1\times \{t^*+\varepsilon\}$. In particular, this Birkhoff section can be extended to a Birkhoff section to all $T^2\times I$ trivially with $p$ parallel copies of the surface $\gamma_0\times [0,t^*-\varepsilon]$ and the surface $\gamma_1 \times [t^*+\varepsilon,1]$.

{Since the homology classes of $\gamma_0$ and $\nu$ are assumed to generate $H_1(T^2;\mathbb{Z})$, we call the direction of $\nu$ the vertical direction and the direction of $\gamma_0$ the horizontal direction.} In $V$ the vector field is almost constant, thus we simplify the situation by considering: $X$ to be a constant vector field parallel to the vertical direction. Finally, we identify $V$ with a cube $I^3$ with coordinates $(x,y,t)$. This cube identifies with $V$ under the relations $(0,y,t) \sim (1,y,t)$ and $(x,0,t) \sim (x,1,t)$ as 
$$\bigslant{I^3}{\sim} \cong V\cong T^2\times I.$$
{In the coordinates $(x,y,t)$, we set  $\nu$ to be the vertical} curve $\{x=1/2,t=1/2\}$ (in particular we have $t^*=1/2$ and the vector field is parallel to $\pp{}{y}$ in $V$ by our assumption above) and $\gamma_0$ the curve $\{y=0\}$ and $\{t=0\}$. 
The closed curve $\gamma_1$ in $T^2\times \{1\}$ can be assumed to be a linear constant slope curve up to isotopy. Recall that
$[\gamma_1]=p[\gamma_0]+ q[\nu]$,
for some integers $p,q\in \mathbb{Z}$. 

\begin{lemma}\label{lem:Birk}
Assume that $[\gamma_1]=p [\gamma_0] + q [\nu]$ is a primitive homology class in $H_1(T^2;\mathbb{Z})$. With the notation above, the vector field $X$ admits a Birkhoff section $\Sigma$ in $V$ whose boundary is $-q$ times the periodic orbit $\nu=\{x=1/2, t=1/2\}$ and such that $[\Sigma\cap \{t=0\}] =p [\gamma_0]$ and $\Sigma \cap \{t=1\}=\gamma_1$.
\end{lemma}

The idea of the proof is to construct suitable curves in the boundary of $I^3$ and then use them to span a surface whose boundary is $-q\nu$ and curves in $\partial I^3$, see Figure~\ref{Fig:Birk}. Finally, we quotient to obtain a Birkhoff surface in $V$.

\begin{proof}
Let us assume that $q>0$. Up to a translation, we might assume that the curve $\gamma_1$ intersects the point $(0,0,1)$.  Let $H=\{y=0, t=1\}$ be the horizontal bottom side of the square $I^2\times \{t=1\}$, and $V=\{x=1, t=1\}$ be the vertical right side of the same square. The curve $\gamma_1$ is represented in $I^2\times \{t=1\}$ by $p+q-1$ disjoint segments.

We will now construct $q$ oriented piecewise linear curves $\lambda_i$ in the boundary of $\partial I^3$, with $i=1,2,\ldots,q$. We proceed iteratively as follows. To construct $\lambda_1$, we first consider a segment that starts at $(1/2,0,0)$ and follows the boundary of $I^3$ along $\{y=0,t=0\}$ up to $(0,0,0)$. At this point, the curve $\lambda_1$ continues along the segment that goes from $(0,0,0)$ along $\{x=0, y=0\}$ up to $(0,0,1)$. The curve continues by following the only segment of $\gamma_1$ that contains $(0,0,1)$. We have to consider several cases. 
\begin{enumerate}
    \item[(1)] If the other endpoint of this segment of $\gamma_1$ intersects the top horizontal side of $I^2\times\{t=1\}$ at a point $(\widetilde x,1 ,1)$ with $\widetilde x\neq 1$, the curve $\lambda_1$ finishes at this point (see for example the left curve of the right picture in Figure \ref{Fig:seg}).
    \item[(2)] If the segment of $\gamma_1$ intersects $V$ along a point $(1, \widetilde y, 1)$ with $\widetilde y\neq 1$, we continue the construction of $\lambda_1$ with the horizontal line segments connecting successively the points $(1,\widetilde y, 1)$, $(1,\widetilde y, 0)$, $(0,\widetilde y,0)$ and $(0,\widetilde y, 1)$ (see for example the left picture of Figure \ref{Fig:seg}). In this case, the construction of the curve $\lambda_1$ is not yet finished.
    \item[(3)] If the segment of $\gamma_1$ intersects $V$ at the point $(1, 1, 1)$, we continue $\lambda_1$ from $(1,1,1)$ through horizontal line segments  from $(1,1,1)$ to $(1,1,0)$ and from there to $(1/2,1,0)$, finishing the construction of $\lambda_1$.
\end{enumerate}  
At this point, if the construction of $\lambda_1$ is not finished we were in case $(2)$. We continue the construction of $\lambda_1$ by repeating the process with the segment of $\gamma_1$ starting at $(0,\widetilde y,1)$. Eventually, the curve $\lambda_1$ reaches the top horizontal side of $I^2\times\{t=1\}$ in cases $(1)$ or $(3)$, ending its construction. 

If there is a point $(\widetilde x, 0, 1)\in H$ in $\gamma_1$ immediately to the right of $(0,0,1)$ along $H$, we construct $\lambda_2$ starting at $(\widetilde x, 0, 1)$ and applying the same recipe as before: we take the line segment of $\gamma_1$ starting at $(\widetilde x, 0, 1)$ and continue according to cases $(1), (2)$ or $(3)$ (see for example the right curve in the right picture of Figure \ref{Fig:seg}).

At the end of this process we have $q$ oriented curves $\lambda_i$, with $\lambda_1$ starting at $(1/2,0,0)$ and $\lambda_q$ ending at $(1/2,1,0)$.

 \begin{figure}[!h]
 \begin{center}
\begin{tikzpicture}
     \node[anchor=south west,inner sep=0] at (0,0) {\includegraphics[scale=0.08]{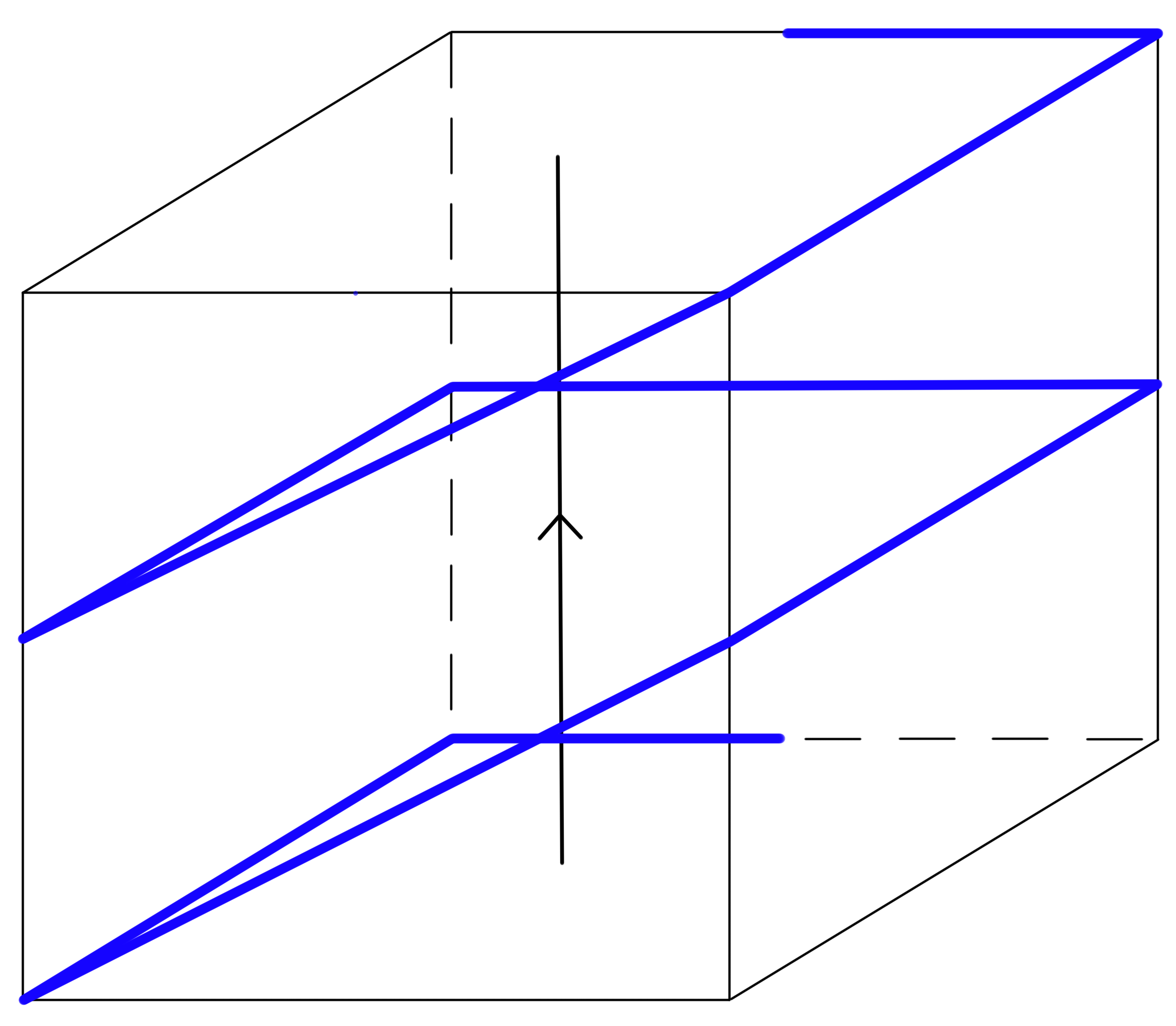}};
        \node[anchor=south west,inner sep=0] at (6.5,0.1) {\includegraphics[scale=0.08]{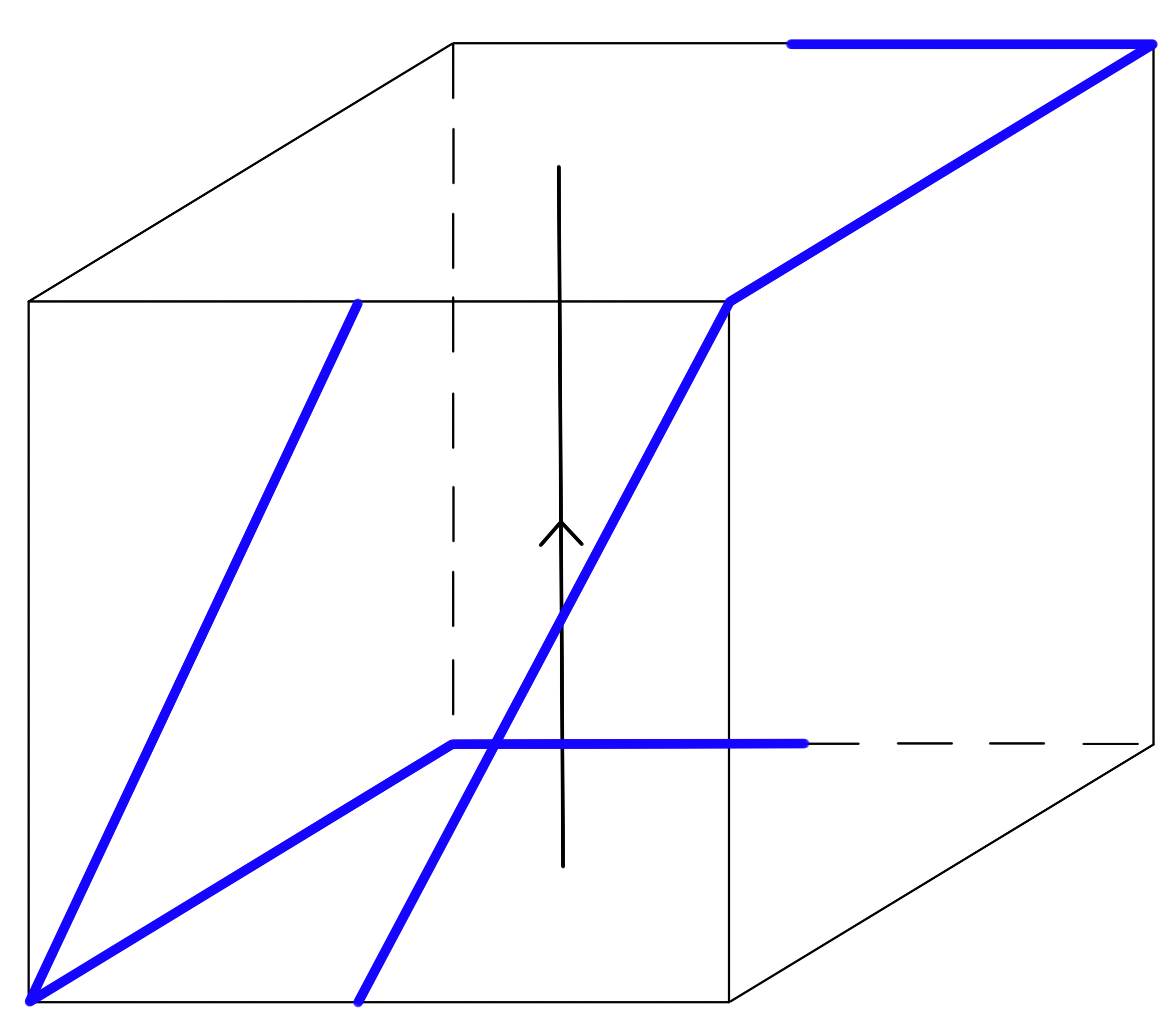}};
    \node at (3.1,3.62) {$V$};
    \draw (3.06,3.07)--(3.06,3.38);
    
    \node at (3.7,0.1) {$H$};
    \draw (3.07,0.12)--(3.4,0.12);
    
    \draw[->] (5.55,1.2)--(5,0.78);
    \node[scale=0.8] at (5.25,0.65) {$t$};

    \draw[->] (5.55,1.2) -- (6.2,1.2);
    \node[scale=0.8] at (6.2, 0.92) {$x$};

    \draw[->] (5.55,1.2) -- (5.55,1.85);
    \node[scale=0.8] at (5.8, 1.85) {$y$};
    
    \node [circle, fill=black, scale=0.35] at (3.25,1.22) {};
    \node[scale=0.7] at (3.65,0.95) {$(1/2,0,0)$};
    
    \node [circle, fill=black, scale=0.35] at (0.12,0.12) {};
    \node[scale=0.7] at (-0.3,-0.15) {$(0,0,1)$};
\end{tikzpicture}
\end{center}
\caption{The curves $\lambda_i$ for $i=1,2,\ldots,q$, in the cases $p=2,q=1$ on the left hand side  and $p=1,q=2$ on the right hand side.
The left hand side corresponds to an example that requires a single curve $\lambda_1$ whose construction goes first through case $(2)$ and then through case $(3)$. The right hand side corresponds to an example where the construction produces two segments: the construction of $\lambda_1$ goes through case $(1)$ and the construction of  $\lambda_2$ starts at $(1/2,0,1)$ and goes through case $(3)$.}
 \label{Fig:seg}
 \end{figure}

Let $(r,\theta)$ be polar coordinates of the square $\{y=0\}$ centered at $\{x=1/2,t=1/2\}$. The  key feature of each path $\lambda_i$ is that it can be parametrized as
$$ \lambda_i (s)= (y(s), r (s), \theta(s)), \enspace s\in [0,1] $$
in a way that $\theta'(s)>0$ for all $s$. Construct for each $\lambda_i$ a surface with boundary $\Sigma_i$ in $I^3$ parametrized by $\phi_i: I^2 \to I^3$ with
\begin{equation}\label{eq:surface}
 \phi_i (s,\rho)=((1-\rho)s+\rho y(s), \rho r(s), \theta(s)).
 \end{equation}
 
  \begin{figure}[!h]
 \captionsetup{justification=centering}
 \begin{center}
\begin{tikzpicture}
     \node[anchor=south west,inner sep=0] at (0,0) {\includegraphics[scale=0.08]{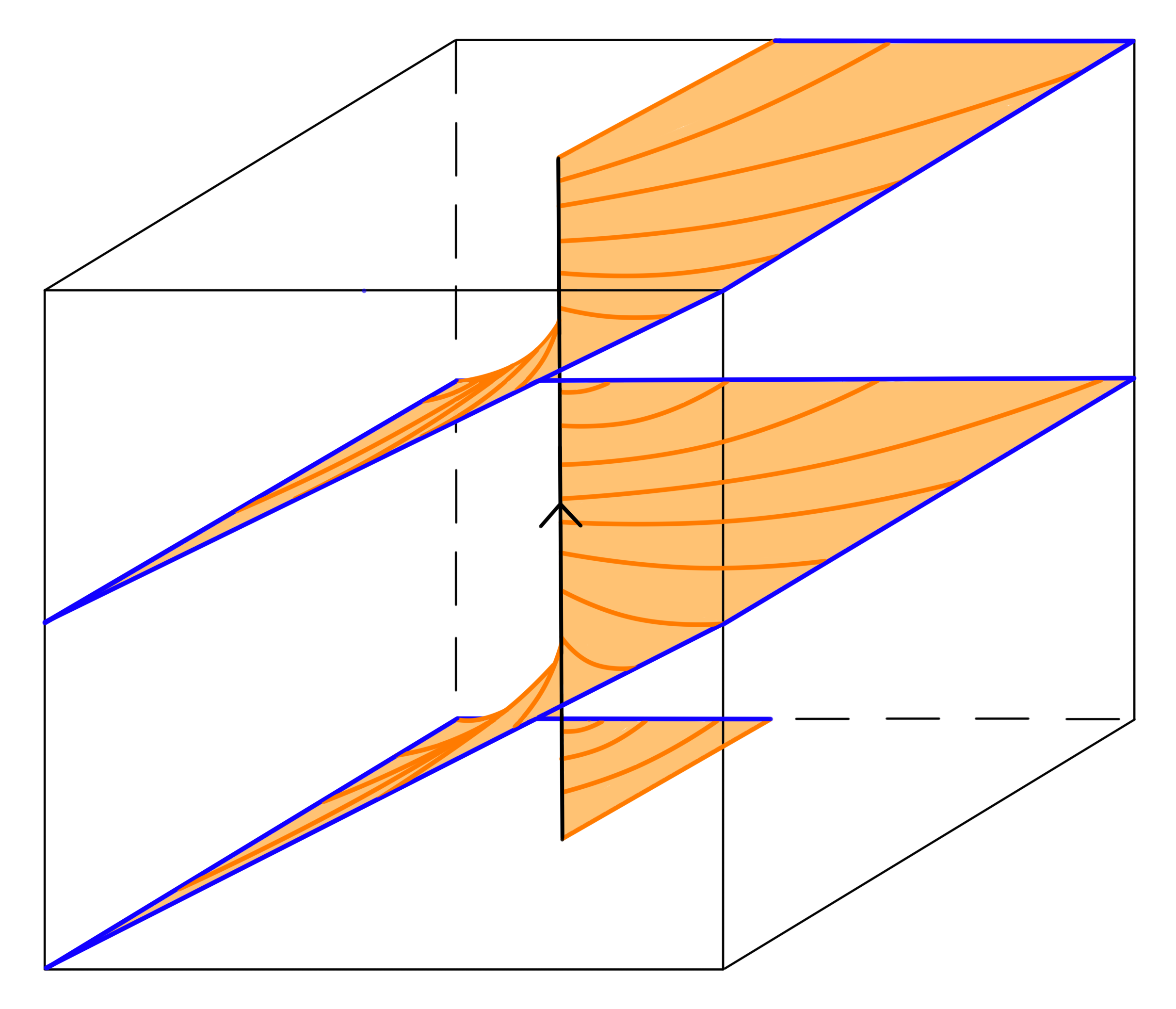}};
        \node[anchor=south west,inner sep=0] at (6.5,0) {\includegraphics[scale=0.08]{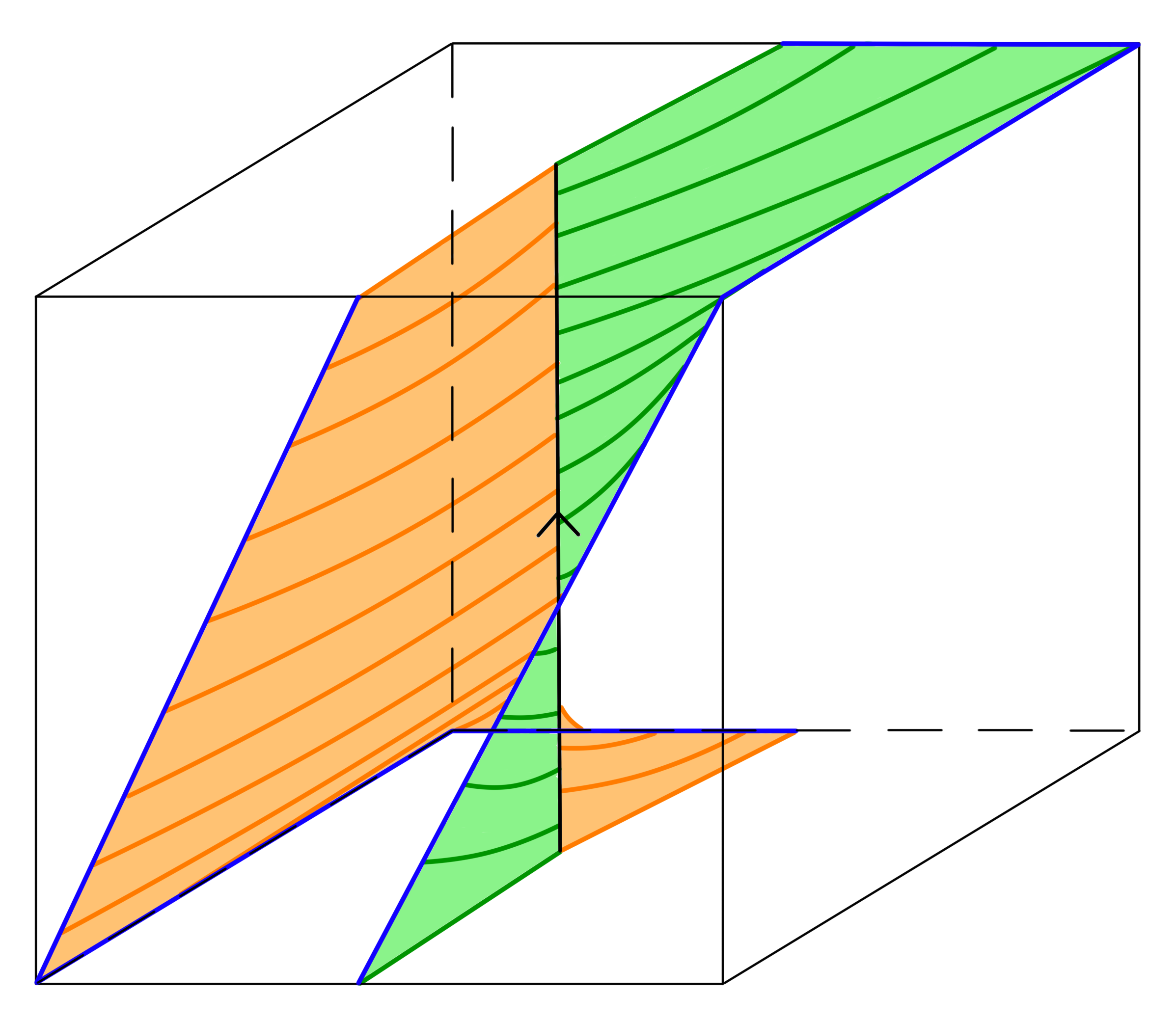}};
\end{tikzpicture}
\centering
\caption{Surfaces for $p=2,q=1$ and $p=1,q=2$}
 \label{Fig:Birk}
\end{center}
 \end{figure}
 
\noindent Choosing well the parameter $s$ of each $\lambda_i$, we can achieve that the surfaces $\Sigma_i$ are pairwise disjoint except along the boundary segment $\{x=1/2,t=1/2\}$.  One option is to choose a parameter $s$ that varies a small quantity when the curve $\lambda_i(s)$ moves along a plane of fixed $y$ coordinate, and $s$ varies approximately as the $y$ coordinate along each segment of $\gamma_1$.

The surfaces $\Sigma_i$ are well defined in the quotient space of $T^2\times I$, and we obtain a continuous surface $\Sigma$ in $T^2\times I$ whose boundary $\partial \Sigma$ is given by $\gamma_1 \subset \{t=1\}$, $p$ curves in $\{t=0\}$ parallel to $\gamma_0$, and the central orbit of the flow given by $\{x=1/2,t=1/2\}$ covered $-q$ times, where we oriented the orbit by the direction of the flow and $\Sigma$ by the unique orientation that makes $X$ positively transverse to its interior. We can easily construct surfaces $\Sigma_i$ that induce a $C^\infty$-immersed surface $\Sigma$ by considering smooth parametrizations of the general form $\widetilde \phi_i(s,\rho)=\left( f_1(s,\rho), f_2(s,\rho), \theta(s) \right)$. It is embedded except maybe along $\{x=1/2,t=1/2\}$ where it is embedded only if $q=1$. The parametrization $\phi_i$ satisfies
\begin{align*}
d\phi_i\left(\pp{}{\rho}\right)&=(y(s)-s)\pp{}{y} + r(s) \pp{}{r} \\
 d\phi_i \left(\pp{}{s}\right) &= (1-\rho+\rho y'(s))\pp{}{y} + \rho r'(s) \pp{}{r} + \theta'(s)\pp{}{\theta},
 \end{align*}
 and since $r(s)$ and $\theta'(s)$ do not vanish in the complement of $\{x=1/2,t=1/2\}$, we deduce that for $\rho \neq 0$, the image of $d\phi$ never contains $\pp{}{y}$.

Note that if $q<0$, the only difference is that we need to adapt the construction with curves $\lambda_i$ that satisfy $\theta'(s)<0$.
\end{proof}

Back to the proof of Proposition~\ref{prop:helix}, we only need to consider the constructed Birkhoff section inside $U=T^2\times [t^*-\varepsilon,t^*+\varepsilon]$ and extend this surface trivially to $T^2\times I$.\\

Let us argue why the section is $\partial$-strong.  Consider the coordinates used in the proof of Lemma~\ref{lem:Birk}. The Poincar\'e map of the periodic orbit $\nu$ is $(x,t)\mapsto (x+f(t), t)$ for some function $f$. We can blow up the periodic orbit $\nu$.  The vector field $X_\nu$ has two periodic orbits $\nu_1$ and $\nu_2$ (that are vertical, parallel to the $y$ direction) that bound two cylinders. Observe that these periodic orbits correspond to the invariant torus that contains $\nu$. In the cylinders bounded by $\nu_1$ and $\nu_2$, the slope of $X_\nu$ at a point of the torus depends on the differential of $f(t)$ in the direction orthogonal to $\nu$ defined by the chosen point. The vector field $X_\nu$ is never horizontal but could be close to horizontal near the center circles of the annuli bounded by $\nu_1$ and $\nu_2$.

Let $S$ be the Birkhoff section obtained by extending the construction in Lemma~\ref{lem:Birk} and $S_\nu$ the induced section after blowing up $\nu$. A priori, there might be points where $S_\nu$ is tangent to $X_\nu$, but $X_\nu$ is never negatively transverse to $S_\nu$. Up to isotopy, we might deform the section near the binding by an isotopy so that these tangencies no longer occur. To do this, we deform the section $S_\nu$ to a new transverse section that is almost vertical near the orbit $\nu_1$ and almost horizontal (at least more horizontal than $X_\nu$ at any point) along some circle transverse to $\partial_t$.

\end{proof}

An example of the last step in the previous explanation is represented in Figure \ref{fig:defsec}, which represents the blow-up at the binding orbit. In this case, the section defines a curve that turns once in the vertical direction and once in the horizontal one, on the left, there are tangency points and on the right there are none.

\begin{figure}[!h]
 \captionsetup{justification=centering}
 \begin{center}
\begin{tikzpicture}
     \node[anchor=south west,inner sep=0] at (0,0) {\includegraphics[scale=0.7]{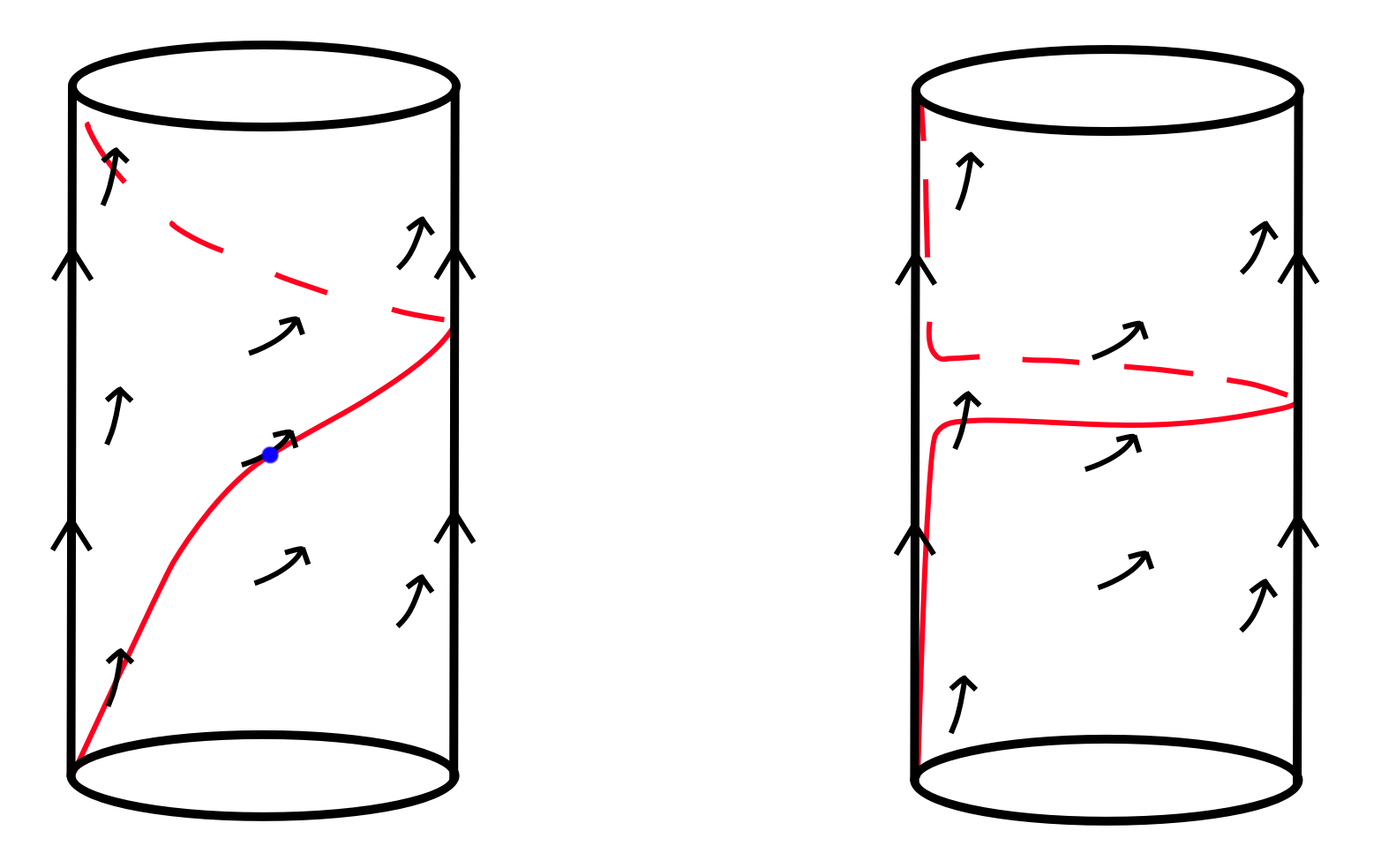}};
          \draw[->] (4.1,3)--(5.1,3);
          \node [color=red] at (2.3,2.5) {$S_\nu$};
          \node at (1.3,1.3) {$X_\nu$};
          \node at (-0.05,3) {$\nu_1$};
           
     \end{tikzpicture}
\centering
\caption{Making a section $\partial$-strong: the tangency points (in blue) are removed by an isotopy of the section.}
 \label{fig:defsec}
\end{center}
 \end{figure}

\begin{Remark}\label{rem:Multcurv}
If $\gamma_1$ is a finite number of parallel copies of a closed curve of primitive homology class, we can take parallel copies of the surface $\Sigma$ constructed in Lemma~\ref{lem:Birk}. This implies that Proposition~\ref{prop:helix} also holds stated as follows: Write $[\gamma_1]=\ell [\gamma_1']$ for a primitive curve $\gamma_1'$, $\ell \in \mathbb{Z}$, and $[\gamma_1']=p[\nu] + q [\gamma_0]$. There is a Birkhoff section $\Sigma$ such that $\Sigma\cap \{t=0\}$ is given by $\ell p$ parallel copies of $\gamma_0$, $\Sigma\cap \{t=1\}$ is given by $\gamma_1$, and the binding of $\Sigma\cap \{t=1\}$ is $\nu$ covered $-\ell q$ times.
\end{Remark}

\begin{Remark}\label{rem:embDeh}
Similar surfaces were first considered in \cite{VHM} in the context of open book decompositions for contact structures. They were later used in dynamical systems by Dehornoy \cite{Deh, Deh2} to classify global surfaces of section of the geodesic flow on the flat torus. His construction uses several binding orbits instead of one, and yields surfaces that are embedded up to the boundary (instead of surfaces that are only immersed along the boundary, as in our construction). Those are constructed by piling up vertically and horizontally copies of the surface obtained in Lemma~\ref{lem:Birk} when choosing a linear curve $\gamma$ with $p=1,q=1$. Using this approach, one obtains an embedded Birkhoff section, at the cost of needing several parallel binding components along the torus $t=\{1/2\}$.

 In \cite{Deh, Deh2}, the name {\it helix box} refers to the construction in Lemma~\ref{lem:Birk} for a curve $\gamma$ such that $p=1,q=1$. Since it is a natural generalization, we use the same name for the construction in Lemma~\ref{lem:Birk}, independently of the curve $\gamma$. We proceed to the proof of the main result of this section, obtained by concatenating suitable helix boxes.
 \end{Remark}

\begin{prop}\label{prop:BirkT2}
Let $X$ be a non-singular vector field in $T^2\times I$ that is $T^2$-invariant and whose slope is non-constant. Let $\Gamma_0,\Gamma_1$ be two families of oriented embedded closed curves such that $\Gamma_0 \subset T^2\times \{0\}$, $\Gamma_1\subset T^2\times \{1\}$, and such that $X|_{t=0}$ and $X|_{t=1}$ is respectively positively transverse to $\Gamma_0$ and $\Gamma_1$. Then there exists a $\partial$-strong Birkhoff section $\Sigma$ of $X$ such that $\Sigma \cap \{t=1\}=\Gamma_1$ and $\Sigma \cap \{t=0\}=\Gamma_0$.
\end{prop}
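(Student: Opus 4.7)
The strategy is to realize $\Sigma$ as a vertical stack along $I$ of helix boxes produced by Proposition~\ref{prop:helix} and Remark~\ref{rem:Multcurv}, glued along intermediate tori $T^2\times\{t_i\}$. Since $X$ is nonsingular and each $\Gamma_i$ is embedded and transverse to $X|_{T^2\times\{i\}}$, its connected components are pairwise disjoint and pairwise homologous, so $\Gamma_i$ consists of $m_i$ parallel copies of a primitive class $\alpha_i\in H_1(T^2;\mathbb{Z})$. Writing $X=a(t)\partial_x+b(t)\partial_y$ in $T^2$-invariant coordinates, the slope $s(t)=b(t)/a(t)\in\mathbb{R}\cup\{\infty\}$ is continuous and non-constant, hence its image is an interval of positive length, and in particular contains infinitely many rational values; each rational value picks out a torus foliated by periodic orbits of some primitive homology class, supplying the potential binding orbits for helix boxes.

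I would then choose a subdivision $0=t_0<t_1<\cdots<t_n=1$, intermediate primitive classes $\beta_0=\alpha_0,\beta_1,\ldots,\beta_n=\alpha_1$ with compatible multiplicities $n_0=m_0,n_1,\ldots,n_n=m_1$, and heights $t^*_i\in(t_i,t_{i+1})$ at which $X$ is periodic of some primitive class $\gamma_i$ such that the hypotheses of Proposition~\ref{prop:helix} hold in the slab $T^2\times[t_i,t_{i+1}]$: the pair $\{\beta_i,\gamma_i\}$ generates $H_1(T^2;\mathbb{Z})$ and the annuli $\beta_i\times[t_i,t^*_i]$ and $\beta_{i+1}\times[t^*_i,t_{i+1}]$ are transverse to $X$. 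Applying Proposition~\ref{prop:helix} together with Remark~\ref{rem:Multcurv} in each slab produces a local Birkhoff section with the prescribed multi-curves on top and bottom, and gluing these along the shared tori $T^2\times\{t_i\}$ yields $\Sigma$.

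The main obstacle is to find consistent intermediate data $(\beta_i,n_i,\gamma_i,t^*_i)$. The identity $\beta_{i+1}=p_i\beta_i+q_i\gamma_i$ of Proposition~\ref{prop:helix}, together with the multiplicity count of Remark~\ref{rem:Multcurv}, imposes the relation $n_{i+1}p_i=n_i$ and determines the binding multiplicity along $\nu_i$, so the sequences of primitive classes and multiplicities along the stack are tightly coupled. The freedom to meet these constraints comes from (i) the non-constant slope hypothesis, which provides rational slopes arbitrarily close to any target value in the image of $s$; (ii) the choice of sub-interval thicknesses, which keeps the range of $s$ on each slab small and makes the transversality of the intermediate annuli easy to arrange by picking $\beta_i$ of slope in the complement of this range; and (iii) the elementary fact that any primitive class in $H_1(T^2;\mathbb{Z})$ can be completed to a basis. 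Once these data are fixed, each local helix box is transverse to $X$ in its interior as in the proof of Lemma~\ref{lem:Birk} (where $d\phi_i$ never hits $X$), its top and bottom boundaries coincide with those of the adjacent slabs by construction, and the concatenated surface $\Sigma$ is a Birkhoff section of $X$ with boundary $\Gamma_0\cup\Gamma_1$ and finitely many additional binding orbits $\nu_i$ in the interior.
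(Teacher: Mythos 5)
There is a genuine gap. You correctly identify the essential obstruction — the recursion $n_i=n_{i+1}p_i$, together with the fact that the bottom curve and its multiplicity at each step are forced once the binding class is fixed, tightly couples the data down the stack — but you do not explain how to land exactly on $\Gamma_0$ at $t=0$. The three ``freedoms'' you list (density of rational slopes, thin slabs, completion to a basis) are indeed used in the paper's argument, but they are not by themselves sufficient: once the binding orbit $\nu_i$ is chosen, the bottom class $\beta_i$ is determined up to adding multiples of $[\nu_i]$ and the integer $p_i$ is completely determined by $\beta_{i+1}$ and $[\nu_i]$, so the accumulated multiplicity $n_0 = n_n\prod_i p_i$ cannot be freely steered to $m_0$ by merely varying which rational slopes you pick. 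A plan that descends slab by slab from $\Gamma_1$ must prove that the class and multiplicity reached at $t=\varepsilon$ can actually be corrected to $\Gamma_0$, and that is precisely the step your proposal omits.

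The paper resolves this by a specific two-box correction near $t=0$ that your proposal does not contain. One fixes a short collar $[0,\varepsilon]$ on which the slope of $X$ is monotone and picks two rational tori at $0<s_1<s_2<\varepsilon$ with primitive orbit classes $b_1$ and $b_2=Nb_1+[\gamma_0]$ for some large $N$, where $\gamma_0$ is the primitive class of $\Gamma_0$; the monotonicity of the slope is what makes such a $b_2$ available. One then arranges (by the choice of the curve $\sigma_1$ at $t=\varepsilon$, constrained to satisfy $[\sigma_1]=p'b_1+q'[\gamma_0]$ with $p'<0$ and $q'\geq r$) that the class $[\Gamma_\varepsilon]=pb_1+q[\gamma_0]$ reached by the top-down construction has $p<0$ and $q\geq r$. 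The algebraic identity
$$[\Gamma_\varepsilon]=[\Gamma_0]+\bigl(p-(q-r)N\bigr)b_1+(q-r)b_2$$
then yields an intermediate curve $\Gamma_{\varepsilon'}$ at some $\varepsilon'\in(s_1,s_2)$ with $[\Gamma_{\varepsilon'}]=[\Gamma_0]+(p-(q-r)N)b_1$, and Proposition~\ref{prop:helix} is applied twice, once with binding at $s_2$ (absorbing the $b_2$-component) and once with binding at $s_1$ (absorbing the $b_1$-component), to reach $\Gamma_0$ exactly with multiplicity $r=m_0$. Moreover, the transversality of the annulus over $\Gamma_{\varepsilon'}$ on $[s_1,\varepsilon]$ — which you would need but do not justify — is verified by a determinant computation that relies precisely on the sign constraints $p<0$, $q\geq r$. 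Without this final correction (and the sign bookkeeping that makes it transverse), the proposal does not establish that the concatenated surface hits $\Gamma_0$ on the nose, so the proof is incomplete as written.
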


\begin{proof}
We might assume that $\Gamma_0$ and $\Gamma_1$ are just parallel copies of linear closed curves $\gamma_0, \gamma_1$ that have primitive integer homology classes in $H_1(T^2;\mathbb{Z})$, respectively. In homology, we have $[\Gamma_0]=r[\gamma_0]$ and $[\Gamma_1]=\ell[\gamma_1]$. As before, we can consider these curves to be linear constant slope curves in a torus.

Let $k(t)$ denote the slope of the vector field $X$ on $T^2\times \{t\}$, with respect to a coordinate system $(x,y,t)$ in $T^2\times I$. Let $\varepsilon>0$ be such that the slope $k(t)$ is not constant in $[0,\varepsilon]$ and in $[\varepsilon, 1]$, varies only by a small value and satisfies $k'(t)\geq 0$ for $0\leq t\leq \varepsilon$ (if $k'(t)\leq 0$, an analogous argument works). In particular, we can assume that $\Gamma_0\times [0, \varepsilon]$ is transverse to $X$. Choose two primitive integer homology classes $b_1, b_2$ of $T^2$ represented by two positively oriented periodic orbits of the flow on two tori $\{t=s_1\}$ and $\{t=s_2\}$ where $X$ has rational slope and such that $0<s_1<s_2<\varepsilon$. We might further impose that $b_1,b_2$ are generators of $H_1(T^2;\mathbb{Z})$ and that $[\gamma_0],b_1$ are also generators of $H_1(T^2;\mathbb{Z})$. We can choose the coordinates $x,y$ in $T^2$ such that $\pp{}{x}$ is parallel to $\gamma_0$ and $\pp{}{y}$ is parallel to $b_1$. Since the slope of the Reeb vector field is of positive derivative for $t\in [0,\varepsilon]$, it follows that $b_2$ can be chosen such that $b_2=Nb_1+[\gamma_0]$ for some $N>>0$.

Fix a small number $\theta_0>0$ and set $t_0=0$. Partition the interval $[\varepsilon,1]$ into (possibly just one) subintervals 
$$[t_1,t_2], [t_2,t_3],..., [t_{n-1},t_n],$$
where $t_1=\varepsilon$ and $t_n=1$, so that in each interval the slope is non-constant and varies only by some amount smaller in absolute value than $\theta_0$.  We can take $\theta_0$ such that $\Gamma_1\times I$ is transverse to $X$  on $T^2\times [t_{n-1},1]$.  Construct a family of closed curves $\sigma_i$ with $i=1,...,n-1$ in $T^2\times \{t_i\}$ such that:
\begin{itemize}
\item[-] $[\sigma_1]=p'b_1+q'[\gamma_0]$ for some coprime integers such that $p'<0$ and $q'\geq r$, and $[\sigma_1],b_2$ are generators of $H_1(T^2;\mathbb{Z})$,
\item[-] $\sigma_i\times I$ is transverse to $X$ for $t\in[t_{i-1},t_{i+1}]$ for each $i=1,...,n-1$.
\end{itemize}
This is possible because the slope of the vector field does not vary more than $\theta_0$ in each interval, iterating choose $\sigma_i$ for $i=2,...,n-1$ to be a curve whose slope is approximately minus the inverse of the slope of the vector field in $T^2\times \{t_i\}$ (with respect to the coordinates $(x,y)$ in $T^2 \times I$). For $i=1$ there are many choices for $\sigma_1$. 

Having the curves $\sigma_i$ we choose, in each domain $T^2\times (t_i, t_{i+1})$, a periodic orbit of the flow whose homology class together with $[\sigma_i]$ gives a base of $H_1(T^2;\mathbb{Z})$. These will be the bindings of the helix boxes that we will construct in each $T^2\times [t_i, t_{i+1}]$.

We apply Proposition~\ref{prop:helix} first in $T^2\times [t_{n-1},t_n=1]$ where the curve in $T^2\times \{1\}$ is $\Gamma_1$ (by Remark~\ref{rem:Multcurv}, Proposition~\ref{prop:helix} applies even if $\Gamma_1$ is not connected). We find a $\partial$-strong Birkhoff section whose boundary along $t=1$ is $\Gamma_1$ and along $t=t_{n-1}$ is a finite collection of parallel copies of $\sigma_{n-1}$. Iteratively, we apply Proposition~\ref{prop:helix} to each domain $T^2\times [t_i,t_{i+1}]$ with $i=n-2,...,1$, choosing the boundary of the Birkhoff section constructed in $[t_{i+1},t_{i+2}]$ as the curve in $T^2\times \{t_{i+1}\}$ (it is given by a finite number of copies of $\sigma_{i+1}$), and $\sigma_i$ as the curve in $T^2\times \{t_i\}$. Again by Remark~\ref{rem:Multcurv}, there is no issue in considering multiple copies of a curve. After reaching $t=t_1=\varepsilon$, we constructed a $\partial$-strong Birkhoff section $\Sigma$ in $T^2\times [\varepsilon,1]$ that intersects $\{t=1\}$ along $\Gamma_1$, and intersects $\{t=\varepsilon\}$ along some finite number of parallel copies of $\sigma_1$. To simplify the notation, denote $\gamma_\varepsilon$ the curve $\sigma_1$, and $\Gamma_\varepsilon$ the finite collection of parallel copies of $\gamma_\varepsilon$ given by the intersection of the Birkhoff section with $\{t=\varepsilon\}$.  

By construction $[\gamma_\varepsilon]= p' b_1 + q' [\gamma_0]$,
for some coprime integers $p',q'$ satisfying $p'<0$ and $q'\geq r$. Since
$[\Gamma_\varepsilon]= s[\gamma_\varepsilon]=pb_1+q[\gamma_0]$ 
for some positive integer $s$, we have $p<0$ and $q\geq r$.
We can decompose the homology class of $\Gamma_\varepsilon$ as
 $$[\Gamma_\varepsilon]= [\Gamma_0] + (p-(q-r)N)b_1 + (q-r)b_2,$$
 using that $b_1,b_2$ are generators. In vectorial notation, we have $[\gamma_0]=(0,1)$, $b_1=(1,0)$, and the equality above is tantamount to
 $$ (p,q)= (0,r) + (p-(q-r)N)(1,0) + (q-r) (N,1). $$
 Choose some $\varepsilon'\in (s_1,s_2)$ and a set of closed (linear) curves $\Gamma_\varepsilon'$ in $T^2\times \{\varepsilon'\}$ such that $[\Gamma_{\varepsilon'}]=[\Gamma_0]+(p-(q-r)N)b_1$. We would like to apply Proposition~\ref{prop:helix} in $T^2\times [0,\varepsilon']$ and in $T^2\times [\varepsilon',\varepsilon]$, choosing as curves $\Gamma_0, \Gamma_{\varepsilon'}$ and with binding in $T^2\times \{s_1\}$, and $\Gamma_{\varepsilon'}, \Gamma_{\varepsilon}$ with binding in $T^2\times \{s_2\}$. The curves and bindings are schematically depicted in Figure~\ref{Fig:LongHelix}.
 
   \begin{figure}[!h]
 \captionsetup{justification=centering}
 \begin{center}
\begin{tikzpicture}
     \node[anchor=south west,inner sep=0] at (0,0) {\includegraphics[scale=0.08]{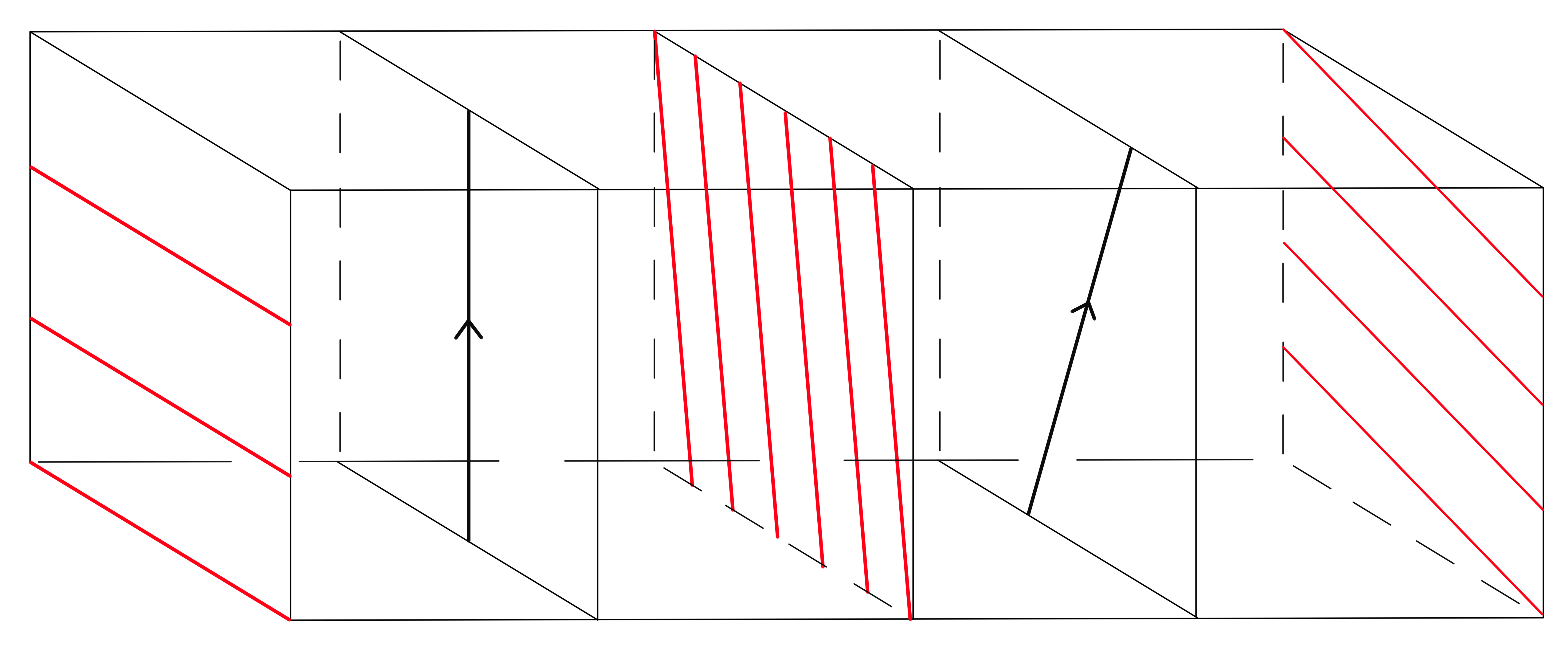}};
     
     \node[scale=0.8] at (1.8,-0.1) {$t=0$};
     \node[scale=0.8] at (4.3,-0.1) {$t=s_1$};
     \node[scale=0.8] at (6.5,-0.1) {$t=\varepsilon'$};  
     \node[scale=0.8] at (8.5,-0.1) {$t=s_2$};
     \node[scale=0.8] at (10.7,-0.1) {$t=\varepsilon$};     
     
     \node [color=red] at (-0.2,3.5) {$\Gamma_0$};
     \node [color=red] at (4.6,4.7) {$\Gamma_{\varepsilon'}$};     
     \node [color=red] at (9,4.7) {$\Gamma_{\varepsilon}$};
     
     \end{tikzpicture}
\centering
\caption{Last step in $T^2\times [0,\varepsilon]$}
 \label{Fig:LongHelix}
\end{center}
 \end{figure}
 
 To be able to do it, we need to check that $\Gamma_{\varepsilon'}\times I$ is transverse to the flow for $t\in [s_1,s_2]$.
 
 We claim that the condition $p<0, q>r$ ensures that $\Gamma_{\varepsilon'}\times I$ is transverse to the vector field for $t\in [s_1, \varepsilon]$. Recall that $s_1$ is such that in $T^2\times \{s_1\}$ the orbits of $X$ have homology $b_1$. The transversality follows from the fact that the vector field ``rotates" in the direction that increases the angle between $X$ and $\Gamma_{\varepsilon'}\times [s_1,\varepsilon]$ as $t$ increases. This is represented in Figure~\ref{Fig:LongHelix}: the vector field rotates clockwise as $t$ increases, becoming increasingly transverse to $\Gamma_{\varepsilon'} \times [s_1,\varepsilon]$. 
 
 Formally, the vector field along any torus $T^2 \times \{t^*\}$ with $t^*\in [s_1,\varepsilon]$ is of the form $X_{t^*}=m\pp{}{x} + n\pp{}{y}$ where $n,m$ are real numbers such that $n>>m>0$. On the other hand, the section is given by an integral curve of the vector field $Y=r\pp{}{x}+\big( p-(q-r)N\big) \pp{}{y}$. The determinant of the matrix whose columns are the coefficients of these vector fields is
$$\det \begin{pmatrix} 
r & m \\
 p-(q-r)N & n
\end{pmatrix}= rn-m\big( p-(q-r)N\big).$$
It follows from the fact that $p<0$ and that $q\geq r$ that this determinant is always positive, as claimed. 

We are now able to apply Proposition~\ref{prop:helix} in $T^2\times [0,\varepsilon']$, and construct a $\partial$-strong Birkhoff section of the flow such that along $t=0$ it defines the curves $\Gamma_0$ and at $t=\varepsilon'$ it defines $\Gamma_{\varepsilon'}$, and whose binding is a periodic orbit of the flow along $t=s_1$. The key fact in this step is that the homology of $\Gamma_{\varepsilon'}$ expressed in the base $\gamma_0, b_1$ has a coefficient $r$ in $\gamma_0$. This ensures that along $t=0$ the section can be chosen to coincide with $\Gamma_0$, i.e. with exactly $r$ copies of $\gamma_0$.

Finally, we have
 $$ [\Gamma_\varepsilon]= [\Gamma_{\varepsilon'}]+ (q-r)b_2,$$
 and the key fact is again that in the base $[\gamma_{\varepsilon'}], b_2$, both sets of curves have the same coefficient in $[\gamma_{\varepsilon'}]$. We apply again Proposition~\ref{prop:helix} to construct a $\partial$ Birkhoff section of the vector field in $T^2\times [\varepsilon',\varepsilon]$ with a binding component which is a closed curve of the flow in $\{t=s_2\}$. This Birkhoff section coincides with $\Gamma_{\varepsilon'}$ and $\Gamma_{\varepsilon}$ when intersected with $t=\varepsilon'$ and $t=\varepsilon$ respectively. 
 
 In the construction above we pasted together different local Birkhoff sections identifying parts of the boundaries that are far away from the binding orbits. Thus the construction gives a Birkhoff section that is piecewise smooth and smooth near the binding. Since being positively transverse to a vector field is an open condition, we can smooth the Birkhoff section finishing the proof Proposition~\ref{prop:BirkT2}.
\end{proof}

\begin{Remark}
Proposition~\ref{prop:BirkT2} holds as well if we want a global surface of section instead of a Birkhoff section (i.e. the surface is embedded even along the boundary). This can be done by using several bindings, see Remark~\ref{rem:embDeh}.
\end{Remark}

\subsection{Construction of broken books and Birkhoff sections} \label{sec:bbook}

Let us first recall the definition of broken book decomposition, introduced in \cite{CDR}.

\begin{defi}\label{defn:BBD}
A degenerate broken book decomposition of a closed $3$-manifold $M$ is a pair $(K,\mathcal{F})$ such that:
\begin{itemize}
\item[-] $K$ is a link, called the binding, 
\item[-] $\mathcal{F}$ is a smooth cooriented foliation of $M\setminus K$ such that each leaf $S$ of $\mathcal{F}$ is properly embedded in $M\setminus K$ and admits a compactification $\overline{S}$ in $M$ that we call a page. The boundary of $S$ is contained in $K$.
\item[-] $K=K_r\sqcup K_b$, respectively the radial and broken parts of the binding. Each $k\in K_r$ has a tubular neighborhood $\mathcal{U}$ in which $\mathcal{F}|_{\mathcal U}$ is a radial foliation: every leaf of $\mathcal{F}|_{\mathcal U}$ is an annulus with one of its  boundary components covering $k$ a certain number of times. Each $k\in K_b$, the broken part of the binding, has a tubular neighborhood $\mathcal{V}$ such that every leaf of $\mathcal{F}|_{\mathcal{V}}$ is an annulus, some are radial meaning that one of the boundary components covers $k$ and some of them have both boundary components in $\partial \mathcal{V}$. 
\end{itemize}
\end{defi}

The link $K$ in the above definition is not required to be oriented. Consider a connected component $k\in K$. Each leaf of $\mathcal{F}$ having a boundary component on $k$ induces an orientation on $k$ and if $k\in K_r$ different leaves induce different orientations on $k$ (see Remark~\ref{rem:orientation}).

A rational open book decomposition of a closed 3-manifold is given by a link $K$ and a fibration of $M\setminus K$ over $\mathbb{S}^1$. Notice that if the broken book decomposition has no broken components, it is a rational open book decomposition. In this case, $M\setminus K$ fibers over $\mS^1$ and the pages of the foliation $\mathcal{F}$ coincide with the fibers.

From Definition~\ref{defn:BBD}, we have that if $\mathcal{V}$ is atubular neighborhood  of a broken component  $k\in K_b$ the foliation $\mathcal{F}|_{\mathcal{V}}$ has two types of leaves: radial ones that are annuli with a boundary component in $k$ and the other boundary component in $\partial {\mathcal{V}}$; and {\it hyperbolic} ones that are annuli with both boundary components in $\partial {\mathcal{V}}$. 
Along a broken component $k$ of the binding, consider a disc transverse to $k$ and to $\mathcal{F}$ contained in the tubular neighborhood $\mathcal V$. We  call radial/hyperbolic sectors to the connected components of such a disc that belong to radial/hyperbolic leaves. Broken book decompositions, without the degenerate adjective, have an additional finiteness hypothesis on the sectors at each broken component of the binding.  

\begin{defi}\label{defn:BBD2}
    A broken book decomposition is a degenerate broken book decomposition such that  every $k\in K_b$ has exactly four hyperbolic sectors.
\end{defi}

\begin{Remark}
Radial sectors are closed. A radial sector might contain just one leaf of $\mathcal{F}|_{\mathcal{V}}$. Moreover, the definitions of radial and hyperbolic leaves are only local. 
\end{Remark}

There is a finite number of pages that do not belong to the interior of one-parameter families of homeomorphic pages, which are called {\it rigid pages}. The boundary of a rigid page must contain  broken binding components. Each connected component of the complement of the rigid pages fibers over $\mathbb{R}$ and the fibers can be taken to be the leaves of $\mathcal{F}$. 

\begin{defi}
    Given a vector field $X$ on a 3-manifold equipped with a broken book decomposition $(K,\mathcal{F})$, we say that the broken book carries (or supports) the vector field $X$ if the binding $K$ is composed of periodic orbits of $X$, while the other orbits are positively transverse to the leaves of $\mathcal{F}$.
\end{defi}

If a vector field $X$ admits a Birkhoff section $S$, then it is carried by an open book decomposition whose binding is $\partial S$ and whose pages are diffeormorphic to $S$.

\begin{Remark}
In Theorem~\ref{thm:bbook}, a periodic orbit of a vector field $X$ carried by a broken book decomposition that belongs to $K_b$ will always be non-degenerate and hyperbolic. In this case, it can be either positive or negative hyperbolic according to the sign of the eigenvalues of the linearised Poincar\'e map.

In the construction below, $K_b$ will be contained in the contact part of the Reeb vector field of a SHS, and hence the conclusion above follow from the contact non-degenerate hypothesis.
\end{Remark} 

\begin{Remark}\label{rem:orientation}
    If a periodic orbit $k$ of $X$ belongs to $K_b$, two radial leaves in adjacent radial sectors induce opposite orientations on $k$.
\end{Remark}
We cite two definitions from \cite{CDR} for a smooth non-singular vector field $X$ on a closed 3-manifold $M$. Recall that given a periodic orbit $\gamma$ of $X$, we denote by $\Sigma_\gamma$ the unit normal bundle~$(TM_\gamma/T\gamma)/\R_+$ to~$\gamma$ and by~$M_\gamma$ the normal blow-up of~$M$ along~$\gamma$.

\begin{defi}
Let $S$ be a (not necessarily connected) 
transverse surface with boundary.
\begin{itemize}
\item[-] An orbit~$\gamma$ of~$X$ is {\it asymptotically linking}~$S$ if for every $T\in\R$ the arcs~$\gamma([T, +\infty))$ and~$\gamma((-\infty, T])$ intersect $S$. 

\item[-] If $\gamma$ is a non-degenerate periodic orbit in $\partial S$, consider its unit normal bundle~$\Sigma_\gamma$. The {\it self-linking} of $\gamma$ with~$S$ is the rotation number of the extension of~$X$ to~$\Sigma_\gamma$, with respect to the 0-slope given by the closed curve $\partial_\gamma S$ in $\Sigma_\gamma$. 
\end{itemize}

\end{defi}

As for Birkhoff sections, we have a notion of $\partial$-strongness for broken books.

\begin{defi}\label{defn-bbddelta}
Consider a broken book decomposition $(K,\mathcal{F})$ of a closed 3-manifold $M$. A non-singular vector field $X$ on $M$ is $\partial$-strong carried by $(K,\mathcal{F})$ if it is carried by  $(K,\mathcal{F})$ and each leaf of $\mathcal{F}$ is a $\partial$-strong transverse surface.
\end{defi}

We proceed with the proof of the first part of Theorem~\ref{thm:mainBirk}.

\begin{theorem}\label{thm:bbook}
Let $(\lambda,\omega)$ be a SHS in $\mathcal{B}$ on a closed 3-manifold $M$ whose Reeb vector field is $X$. 
 Then there is a $C^2$-neighborhood of $(\lambda,\omega)$ in the set of SHS, such that every Reeb vector field of a SHS in this neighborhood is supported by a $\partial$-strong broken book decomposition. 
\end{theorem}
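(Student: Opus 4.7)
The plan is to apply the Structure Theorem \ref{thm:struc} to decompose $M$ into three kinds of regions, construct a broken-book-like structure adapted to each, and paste them together along the integrable cylinders using the helix boxes of Section \ref{sec:gluing}. Write $N = N_+ \sqcup N_- \sqcup N_0$ and $U = U_1 \sqcup \cdots \sqcup U_k$ with each $U_i \cong T^2 \times I$ carrying a $T^2$-invariant Reeb field whose slope function is non-constant, by the hypothesis $(\lambda,\omega) \in \mathcal{B}$. The common boundary $\partial N \cap \partial U$ is a collection of invariant tori along which the Reeb field is tangent and has constant linear slope.

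On each connected component $W$ of $N_{\pm}$ the Reeb field is the Reeb field of a non-degenerate contact form $\lambda|_W$ that is $T^2$-invariant in a collar of $\partial W$. The first step is to close $W$ by collapsing each boundary torus to a binding circle, exactly as in the proof of Lemma \ref{lem:reebpseudo}, obtaining a closed contact manifold $\widehat W$ on which $\lambda$ extends to a non-degenerate contact form whose Reeb flow agrees with $X$ in $\operatorname{int} W$. Apply \cite[Theorem 1.2]{CDR}  to $\widehat W$ to obtain a broken book carrying the Reeb flow; near each circle produced by collapsing, the pages are radial, hence undoing the collapse yields a finite family of embedded curves on the torus $T$ transverse to $X|_T$. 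On each component of $N_0$, the form $\widetilde\lambda$ is closed and defines a fibration over $\mS^1$ whose fibers are transverse surfaces to $X$ with boundary on $\partial N_0$; this gives, on every boundary torus of $N_0$, another family of curves transverse to $X$.

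Now on each $U_i$ the two boundary tori inherit such families of transverse curves, one from $N$ on each side. Proposition \ref{prop:BirkT2} then builds a Birkhoff section of $X|_{U_i}$ whose boundary matches the prescribed curves on $T^2 \times \{0,1\}$, with new binding orbits in the interior of $U_i$. This is precisely where the non-constant-slope hypothesis is used. Concatenating these local constructions with the pieces in $N_+, N_-, N_0$ produces a link $K \subset M$ and a smooth cooriented foliation of $M \setminus K$ whose local models agree with Definition \ref{defn:BBD}: the broken components of $K$ are exactly the hyperbolic periodic orbits of $N_c$ which already gave rise to broken leaves in $\widehat{\mathcal F}$, while all other binding components (those added by helix boxes and those coming from collapsed boundary tori) are radial.

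For the $C^2$-stability, let $(\lambda',\omega')$ be a $C^2$-small perturbation in the set of SHS. By \cite[Theorem 3.7]{CV1} there is a nearby structural decomposition with the same number of components, in which the contact part is a $C^2$-close non-degenerate contact form and the slope of the Reeb field in each integrable piece remains non-constant (an open condition in $C^2$). Hence \cite[Theorem 1.2]{CDR} again provides a broken book on each $\widehat W$, the fibration on $N_0$ persists after a cohomologically trivial perturbation of $\widetilde\lambda$, and Proposition \ref{prop:BirkT2} applies unchanged on each $U_i$ with the perturbed boundary curves. The main obstacle is the compatibility at the invariant tori $T \subset \partial N \cap \partial U$: one must verify that the curves produced by the broken-book construction on $N_\pm$ (respectively by the suspension structure on $N_0$) can actually serve as boundary data for a helix-box construction, i.e.\ that they are in general position, transverse to $X$ on $T$, and depend on the SHS in a way that is continuous enough to survive a $C^2$-perturbation. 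This amounts to a careful local analysis of the pages of the broken book on $\widehat W$ near the radial circle obtained by collapsing a boundary torus, together with the openness of the transversality conditions used in Section \ref{sec:gluing}.
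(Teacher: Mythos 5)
Your overall strategy coincides with the paper's: decompose $M$ by the Structure Theorem, collapse boundary tori of $N_\pm$ to obtain closed contact manifolds and invoke CDR's existence theorem, take the fibration on $N_0$, and glue across each $U_i$ with the helix-box Proposition~\ref{prop:BirkT2}. However, there are three points worth flagging.

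First, a citation slip: the broken-book existence statement in \cite{CDR} that you want is Theorem~1.1 there, not Theorem~1.2 (the latter is the ``two-or-infinitely-many'' dichotomy). You cite Theorem~1.2 both for the initial construction and again in the stability step.

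Second, and more substantively, your $C^2$-stability argument has a gap. You assert that the contact form of a $C^2$-small perturbed SHS is ``a $C^2$-close non-degenerate contact form'', and then re-apply the existence theorem. But non-degeneracy of a contact form is not a $C^2$-open condition, so the perturbed $\widetilde\lambda$ in $N_c$ may well be degenerate, and the existence theorem of \cite{CDR} does not apply to it directly. The paper instead uses the openness result of \cite[Section~4.7]{CDR}: a Reeb vector field of a contact form $C^2$-close to a non-degenerate one is still carried by a broken book. Without invoking that stability statement, the perturbed contact region is not handled.

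Third, you explicitly defer the boundary-compatibility step (``this amounts to a careful local analysis...''). The paper resolves it concretely: the circles created by collapsing the boundary tori of $N_\pm$ are elliptic periodic orbits of the Reeb flow on $\overline W$ (because a neighborhood is foliated by invariant tori), so they cannot be broken components of the broken book. Hence each such orbit is either a radial binding component or transverse to the foliation, and in both cases the pages induce closed curves transverse to the linear irrational field on each nearby torus $T_\rho$. This is what guarantees that valid boundary data for Proposition~\ref{prop:BirkT2} is produced; it is not merely a genericity issue, and it also explains why the broken components of $K$ stay in the interior of $N_c$. Finally, to conclude that the pasted object really is a broken book, the paper verifies the hypotheses of Proposition~\ref{prop:ingredients}; your proposal asserts the conclusion without that verification step.
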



\begin{proof}
Let $U, N_0, N_c$ be a structural decomposition for which $(\lambda,\omega)$ is contact non-degenerate. If $U=\emptyset$, the result is immediate. We hence assume that $U\neq \emptyset$. Let us recall the ingredients of the construction of a broken book decomposition supporting a vector field $X$. These correspond to the properties enumerated in \cite[Lemma 3.6]{CDR} and are used in the construction of a broken book decomposition. We refer to \cite[Section 3]{CDR} for the proof.

\begin{prop}\label{prop:ingredients}
Let $X$ be a non-singular vector field on a closed 3-manifold $M$. Assume there is a finite collection of $\partial$-strong transverse surfaces with boundary $S_1, S_2, \ldots S_\ell$ such that:
\begin{enumerate}
    \item the {interior of the} transverse surfaces {are pairwise disjoint};
    \item every orbit intersects $\cup_i S_i$;
    \item $\cup_i\partial S_i= K$;
    \item $M\setminus \left(\cup_i S_i\right)$ fibers over $\mathbb{R}$ and the flow is transverse to the fibers;
    \item if an orbit of $X$ is not asymptotically linking $\cup_i S_i$, it converges to one of their boundary components which is a non-degenerate hyperbolic periodic orbit $\gamma$ with tubular neighborhood $\mathcal{V}$. In this case, each one of the quadrants transversally delimited by the stable and unstable manifolds of $\gamma$ is intersected by at least one $S_i$ such that a connected component of $S_i\cap \mathcal{V}$ contained in this quadrant has  $\gamma$ as a boundary component. 
\end{enumerate}
Then there is a $\partial$-strong broken book decomposition $(K,\mathcal{F})$ supporting $X$ and having the surfaces $S_i$ as pages.
\end{prop}

The binding non-degenerate hyperbolic orbits in Proposition~\ref{prop:ingredients} (5) are the orbits in $K_b$, that is the broken components of the binding of the broken book obtained. Given a vector field carried by a broken book decomposition, one can choose a finite collection of pages satisfying Proposition~\ref{prop:ingredients}: the set of rigid pages is one possibility. 
 
Below, we construct piecewise transverse surfaces to the vector field $X$ by constructing them in the regions $N_c$, $U$ and $N_0$. These local constructions are pasted together and can be smoothed as in the proof of Proposition~\ref{prop:BirkT2}.

Let $W$ denote a connected component of $N_c$. Near $\partial W$, the Reeb vector field is parallel to a linear vector field of constant irrational slope: arguing as in the proof of Lemma~\ref{lem:reebpseudo} we can compactify $W$ into a closed 3-manifold $\overline W$, and find a contact form $\alpha$ in $\overline W$. The invariant boundary components of $W$ yield closed non-degenerate elliptic orbits of the Reeb vector field $R_\alpha$ that are surrounded by a foliation of invariant tori $T_\rho$ for a real parameter $\rho$. Let $\Gamma$ be the collection of these periodic orbits of $R_\alpha$.
By construction, the vector field $X$ restricted to the interior of $N_c$ coincides with $R_\alpha$ in $\overline W \setminus \Gamma$.

Applying Theorem 1.1 in \cite{CDR}, we know that $R_\alpha$ is carried by a $\partial$-strong broken book decomposition $(K,\mathcal{F})$. Any orbit $\gamma \in \Gamma$ is an elliptic periodic orbit and hence it cannot be a broken component of $K$. It follows that either $\gamma$ is a radial component of $K$ or $\gamma$ is transverse to $\mathcal{F}$. In both cases, $\mathcal{F}$ induces in each invariant torus $T_\rho$ a foliation by closed curves transverse to $R_\alpha$, which is a linear irrational vector field in each torus. We deduce that in each connected component of $N_c$ the Reeb vector field $X$ is carried by a broken book decomposition, whose pages are transverse to the boundary tori and such that $K$ is in the interior of $N_c$. 

Let us still denote by $(K,\mathcal{F})$ the $\partial$-strong broken book decomposition of $N_c$: that is $(K,\mathcal{F})$ restricts to a $\partial$-strong broken book decomposition in each component of $N_c$.

On the other hand, in each connected component of $N_0$, the Reeb vector field is transverse to the fibers of a surface bundle over the circle. This means that in the boundary of any connected component of $V={U\setminus \operatorname{int}(N)}$, which connects two boundary components of $N$, there are two different induced homology classes of curves: the intersection of any page of $(K,\mathcal{F})$ with the boundary torus of $V$ (if that boundary torus belongs to $N_c$), or the intersection of any fiber of the surface bundle with the torus (if that boundary torus belongs to $N_0$).  

Choose the finite collection of rigid pages of $(K,\mathcal{F})$ in $N_c$ and denote them by $P=P_1\cup \cdots \cup P_k$. Observe that since $N_c$ has a boundary, the surfaces $P_i$ might have boundary components in $\partial N_c$. 

Take a connected component $V_i\cong T^2\times I$ of $V$ and set $\partial V_i=T_0\cup T_1$ where $T_k\cong T^2\times \{k\}$ for $k=0,1$. If $T_k \subset \partial N_c$ let $\Gamma_k$ be the collection of closed curves $P\cap T_k$. If $T_k\subset \partial N_0$ let $\Gamma_k$ be the intersection of a fiber with $T_k$. We obtain two families of closed curves $\Gamma_0$ and $\Gamma_1$ transverse to the Reeb vector field along $T_0$ and $T_1$ respectively. Applying Proposition~\ref{prop:BirkT2}, we construct a $\partial$-strong Birkhoff section in $V_i$ that glues together the surfaces in $N$ along $V_i$.

We can do this for each $V_i$, obtaining a finite collection $S=S_1\cup\cdots \cup S_\ell$ of $\partial$-strong transverse surfaces with boundary.  We claim that $S$ satisfies Proposition~\ref{prop:ingredients}. By construction, the surfaces have disjoint interiors and intersect all the orbits of $X$. Observe that in the closure of $M\setminus N_c$ the restriction of  $S$ is a Birkhoff section, hence the orbits that are not asymptotically linking $S$ are contained in $N_c$ and by the choice of $P$ they satisfy (5) of Proposition~\ref{prop:ingredients}. The same argument implies that the complement of $S$ fibers over $\mathbb{R}$.

The existence of a $\partial$-strong broken book decomposition carrying $X$ follows from Proposition~\ref{prop:ingredients}. 

\smallskip

It remains to show that there exists an open $C^2$-neighborhood (in the set of SHS) of $(\lambda,\omega)$, where each Reeb vector field is also carried by a $\partial$-strong broken book decomposition. Let $(\widetilde \lambda, \widetilde \omega)$ be a SHS sufficiently $C^2$-close to a SHS $(\lambda,\omega)\in \mathcal{B}$ that is contact non-degenerate for some structural decomposition $U, N_0, N_c$. If $U=\emptyset$, then either the Reeb vector field of $\widetilde X$ of $(\widetilde \lambda, \widetilde \omega)$ admits a global section (as this is a $C^0$-open property for a vector field), or $\widetilde \lambda$ is a contact form (as this is an open property as well) that is $C^1$-close to a non-degenerate one, and then it admits a $\partial$-strong broken book decomposition by \cite[Theorem 1.1]{CDR}. If $U\neq \emptyset$, by \cite[Theorem 3.7]{CV1}, the SHS $(\widetilde \lambda, \widetilde \omega)$ will have $T^2$-invariant integrable regions $K_i$ inside each component $V_i$ of $V=U\setminus \text{int}(N)$ of almost full measure in $V$. Since the slope of the Reeb vector field of $(\lambda,\omega)$ on $V$ was non-constant, the slope of the Reeb vector field of $(\widetilde \lambda, \widetilde \omega)$ has non-constant slope if the SHS is $C^2$-close to $(\lambda,\omega)$. The complement of $K=\sqcup K_i$ is $C^1$ close and diffeomorphic to $N_0\sqcup N_c$. In $N_0$, the Reeb vector field still admits a global section, since it is $C^1$-close to a suspension flow. In the contact region $N_c$, the Reeb vector field is $C^1$-close to the Reeb vector field of $(\lambda, \omega)$, and the contact form $\widetilde \lambda$ is $C^2$-close to $\lambda$. The broken book decomposition that carries the Reeb vector field of $(\lambda,\omega)$ is such that in the contact region, the binding components are non-degenerate periodic orbits and the flow is $\partial$-strong carried by the broken book. It was shown in \cite[Section 4.7]{CDR} that the Reeb vector field defined by a contact form that is $C^2$-close to a non-degenerate contact form is also supported by a $\partial$-strong broken book decomposition. We deduce that even if 
$(\widetilde \lambda, \widetilde \omega)$ is a priori not contact non-degenerate, the Reeb vector field is supported by a $\partial$-strong broken book decomposition in the contact region. 
The slope in the integrable region is non-constant in each connected component, so we can still construct a $\partial$-strong broken book decomposition supporting the Reeb vector field of $(\widetilde \lambda, \widetilde \omega)$ arguing step by step as in the contact non-degenerate case.
\end{proof}

\begin{Remark} \label{rem:bbook}
We only required the contact non-degeneracy to deduce that the Reeb vector field in the contact regions is carried by a $\partial$-strong broken book decomposition. Additionally, by construction, if the broken book decomposition $(K,\mathcal{F})$ given in the contact region $N_c$ is a rational open book decomposition, the previous theorem implies that the Reeb vector field of $(\lambda,\omega)$ is carried by a rational open book decomposition. Indeed, the only broken binding components arise in the  contact region.
\end{Remark}

We can now give a proof of the second part of Theorem~\ref{thm:mainBirk}.

\begin{theorem}\label{thm:Bsection}
Let $(\lambda,\omega)$ be a SHS in $\mathcal{B}_s$ on a closed 3-manifold $M$ whose Reeb vector field is $X$. 
 Then there is a $C^2$-neighborhood of $(\lambda,\omega)$ in the set of SHS, such that every Reeb vector field of a SHS in this neighborhood admits a $\partial$-strong Birkhoff section. 
\end{theorem}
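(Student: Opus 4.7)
The plan is to upgrade the broken book produced by Theorem~\ref{thm:bbook} into a Birkhoff section, using the strong non-degeneracy hypothesis inside each connected component of the contact region $N_c$, and then paste across the integrable cylinders by means of helix boxes. As in the proof of Theorem~\ref{thm:bbook}, the cases $f\equiv\text{const}$ reduce to known results (the pure contact case of \cite{CDHR,CM} or a genuine fibration over $\mathbb{S}^1$), so we assume $f$ is non-constant and apply the Structure Theorem~\ref{thm:struc} to decompose $M$ into $U=\bigsqcup U_i$ and $N=N_+\sqcup N_0\sqcup N_-$.

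First, I would work inside each connected component $W$ of the contact region $N_c$. As in the proof of Lemma~\ref{lem:reebpseudo}, collapse the $T^2$-invariant boundary components of $W$ to elliptic circles to obtain a closed contact manifold $(\overline W,\alpha)$ whose Reeb flow is strongly non-degenerate (this is precisely the role of the $\mathcal{B}_s$ assumption: it ensures transverse intersection of stable and unstable manifolds of hyperbolic orbits inside $N_c$, and the elliptic orbits created by the collapse do not affect the Smale condition). By the main theorem of \cite{CDHR} (or \cite{CM}), the Reeb flow on $\overline W$ admits a $\partial$-strong Birkhoff section, whose binding contains the collapsed elliptic orbits (since broken binding components are hyperbolic, they lie in the interior of $W$). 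Uncollapsing, this yields a Birkhoff section $\Sigma_W$ of $X$ in $W$ whose boundary inside $W$ is a collection of periodic orbits in the interior, and which meets $\partial W$ in a family of closed curves everywhere transverse to $X$.

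Next, in each connected component of $N_0$, Tischler's theorem together with a small perturbation of the stabilizing form gives a global section fibering the component over $\mathbb{S}^1$, whose boundary is a family of transverse closed curves on $\partial N_0$. I would then apply Proposition~\ref{prop:BirkT2} to each integrable cylinder $U_i\cong T^2\times I$: the two boundary tori of $U_i$ carry, by the previous two paragraphs, two transverse collections of closed curves $\Gamma_0$ and $\Gamma_1$ coming respectively from the adjacent components of $N_c\cup N_0$. Since the slope of $X$ on $U_i$ is non-constant (the defining condition of $\mathcal{B}$), Proposition~\ref{prop:BirkT2} produces a Birkhoff section of $X|_{U_i}$ interpolating between $\Gamma_0$ and $\Gamma_1$. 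Concatenating these local pieces with the $\Sigma_W$'s and the sections in $N_0$ gives a globally defined immersed surface $\Sigma\subset M$ which, by construction, is transverse to $X$, has periodic orbits as boundary, and intersects every orbit in bounded time (bounded inside each of the finitely many building blocks).

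For the $C^2$-stability, I would argue as at the end of the proof of Theorem~\ref{thm:bbook}: by \cite[Theorem~3.7]{CV1}, any SHS $(\widetilde\lambda,\widetilde\omega)$ sufficiently $C^2$-close to $(\lambda,\omega)$ has an integrable region $\widetilde U$ of almost full measure inside $U$ where the slope remains non-constant, and its complement decomposes into regions diffeomorphic to $N_0$ and $N_c$ on which the Reeb flow is $C^1$-close to the original one. On the contact part, non-degeneracy and the Smale condition are open in the $C^2$ and $C^1$ topologies respectively, so the construction of \cite{CDHR,CM} still applies, and the helix-box gluing of Proposition~\ref{prop:BirkT2} is purely topological and hence persists. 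The main obstacle I anticipate is the first step: one must be careful that the Birkhoff section obtained in $\overline W$ can be taken so that, after uncollapsing, its intersection with $\partial W$ is a tame collection of transverse curves whose homology classes are compatible with the gluing hypotheses of Proposition~\ref{prop:BirkT2} (in particular, non-trivial on each boundary torus). This is where the freedom to choose the binding in the broken-to-Birkhoff upgrade, together with the flexibility built into Proposition~\ref{prop:BirkT2} regarding the curves $\Gamma_0,\Gamma_1$, is essential.
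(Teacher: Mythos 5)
Your proof is essentially correct and follows the same strategy as the paper: collapse $\partial N_c$ to obtain a closed contact manifold $\overline W$ where strong non-degeneracy applies, invoke \cite{CM}/\cite{CDHR}, and glue across the integrable cylinders $U_i$ with the helix boxes of Proposition~\ref{prop:BirkT2}. The paper packages the argument through a rational open book decomposition rather than a single surface: it upgrades the broken book in $\overline W$ to a rational open book via \cite[Theorem A]{CM}, then reuses Theorem~\ref{thm:bbook} together with Remark~\ref{rem:bbook} (the helix-box bindings are never broken) to obtain a \emph{global} rational open book, of which a page is the sought Birkhoff section; this makes the ``meets every orbit in bounded time'' check immediate, whereas your direct concatenation needs the observation that each component of $N$ and each invariant torus in $U_i$ is flow-invariant. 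Two small imprecisions to be aware of: you assert that the binding of the Birkhoff section of $\overline W$ contains the collapsed elliptic orbits, which need not hold --- the paper only uses that an elliptic orbit is either a radial binding component or transverse to the foliation, and both cases pass smoothly through the uncollapsing --- and for the $C^2$-stability the right justification is not that non-degeneracy is open (it is not) but \cite[Proposition 5.1]{CDHR}, which says the property of admitting a $\partial$-strong Birkhoff section with non-degenerate binding is $C^1$-open among vector fields.
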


\begin{proof}
Let us keep the notation of the proof of Theorem~\ref{thm:bbook}. If the Reeb vector field is strongly non-degenerate for some structural decomposition $U, N_0, N_c$, it is also strongly non-degenerate in the closed 3-manifold $\overline W$ obtained by blowing down the boundary components of $N_c$. By \cite[Theorem A]{CM},  the $\partial$-strong broken book decomposition adapted to the Reeb vector field obtained in \cite[Theorem 1.1]{CDR} can be modified into a proper rational open book decomposition adapted to the Reeb vector field, whose pages are $\partial$-strong transverse surfaces. Applying Theorem~\ref{thm:bbook} and Remark~\ref{rem:bbook}, we deduce that the Reeb vector field of $(\lambda,\omega)$ is supported by a rational open book decomposition. One of its pages defines a $\partial$-strong Birkhoff section of the Reeb vector field.

Finally, we want to show that given any SHS $(\lambda,\omega)$ satisfying our hypotheses, there is $C^2$-open neighborhood of stable Hamiltonian structures around it that also admit a $\partial$-strong Birkhoff section. We argue exactly as in the last step of the proof of Theorem~\ref{thm:bbook}. The $\partial$-strong Birkhoff section that we constructed is such that in the contact regions  the binding components are non-degenerate periodic orbits. Then \cite[Proposition 5.1]{CDHR} shows that any Reeb vector field sufficiently $C^1$-close to the Reeb vector field of a strongly non-degenerate contact form also admits a $\partial$-strong Birkhoff section. Hence the Reeb vector field of
$(\widetilde \lambda, \widetilde \omega)$ admits a $\partial$-strong Birkhoff section in the contact region. The slope in the integrable region is non-constant in each connected component, so by Remark~\ref{rem:bbook} and Theorem~\ref{thm:bbook} we can still construct a $\partial$-strong Birkhoff section for the Reeb vector field of $(\widetilde \lambda, \widetilde \omega)$.
\end{proof}

The combination of Theorems~\ref{thm:bbook} and \ref{thm:Bsection} yields the first two parts of Theorem~\ref{thm:mainBirk}. The last one, namely the fact that $\mathcal{B}_s$ is $C^1$-dense among the set of SHS, is proved in \textsection\ref{sec:gen}.

\section{Genericity of Birkhoff sections}\label{sec:gen}

In this section we use our previous results to show that there exists a $C^1$-open and $(C^1,C^\infty)$-dense set of SHS that admits a Birkhoff section. Here the $(C^1,C^\infty)$-topology means that in the set of SHS, we measure the distance between the stabilizing 1-forms in the $C^1$-topology and the distance between the 2-forms in the $C^\infty$-topology. As a corollary, we deduce Theorem~\ref{thm:generic}.

\subsection{Density of $\mathcal{B}_s$
}\label{ss:density}

We first show that strongly contact non-degenerate SHS whose slope is non-constant in each connected component of the integrable region $U$ are dense, in a suitable topology, in the set of SHS. The following statement is the missing part of Theorem~\ref{thm:mainBirk}.

\begin{theorem}\label{thm:density}
Let $(\lambda,\omega)$ be a SHS on a closed 3-manifold $M$. Then there exists a SHS $(\widetilde \lambda, \widetilde \omega) \in \mathcal{B}_s$ such that $\widetilde \lambda$ is arbitrarily $C^1$-close to $\lambda$ and $\widetilde \omega$ is arbitrarily $C^\infty$-close to $\omega$.
\end{theorem}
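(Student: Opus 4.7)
The plan is to reduce the proof to two successive perturbations, each carried out within the class of SHS.

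First, starting from $(\lambda,\omega)$, I would apply the $C^1$-density of Morse-Bott non-degenerate SHS established in \cite{CV1} to obtain a pair $(\lambda_0,\omega_0)$ that is $(C^1,C^\infty)$-close to $(\lambda,\omega)$ and is Morse-Bott non-degenerate, hence contact non-degenerate by the remark following Definition~\ref{def:contactND}. If the slope of the Reeb vector field happens to be constant in some connected component $U_i \cong T^2 \times I$ of the integrable region, I would perform an arbitrarily small $T^2$-invariant modification inside $U_i$ that alters the linear data $f_0|_{U_i} = \alpha_i r + \beta_i$ of Theorem~\ref{thm:struc} so that the resulting Reeb slope is non-constant while the pair remains a SHS. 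The result is a pair $(\lambda_1, \omega_1) \in \mathcal{B}$ arbitrarily $(C^1,C^\infty)$-close to $(\lambda,\omega)$.

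Second, in the contact region $N_c$ of $(\lambda_1,\omega_1)$, the one-form $\lambda_1$ is a non-degenerate contact form; its hyperbolic orbits and their stable and unstable manifolds lie in the interior of $N_c$, since contact non-degeneracy forbids periodic orbits near $\partial N_c$. I would apply a standard Kupka-Smale type scheme, consisting of finitely many $C^\infty$-small compactly supported bumps near pairs of hyperbolic orbits whose invariant manifolds intersect non-transversely, to produce a contact form $\tilde\lambda$ on $M$, equal to $\lambda_1$ outside a compact subset of $\operatorname{int}(N_c)$, whose Reeb flow in $N_c$ has all stable/unstable manifold intersections transverse. To accompany $\tilde\lambda$ with a closed two-form $\tilde\omega$ forming a SHS, I set $\tilde\omega = \omega_1$ outside the perturbation support and, inside, define $\tilde\omega = d\tilde\lambda / \tilde f$ for a positive function $\tilde f$ satisfying $d\tilde f \wedge d\tilde\lambda = 0$ and matching $f_1 = d\lambda_1/\omega_1$ on the boundary of the support.

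The main obstacle is constructing $\tilde f$ compatibly with both the closedness of $\tilde\omega$ and the boundary data. The equation $d\tilde f \wedge d\tilde\lambda = 0$ forces $\tilde f$ to be a first integral of the Reeb flow of $\tilde\lambda$ on the support of the perturbation, which a priori is a rigid constraint. I would resolve this by choosing the Kupka-Smale perturbations to be supported in small flow-boxes where the Reeb flow of $\tilde\lambda$ is non-recurrent, so that $\tilde f$ can be prescribed on a transverse inflow slice and transported along the Reeb orbits to the outflow side; the boundary values of $f_1$ are matched automatically where $\tilde\lambda = \lambda_1$ and adjusted on the slice by a correction whose size goes to zero with the perturbation. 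Composing the two steps yields a SHS in $\mathcal{B}_s$ arbitrarily close to $(\lambda,\omega)$ in the $(C^1,C^\infty)$-topology, completing the proof.
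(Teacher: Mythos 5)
Your first step (applying $C^1$-density of Morse--Bott non-degenerate SHS from \cite{CV1} and then performing a small $T^2$-invariant modification to make the slope non-constant) parallels the paper's use of Lemma~\ref{lem:nonconstantslope} and is essentially fine. The gap is in the second step.

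The paper's proof relies on a normalization that your proposal omits: by \cite[Proposition 3.23]{CV1}, one first replaces $\lambda$ by a $C^1$-close \emph{stabilizing} one-form $\lambda'$ with $d\lambda' = c\,\omega$ (a constant $c$) on each connected component of the contact region $N_c$. Once $f' \equiv c$, \emph{any} compactly supported perturbation $\eta$ of $\lambda'$ inside $\operatorname{int}(N_c)$ can be paired with the two-form $c^{-1}d(\lambda'+\eta)$, which is automatically closed, automatically satisfies the SHS compatibility condition, and is $C^\infty$-close to $\omega$. In other words, after this normalization there is no constraint left to solve: the Kupka--Smale perturbation in $\lambda$ dictates the companion two-form for free. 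Your approach skips this and instead tries to keep $\omega_1$ fixed outside a flow box while setting $\tilde\omega = d\tilde\lambda / \tilde f$ inside, with $\tilde f$ a first integral of the \emph{perturbed} Reeb flow agreeing with $f_1$ on the boundary of the support. This does not close up. The value of $\tilde f$ on the outflow face is determined by its values on the inflow face and by the holonomy of the perturbed flow, neither of which is a free parameter once $\tilde\lambda$ has been fixed; the ``correction on the slice'' you allude to cannot reconcile the transported inflow values with the prescribed outflow values $f_1$ for a generic Kupka--Smale $\eta$.

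There is a second, more structural obstruction. Whenever $\tilde f$ (or $f_1$) is a non-constant first integral near a hyperbolic periodic orbit, the stable and unstable manifolds of that orbit are contained in the $2$-dimensional level sets of $\tilde f$; two surfaces both lying inside the same $2$-dimensional leaf cannot intersect transversely in the ambient $3$-manifold unless they do not intersect at all. Hence strong contact non-degeneracy (transverse $W^s \pitchfork W^u$) is incompatible with a genuinely non-constant $\tilde f$ near those orbits. The paper's normalization of $f$ to the constant $c$ on $N_c$ removes precisely this obstruction, making the subsequent Kupka--Smale argument (\cite{CM, Ro, Pe}) applicable without further constraints on the perturbation. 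Without the \cite[Proposition 3.23]{CV1} step, your scheme would have to either fail the closedness of $\tilde\omega$ or fail strong non-degeneracy.
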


If there is a structural decomposition of $M$ with respect to $(\lambda, \omega)$ with $U$ empty, the result follows from known results for Reeb vector fields and suspensions. We give a proof in the case $U\neq \emptyset$. Under this assumption an intermediate step is the following lemma.

\begin{lemma}\label{lem:nonconstantslope}
Let $(\lambda,\omega)$ be a stable Hamiltonian structure on $M$ and $N,U$ a structural decomposition. There exists an arbitrarily $C^\infty$-small perturbation $(\widetilde \lambda,\widetilde \omega)$ of $(\lambda,\omega)$, compactly supported in the integrable region $U$ and $T^2$-invariant, that yields a stable Hamiltonian structure with structural decomposition $N,U$ such that the slope of $\ker \widetilde \omega$ is non-constant in each connected component of $U$.
\end{lemma}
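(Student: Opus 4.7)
The plan is to work locally on each connected component $U_i \cong T^2 \times I$ of the integrable region and construct an explicit $T^2$-invariant perturbation supported in the interior of $U_i$ whenever the slope of $\ker\omega$ is constant there. Using coordinates $(x,y,t)$ adapted to the $T^2$-invariance given by Theorem~\ref{thm:struc}, one can write
\[
\lambda|_{U_i} = h_1(t)\,dx + h_2(t)\,dy, \qquad \omega|_{U_i} = g_1(t)\,dt\wedge dx + g_2(t)\,dt\wedge dy,
\]
so that $\ker\omega$ is tangent to the tori with slope $-g_1(t)/g_2(t)$ in the basis $\{\partial_x,\partial_y\}$. The SHS condition $\ker\omega\subset\ker d\lambda$ becomes the pointwise identity $d\lambda = f\omega$ with $f(t)=\alpha_i t + \beta_i$, equivalently $h_j'(t) = f(t)g_j(t)$ for $j=1,2$. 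Since $\alpha_i>0$, the function $f$ vanishes at most at one point $t^*\in[0,1]$.

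Suppose the slope on some $U_i$ is constant. Choose a smooth bump $\mu(t)$ supported in a small subinterval of $(0,1)$ disjoint from $t^*$, and set
\[
\lambda^{new} := \lambda + \mu(t)\,dx, \qquad \omega^{new} := \omega + \frac{\mu'(t)}{f(t)}\,dt\wedge dx,
\]
extended by the original forms outside $U_i$. A direct computation gives $d\lambda^{new} = f\,\omega^{new}$, so the condition $\ker\omega^{new}\subset\ker d\lambda^{new}$ is preserved; $d\omega^{new}=0$ is automatic because the perturbing term is of the form (function of $t$)$\,dt\wedge dx$; and $\lambda^{new}\wedge\omega^{new}>0$ for $\mu$ sufficiently small by continuity. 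The new slope equals $-(g_1+\mu'/f)/g_2$, which differs from the original constant by the non-constant function $-\mu'/(fg_2)$; hence the perturbed slope is non-constant on $U_i$. Carrying out this construction in every component of $U$ where the slope was originally constant, and leaving the SHS unchanged in the other components, yields the desired $C^\infty$-small $T^2$-invariant perturbation.

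The main delicate point is coordinating the simultaneous perturbations of $\lambda$ and $\omega$. A perturbation of $\omega$ alone generically destroys the axiom $\ker\omega\subset\ker d\lambda$, while a perturbation of $\lambda$ alone leaves $\ker\omega$ unchanged and therefore does not alter the slope of the Reeb direction: it merely rescales the Reeb vector field. The coupling between $\delta\lambda$ and $\delta\omega$ above is dictated precisely by the requirement $d\lambda^{new} = f\,\omega^{new}$, and is essentially the minimal simultaneous perturbation that both preserves the SHS conditions and modifies the slope. The only technicality is arranging the support of $\mu$ to avoid the zero of $f$ so that $\mu'/f$ makes sense; this is harmless since $f$ is affine with positive slope, and the support of $\mu$ can be chosen in either of the two intervals into which $t^*$ divides $(0,1)$.
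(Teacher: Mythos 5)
Your construction is correct and does yield the desired $T^2$-invariant, $C^\infty$-small perturbation; the route differs from the paper's in the way the coupled perturbation of $(\lambda,\omega)$ is parametrized. The paper perturbs $\omega$ first by changing one of its coefficient functions, then \emph{integrates} an ODE to determine the matching perturbation of $\lambda$; this produces an integral normalization condition that must be imposed to keep the $\lambda$-perturbation compactly supported. You instead prescribe an explicit compactly supported perturbation $\mu(t)\,dx$ of $\lambda$ and obtain the $\omega$-perturbation by \emph{differentiating}, via the closed-form coupling $\delta\omega = (\mu'/f)\,dt\wedge dx$, which manifestly preserves $d\lambda^{new}=f\,\omega^{new}$ with the same $f$ and eliminates the integral constraint altogether. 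The price is that you must support $\mu$ away from the zero of $f$, which you correctly observe is harmless since $f$ is affine with positive slope and hence vanishes at most once in the interval. One small point you leave implicit: your non-constancy argument divides by $g_2$, which presumes the original constant slope is finite; if instead $g_2\equiv 0$ (slope identically ``infinite''), the same construction works after swapping $dx$ for $dy$ in both perturbing terms, and it would be worth stating that dichotomy explicitly.
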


\begin{proof}
In any connected component of $U$, we take coordinates $(x,y,t)$ in $T^2\times I$. We know that $\lambda$ is of the form
$$ \lambda= g_1(t) dx + g_2(t) dy + g_3(t)dt,$$
and $\omega$ is of the form
$$ \omega= h_1(t)dt\wedge dx + h_2(t)dt\wedge dy.$$
The fact $(\lambda,\omega)$ defines a stable Hamiltonian structure implies that
$${g_1}'h_2-{g_2}'h_1=0 \qquad \mbox{and} \qquad  h_1g_2-h_2g_1>0. $$
If $\ker \omega$ has constant slope, then $\frac{h_2}{h_1}$ is constant. We might simply perturb the function $h_2$ to $\widetilde h_2(t)$ in a way that $h_2(t)=\widetilde h_2(t)$ for $|t-\frac{1}{2}|\geq \varepsilon$, the slope is no longer constant and $\widetilde h_2$ is arbitrarily $C^\infty$-close to $h_2$. This defines an arbitrarily small $C^\infty$ perturbation of $\omega$, that we denote by
$$ \widetilde \omega= h_1(t) dt\wedge dx + \widetilde h_2(t) dt\wedge dy.$$
We can now perturb $\lambda$ by a $C^\infty$-small perturbation to
$$\widetilde \lambda= g_1(t)dx+ \widetilde g_2(t)dy+g_3(t)dt,$$
where $\widetilde g_2(t)$ is determined by the equations
\begin{equation*}
\begin{cases}
\widetilde {g_2}'=\frac{{g_1}'\widetilde h_2}{h_1},\\
\widetilde g_2(0)=g_2(0).
\end{cases}
\end{equation*} 
The function $\widetilde g_2$ coincides with $g_2$ for $t\leq \frac{1}{2}-\varepsilon$. It  coincides with $g_2$ for $t > \frac{1}{2}+\varepsilon$ if and only if 
$$ \int_0^1 \frac{{g_1}'\widetilde h_2}{h_1}dt=g_2(1)-g_2(0).  $$
It is clear that we can choose some $\widetilde h_2$ satisfying this condition (see also \cite[Lemma 3.15]{CV1} for more general statements on this type of perturbations, considered in the $C^1$-topology). It follows that $(\widetilde \lambda,\widetilde \omega)$ is a stable Hamiltonian structure that is $C^\infty$-close to $(\lambda,\omega)$, coincides with it outside of $V=T^2\times(1/2-\varepsilon,1/2+\varepsilon)\subset T^2\times I$ and whose Reeb vector field has a non-constant slope in $V$. The perturbation can hence be applied in each connected component of $U$.
\end{proof}

It follows from the previous lemma and the fact that contact non-degenerate SHS are $C^1$-dense \cite[Theorem 4.6]{CV1}, that $\mathcal{B}$ is $C^1$-dense among SHS.  The techniques in \cite[Theorem 4.6]{CV1} allow us to prove that $\mathcal{B}_s$ is dense with $C^1$-topology in the 1-form $\lambda$ and $C^\infty$-topology in $\omega$ as we explain below. 

\smallskip

\noindent {\it Proof of Theorem~\ref{thm:density}.}
Let $(\lambda,\omega)$ be a SHS on a closed 3-manifold $M$. By Theorem \ref{thm:struc}, there exists some $\lambda'$ arbitrarily $C^1$-close to $\lambda$ such that the SHS $(\lambda',\omega)$ admits a structural decomposition $U, N_0, N_c$ such that $d\lambda'=c\omega$ for some constant $c\in \mathbb{R}$ in each connected component of the contact region $N_c$, and $d\lambda'=0$ in $N_0$. We can perturb the SHS by a perturbation of the form
$$(\lambda'+ \eta, c(\omega+d\eta)),$$ 
where $\eta$ is a 1-form that is arbitrarily $C^\infty$-small and compactly supported in the interior of the contact region that makes the stable Hamiltonian structure contact strongly non-degenerate \cite{CM, Ro, Pe}. This is possible because close to the boundary of each connected component of $N$ the flow is just integrable of constant irrational slope. By Lemma~\ref{lem:nonconstantslope}, we can make another arbitrarily $C^\infty$-small perturbation, compactly supported in the integrable region, such that the slope of the Reeb vector field in each connected component of $U$ is non-constant. This shows that $\mathcal{B}_s$ is dense with respect to the $C^1$-topology in the 1-form and the $C^\infty$-topology in the 2-form. \hfill $\square$


\begin{Remark}\label{rm:rat}
In \cite{CV0}, it was shown that any SHS is exact stable homotopic (through an homotopy that is not small in any $C^k$-topology) to one which is supported by an open book decomposition, which is equivalent to the Reeb vector field admitting a Birkhoff section. Given a SHS $(\lambda, \omega)$, the $C^1$-perturbation in the proof above can be done through an exact stable homotopy (as per Lemma~\ref{lem:nonconstantslope} and \cite[Theorem 4.6]{CV1}). Hence, there is $C^1$-small exact stable homotopy $(\lambda+\alpha_t, \omega+ d\mu_t)$ such that $\alpha_0=0$, $\mu_0=0$ and such that the Reeb vector field of $(\lambda+\alpha_1, \omega+d\mu_1)$ admits a Birkhoff section. Equivalently, this means that $(\lambda+\alpha_1,\omega+d\mu_1)$ is supported by a rational open book decomposition. 
\end{Remark}

\subsection{Open and dense sets with Birkhoff sections}
Using the previous dense set, we proceed to show that a $C^1$-generic SHS admits a Birkhoff section and deduce Theorem~\ref{thm:generic}.

\begin{theorem}\label{thm:C1generic}
Let $M$ be a closed 3-manifold. In the set of SHS on $M$, there is a set of SHS whose Reeb vector fields admit a $\partial$-strong Birkhoff section that is dense in the $(C^1,C^\infty)$-topology, and $C^1$-open.
\end{theorem}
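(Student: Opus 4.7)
The plan is to exhibit as the desired set $\mathcal{O}$ the $C^1$-interior of the collection of SHS whose Reeb vector field admits a $\partial$-strong Birkhoff section, and then to establish $(C^1,C^\infty)$-density by showing the inclusion $\mathcal{B}_s\subset\mathcal{O}$, combined with Theorem~\ref{thm:density}. This reduces the theorem to two tasks: (i) checking that the Birkhoff section produced in the proof of Theorem~\ref{thm:Bsection} for a SHS in $\mathcal{B}_s$ is $\partial$-strong, and (ii) checking that the $\partial$-strong property persists under $C^1$-small perturbations of $(\lambda,\omega)$.

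For task (i), I would revisit the construction: in each component of the contact region $N_c$ the Birkhoff section is obtained from a rational open book decomposition constructed from \cite{CDHR,CM}, and its pages are $\partial$-strong around each non-degenerate binding orbit by the standard contact argument. The remaining binding components lie in the integrable region $U$ and arise from the helix-box construction. For these, the explicit parametrization $\phi_i(s,\rho)=((1-\rho)s+\rho y(s),\rho r(s),\theta(s))$ in Lemma~\ref{lem:Birk} shows that the blown-up boundary curve on the unit normal torus $\Sigma_\gamma$ is an embedded linear curve, transverse to the extended flow (which is asymptotically parallel to $\partial/\partial y$ in the helix-box coordinates). Assembling these local verifications yields that the global Birkhoff section is $\partial$-strong.

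For task (ii), suppose $(\lambda,\omega)\in\mathcal{B}_s$ and let $S$ be the $\partial$-strong Birkhoff section just constructed. For $(\widetilde\lambda,\widetilde\omega)$ sufficiently $C^1$-close, the Reeb vector field $\widetilde X$ is $C^1$-close to $X$, so interior transversality of $S$ to $\widetilde X$ is immediate. Each binding orbit $\gamma$ is either a non-degenerate orbit in $N_c$, which persists under $C^1$-perturbations of the flow, or a rational-slope orbit on an invariant torus in $U$; by \cite[Theorem 3.7]{CV1} the $T^2$-invariant integrable structure on $U$ survives on a set of nearly full measure, so the relevant invariant torus and its periodic orbit persist after a small isotopy. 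Since the $\partial$-strong condition on $\Sigma_\gamma$ is an open transversality condition on the extended flow, it is preserved along any such isotopy. Applying the argument of \cite[Proposition 5.1]{CDHR} in the contact region, and a direct helix-box reconstruction in the integrable region, gives a $\partial$-strong Birkhoff section for $\widetilde X$. This proves $\mathcal{B}_s\subset\mathcal{O}$, and hence $\mathcal{O}$ is $(C^1,C^\infty)$-dense by Theorem~\ref{thm:density}.

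The main obstacle is the persistence of the helix-box binding orbits, which are not hyperbolic and sit inside invariant rational tori of the unperturbed flow. They could in principle be destroyed by a generic perturbation. The key point resolving this is that these orbits live inside the integrable region $U$ where, by \cite[Theorem 3.7]{CV1}, the $T^2$-invariant structure is preserved on a region of nearly full measure after a $C^1$-small perturbation of the SHS; the relevant rational invariant torus therefore persists (up to a small shift in the transverse coordinate), and the helix-box construction can be carried out around the perturbed orbit essentially unchanged. Once this step is verified, the global assembly follows routinely.
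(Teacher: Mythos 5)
Your reduction to ``$\mathcal{B}_s$ is contained in the $C^1$-interior of the $\partial$-strong property, hence density follows from Theorem~\ref{thm:density}'' is the natural first attempt, and task (i) is fine: the section produced in Theorem~\ref{thm:Bsection} is indeed $\partial$-strong, both at the hyperbolic bindings inside $N_c$ and at the helix-box bindings inside $U$ (the parametrization $\phi_i$ gives transverse blown-up boundary curves). The problem is entirely in task (ii), and it is a real gap, not a routine verification.

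The helix-box binding orbits $\gamma_1,\dots,\gamma_r$ sit on \emph{rational} invariant tori inside the integrable region $U$. Such tori, and the periodic orbits foliating them, are highly degenerate, and they are exactly the ones that do \emph{not} persist under perturbation. You invoke \cite[Theorem~3.7]{CV1} to claim the relevant torus survives, but that theorem only yields a $T^2$-invariant integrable set of \emph{almost full measure}, and the rational tori lie in the small-measure exceptional set that is generically destroyed (this is the KAM dichotomy: only Diophantine tori persist). Moreover that theorem is stated for $C^2$-small perturbations, while the openness claimed here is $C^1$. So the ``key point resolving the obstacle'' you identify does not actually resolve it; after a $C^1$-small perturbation of the SHS there is in general no invariant torus near the old rational one, no periodic orbit playing the role of $\gamma_i$, and no helix box to rebuild. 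The inclusion $\mathcal{B}_s\subset\mathcal{O}$ therefore does not hold as stated.

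The paper's proof avoids this by \emph{not} exhibiting $\mathcal{B}_s$ itself as the dense set, but an auxiliary dense set obtained from $\mathcal{B}_s$ by a further perturbation that removes the degeneracy at the helix-box bindings. Concretely: starting from $(\lambda,\omega)\in\mathcal{B}_s$ with $\partial$-strong Birkhoff section $S$, one uses \cite[Proposition~3.23]{CV1} to flatten the integral $f$ to a nonzero constant in a small invariant neighborhood $V$ of each $\gamma_i$, so that $(\lambda,\omega)$ becomes of contact type there; then, working in the symplectization of $V$ and using a Robinson-type argument (\cite[Lemma~19]{Ro}, plus the standard fact that $C^\infty$-nearby hypersurfaces are graphs of functions $f$ with Reeb form $f\lambda$ \cite[Section~6]{HWZ2}), one perturbs the contact form in $V$ so that $\gamma_i$ becomes a \emph{nondegenerate} periodic orbit of the new Reeb flow, while $S$ remains a $\partial$-strong Birkhoff section. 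Only after all bindings are nondegenerate does one apply \cite[Proposition~5.1]{CDHR} to get $C^1$-openness. So the correct statement is that the dense set is this perturbed family; the density and openness arguments must be interlaced, not separated as in your reduction. If you want to salvage your plan, you would need to replace the persistence-of-rational-tori claim by this flattening-and-nondegeneracy step.
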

{We point out that the set of SHS that we prove to have such a Birkhoff section is not $\mathcal{B}_s$. We start with any SHS, and perturb it to be in $\mathcal{B}_s$ but then we need an extra perturbation of the SHS to ensure that the Birkhoff section is $\partial$-strong.}

\begin{proof}
Let $(\lambda',\omega')$ be a SHS. By Theorem~\ref{thm:density}, there exists an arbitrarily $(C^1,C^\infty)$-close SHS $(\lambda,\omega) \in \mathcal{B}_s$. By Theorem~\ref{thm:Bsection}, the Reeb vector field $X$ of $(\lambda,\omega)$ admits a Birkhoff section $S$. This Birkhoff section is $\partial$-strong and the binding orbits are all non-degenerate except those in the integrable region $U$, which is a union of regular level sets of $f=\frac{d\lambda}{\omega}$. If the Birkhoff section is  a global section (meaning that $\partial S=\emptyset$), then this is $C^1$-robust with respect to perturbations of the vector field and we are done. Similarly, if there are no binding orbits in the integrable region, we conclude by \cite[Section 5]{CDHR}. 

In general, let $\gamma_1,..., \gamma_r$ be the binding orbits of $S$ in $U$. For each $\gamma\in \{\gamma_1,..., \gamma_r\}$, recall that this orbit lies in a torus fiber $T$ of $U$ given by a connected component of a regular level set of $f$, and thus there is a neighborhood $\mathcal{V}$ of  $T$  such that $\mathcal{V}=T^2\times (-\varepsilon,\varepsilon)$ with $f=c_i+t$, where $c_i\neq 0$ is the (regular) value of $f$ on $T$, and $t$ is the coordinate in the second factor of $\mathcal{V}$. Let $Z=\sqcup_{i=1}^r c_i$. By \cite[Proposition 3.23]{CV1}, there exist an arbitrarily $C^1$-close 1-form $\widetilde \lambda$ such that $(\widetilde \lambda, \omega)$ is a SHS, and the function $\widetilde f=\frac{d\widetilde \lambda}{\omega}$ satisfies $\widetilde f\equiv c_i$ in an open neighborhood of $f^{-1}(c_i)$. Notice that the Reeb flow of $(\widetilde \lambda, \omega)$ is just a reparametrization of $X$ by a function that is arbitrarily $C^1$-close to the constant function equal to $1$. In particular, taking the open neighborhood $\mathcal{V}$ small enough, the SHS in $\mathcal{V}\subset \widetilde f^{-1}(c_i)$ is of the form $(\widetilde \lambda, c_i d\widetilde \lambda)$. 

Choose $\gamma \in \{\gamma_1,\ldots, \gamma_r\}$. We claim that there exists a function $\rho$ arbitrarily $C^\infty$-close to $1$ and equal to $1$ outside a compact subset of $\mathcal V$ such that the Reeb flow of $(\rho\widetilde \lambda, c_i d (\rho \widetilde \lambda))$ has $\gamma$ as a non-degenerate periodic orbit. Observe that the SHS $(\rho\widetilde \lambda, c_i d (\rho \widetilde \lambda))$ extends as $(\widetilde \lambda, \omega)$ away from $\mathcal V$.

To see that such a function $\rho$ exists, consider $\mathcal V$ equipped with the contact form $\widetilde\lambda$, and the symplectization $\mathcal V\times (-\delta,\delta)$ with symplectic form $\Omega= d(e^s\widetilde\lambda)$, where $s$ is the parameter in the second factor. We identify $\mathcal V$ with $\mathcal V\times \{0\}$. By \cite[Lemma 19]{Ro}, there exists a hypersurface $\widetilde{\mathcal V}$ that is arbitrarily $C^\infty$-close to $\mathcal V\times \{0\}$, coincides with $\mathcal V\times \{0\}$ away from a small neighborhood of $\gamma\subset \mathcal V$, and such that $\gamma$ is a non-degenerate periodic orbit of the Hamiltonian flow induced on $\widetilde V$. It is well known that the vector field induced on any hypersurface $C^\infty$-close to $\mathcal V\times \{0\}$ is the Reeb vector field of the contact form $\rho \widetilde\lambda$ for some function $\rho$ that is $C^\infty$-close to $1$ (see e.g. \cite[Section 6]{HWZ2}). Applying this fact to $\widetilde{\mathcal V}$, we find a contact form $\rho \widetilde \lambda$, where the function $\rho$ is equal to $1$ outside of a small neighborhood of $\gamma$, whose Reeb flow has $\gamma$ as a non-degenerate periodic orbit. Denote by $(\hat \lambda, \hat \omega)$ the SHS obtained by extending $(\rho \widetilde \lambda, c_i d(\rho \widetilde \lambda))$ by $(\widetilde \lambda,\widetilde \omega)$ in $M\setminus \mathcal V$, it is $C^\infty$-close to $(\widetilde \lambda,\widetilde \omega)$. Its Reeb flow $\hat R$ coincides along $V$ with that of the contact form $f\widetilde \lambda$, and hence has $\gamma$ as a non-degenerate periodic orbit. Doing this at each $\gamma_i$, we find a stable Hamiltonian structure that we still denote by $(\widetilde \lambda, \widetilde \omega)$ for which $S$ is a Birkhoff section with non-degenerate periodic orbits of the Reeb flow $\hat R$. Furthermore, since the Birkhoff section $S$ was $\partial$-strong, it remains a Birkhoff section of $\hat R$.

 Thus we have shown that there is SHS $(\hat \lambda,\hat \omega)$ that is arbitrarily $(C^1, C^\infty)$-close to $(\lambda',\omega')$ and whose Reeb vector field admits a $\partial$-strong Birkhoff section with non-degenerate binding orbits. Finally, admitting a $\partial$-strong Birkhoff section with non-degenerate binding orbits is a $C^1$-open condition among vector fields \cite[Proposition 5.1]{CDHR}, and $C^1$-close stable Hamiltonian structures define $C^1$-close Reeb vector fields. This concludes the proof.
\end{proof}


Although we did not explicitly state it, the $C^\infty$-perturbation of the two-form that we produce in Theorem \ref{thm:C1generic} is exact. We finish this section by proving Theorem~\ref{thm:generic}.

\begin{proof}[Proof of Theorem~\ref{thm:generic}]
Let $X$ be a reparametrized Reeb vector field of a SHS, i.e. it satisfies $\lambda(X)\neq 0$ and $\iota_X\omega=0$ for some SHS $(\lambda,\omega)$. Since admitting a Birkhoff section does not depend on the orientation of $X$, if $\lambda(X)<0$ we consider $-X$ instead. 

Theorem~\ref{thm:C1generic} shows that there exists a $C^1$-perturbation of $\lambda$ and a $C^\infty$-perturbation of $\omega$ such that the Reeb vector field of $(\widetilde \lambda, \widetilde \omega)$ admits a $\partial$-strong Birkhoff section with non-degenerate binding orbits. Let $\widetilde R$ be the Reeb vector field defined by such SHS, we claim that there exists a positive function $g\in C^\infty(M)$ such that $\widetilde X=g\widetilde R$ is a $C^\infty$-perturbation of $X$. Indeed, observe that $\omega$ and $\widetilde \omega$ are $C^\infty$-close, hence given a section of the bundle $\ker \omega$ (such as $X$), there exists a section $\widetilde X$ of $\ker \widetilde \omega$ that is $C^\infty$-close to $X$. Since $\iota_{\widetilde R}\widetilde \omega=0$, the section $\widetilde X$ is of the form $\widetilde X=g\widetilde R$ for some positive function $g$. We have thus shown that given any $X\in \mathcal{CSHS}(M)$, there exists a $C^\infty$-perturbation $\widetilde X$ of $X$ that admits a $\partial$-strong Birkhoff section with non-degenerate binding orbits. On the other hand, by \cite[Section 5]{CDHR}, this property holds for a $C^1$-neighborhood of $\widetilde X$. 
\end{proof}

\begin{Remark}\label{rem:fixvol}
    Notice that the $C^\infty$-perturbation $\widetilde X$ of $X$ in the previous proof is obtained by producing a $C^\infty$-perturbation $\widetilde \omega$ of an arbitrary given stabilizable 2-form $\omega$ such that $\iota_X\omega=0$. In particular, if $X$ preserves a given volume form $\mu$, and $\iota_X\mu$ is chosen as the stabilizable 2-form $\omega$, then we can define the vector field $\widetilde X$ by $\iota_{\widetilde X}\mu=\widetilde \omega$. In particular, if the set of vector fields that preserve $\mu$ is denoted by $\mathfrak{X}_\mu(M)$, the previous proof shows that the set of vector fields in $\mathcal{CSHS}(M)\cap \mathfrak{X}_\mu(M)$ that admit a Birkhoff section is dense in $\mathcal{CSHS}(M)\cap \mathfrak{X}_\mu(M)$.
\end{Remark}

\section{Analytic integrable Reeb flows}\label{sec:analytic}

In this last section, we analyze the case of SHS whose Reeb flow admits a real analytic first integral $f\in C^\omega (M)$. Every time that we speak of an analytic object, we mean real analytic with respect to a real analytic structure on $M$. Real analytic SHS have beed studied previously. Given an analytic $(\lambda,\omega)$, the function $\frac{d\lambda}{\omega}$  is an analytic first integral of the Reeb vector field of $(\lambda,\omega)$. As shown by 
Cieliebak and Volkov \cite{CV2}, a stationary solution to the Euler equations on a Riemannian 3-manifold $(M,g)$ with an analytic Bernoulli function gives rise to a SHS whose Reeb flow admits an analytic first integral.

\begin{defi}\label{def:Bsplit}
Let $X$ be a nonvanishing vector field on a closed 3-manifold $M$. We say that $X$ admits a \emph{Birkhoff splitting} if there exists a finite collection of invariant tori $T_1,...,T_k$ such that:
\begin{itemize}
\item[-] the manifold $\widetilde M$ obtained by cutting $M$ open along $T_1,...,T_k$  is connected,
\item[-] the induced vector field $\widetilde X$ on $\widetilde M$ admits a Birkhoff section.
\end{itemize}
\end{defi}
Observe that the existence of a Birkhoff splitting reduces the dynamics of $X$ to the dynamics of a homeomorphism of a connected compact surface with boundary, as a Birkhoff section does. One could drop the assumption that the manifold $\widetilde M$ is connected, in which case the existence of a Birkhoff splitting for analytic integrable SHS follows immediately from \cite{CV2} and Proposition~\ref{prop:BirkT2}. 

\begin{theorem}\label{thm:split}
Let $(\lambda,\omega)$ be a stable Hamiltonian structure whose Reeb vector field admits a non-constant analytic first integral. Then the Reeb vector field defined by $(\lambda,\omega)$ admits a Birkhoff splitting.
\end{theorem}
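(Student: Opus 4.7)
The plan is to combine the structural decomposition of integrable SHS developed in \cite{CV2} with the helix box gluing of Proposition~\ref{prop:BirkT2}. First I would dispose of the aperiodic case: if $X$ has no periodic orbits, the analytic statement in Theorem~\ref{thm:main1} forces $M \cong T^3$ or a positive parabolic torus bundle over $\mathbb{S}^1$ admitting a torus global section, which is already a Birkhoff splitting (with zero tori removed). So one may assume $X$ has at least one periodic orbit.

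Next I would set up the decomposition. Because $X$ preserves the given analytic integral and $f = d\lambda/\omega$ is itself a first integral (from $d(f\omega)=df\wedge\omega=0$ in dimension three one gets $df(X)=0$), both functions have finitely many critical values and their regular level sets are finite disjoint unions of invariant tori. Invoking \cite{CV2}, I would obtain finitely many invariant tori $T_1,\ldots,T_m$ cutting $M$ into pieces $M_1,\ldots,M_k$ of three types: (a) suspension pieces carrying a global section; (b) Liouville pieces $T^2\times I$ with non-constant linear slope, on which Proposition~\ref{prop:BirkT2} produces Birkhoff sections with freely prescribed boundary curves on both boundary tori; and (c) neighborhoods of singular level sets of $f$, for which the explicit analytic normal forms of \cite{CV2} should again allow the construction of local Birkhoff sections with controlled boundary curves.

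The final step is to assemble the local Birkhoff sections into a global one on a connected complement. I would pass to the dual graph $\Gamma$ of the decomposition (one vertex per piece, one edge per torus) and fix a spanning tree $T \subset \Gamma$; the tori corresponding to the non-tree edges form the collection $\{T_{i_1},\ldots,T_{i_r}\}$ to be removed, ensuring that $\widetilde{M} = \overline{M \setminus \bigsqcup_j T_{i_j}}$ is connected. Along each tree edge the two adjacent local Birkhoff sections must be matched across the shared torus, which is possible by the flexibility in Proposition~\ref{prop:BirkT2}: one can always choose boundary homology classes on the Liouville collars so that the curves of the two sides agree. Iterating this gluing along the tree produces a connected Birkhoff section of the induced flow on $\widetilde{M}$.

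The main obstacle is handling the singular pieces of type (c). Without any non-degeneracy hypothesis on $X$, the broken book machinery of Section~\ref{s:SHSsections} does not apply directly, and one is forced to rely on the explicit analytic models supplied by \cite{CV2}. The delicate point is to verify that in each such singular piece one can either construct a local Birkhoff section with prescribable boundary curves in the Liouville collar (so that it can be glued), or else include one of its bounding tori in the removed set without destroying connectedness of $\widetilde{M}$. Making these two options compatible with the spanning tree choice is where the combination of the two techniques has to be executed most carefully.
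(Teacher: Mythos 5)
Your outline correctly identifies the right ingredients (the structure theory of \cite{CV2}, the graph/spanning-tree bookkeeping, and the helix boxes of Proposition~\ref{prop:BirkT2}), and even the spanning-tree idea is essentially how the paper handles connectedness (the paper does it iteratively by cutting when a cycle is encountered, which is equivalent). However, there are two real gaps, and you explicitly concede the second one. First, the claim that Proposition~\ref{prop:BirkT2} lets you ``freely prescribe boundary curves on both boundary tori'' of a Liouville piece is only valid when the slope of $X$ is \emph{non-constant} there. The paper has to treat three cases separately — non-constant slope, constant rational slope, constant irrational slope — and in the two constant cases the section must be continued with the \emph{same} induced homology class through the integrable region; there is no freedom to re-choose. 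This constraint propagates into the singular piece you meet next, and ignoring it makes the gluing step fail.

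Second, the ``delicate point'' you name for type (c) pieces is exactly where the paper's proof does new work, and you leave it as an open problem rather than resolving it. The key tool is Proposition~\ref{prop:CV2}, a mild extension of \cite[Proposition~2.8]{CV2}: a closed one-form $\eta_0$ defined near a closed stratified subset $K_i\subset S_{c_i}$, positive on $X$ and with periods $\frac{1}{d_\gamma}$ over each one-dimensional substratum $\gamma\subset K_i$, extends to a closed form $\eta$ on all of $N_i$ with the same two properties. To apply it, one must check that the form dual to the incoming section can be rescaled so that its period over \emph{every} one-dimensional substratum of $K_i$ is $\frac{1}{d_\gamma}$; the paper verifies this from the homological identity $[d_\gamma\gamma]=[d_\kappa\kappa]$ of \cite[Proposition~3.2]{CV2}, which forces all the rescaled periods to match once one of them does. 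Without this compatibility argument the extension across the singular piece cannot be made, and this is precisely the step missing from your proposal. (Your preliminary reduction via Theorem~\ref{thm:main1} in the aperiodic case is harmless but also unnecessary — in that situation $f$ has only regular torus level sets and the general machinery applies directly.)
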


The proof of this theorem relies crucially in Sections 2 and 3 of \cite{CV2}. To avoid repeating several arguments, we refer to concrete parts of those sections. Throughout the proof, we will sometimes construct Birkhoff sections in domains of $M$ using the following trick due to Tischler \cite{Tis}: on a compact manifold with a non-vanishing vector field $X$, a closed 1-form $\alpha$ with rational cohomology class such that $\alpha(X)>0$ can be used to construct a global section of $X$. This is done by multiplying $\alpha$ by a large enough integer so that this multiple defines a fibration over $S^1$. Any fiber of this fibration will be a global section of the flow.

\begin{proof}
Let $f$ be the analytic first integral of the Reeb vector field $X$ of $(\lambda, \omega)$, which is not necessarily the function $\frac{d\lambda}{\omega}$. For each critical value $c_i$, let $S_{c_i}$ be the union of the connected components of $f^{-1}(c_i)$ that are not regular. Then let $N_i$ be the intersection of a small open neighborhood of $S_{c_i}$ with $f^{-1}([c_i-\delta,c_i+\delta])$, where $\delta$ is small enough so that $N_i$ is a compact submanifold with boundary, and the boundary is a union of invariant tori given by connected components of $f^{-1}(c_i \pm \delta)$. Let $U=\overline{M\setminus N}$ be the closure of the complement of $N=\bigsqcup_{c_i}N_i$. In $U$, the function $f$ is everywhere regular and so the Reeb vector field of $(\lambda,\omega)$ is orbit equivalent to a linear flow on each regular torus of $f$, e.g. by \cite[Theorem 3.3]{CV1}. Since $f$ is real analytic, it has a finite number of critical values and  $U$ decomposes as a finite union of domains each diffeomorphic to $T^2\times I$. 

We now associate to the decomposition $M=N\cup U$ a graph: each connected component of $N$ is a vertex and each connected component of $U$ is an edge. Since each connected component of $U$ is of the form $T^2\times I$, it has two boundary components that coincide with boundary components of $N$. Thus we put an edge between two vertices if there is a connected component of $U$ intersecting their boundaries. This graph is connected, can have cycles and is finite.

The strategy of the proof is as follows: we choose a vertex $v_1$ and find a global section of the flow in the corresponding manifold $N_1$. Then we  extend the section as a Birkhoff section along the connected components of $U$ that correspond to the edges having at least one endpoint in $v_1$. Next, for each of those edges, we try to extend the Birkhoff section on the component $N_j$ of $N$ that corresponds to the other endpoint. Here we need to consider several cases (for example, both endpoints of the edge could be $v_1$) and it is not always possible to extend the Birkhoff section. We either extend the Birkhoff section to $N_j$ or cut along the boundary torus of $\partial N_j$ that corresponds to that edge. If we cut along a torus, we obtain a new manifold $\widetilde M_1$ with an associated graph that is obtained by removing the corresponding edge from the original graph. Recursively, we continue the construction along the vertices that are at distance two from $v_1$. As we will see in the proof, the only cases where we might have to cut along a torus can occur when we reach a vertex corresponding to a component of $N$ where we already constructed a Birkhoff section, i.e. when we circulate along a cycle of the graph. Hence, possibly after cutting open some cycles of the graph, the graph resulting after extending the Birkhoff section to all the vertices is always connected. We end up with a Birkhoff splitting.  

\smallskip

Following \cite{CV2}, let us recall the structure of $S_{c_i}$, that is by hypothesis a real analytic set and thus it is stratified with respect to dimension. By \cite[Proposition 2.6]{CV2}, each $S_{c_i}$ has finitely many connected components and each one is of one of the following forms:
\begin{enumerate}
\item a two-torus,
\item a Klein bottle, 
\item a periodic orbit,
\item a union of periodic orbits (the 1-dimensional strata) and open cylinders and M\"obius strips that are embedded (the 2-dimensional strata). The closure of the open cylinders and M\"obius strips are $C^1$-immersed surfaces whose boundary components are finite coverings of the 1-dimensional strata in $S_{c_i}$.
\end{enumerate}
 Given a 1-dimensional substratum $\gamma$ of $S_{c_i}$ (that is a periodic orbit of $X$), we denote by $d_{\gamma}\in \mathbb{N}$ the folding number of the covering of $\gamma$ by the 2-dimensional cylinders and M\"obius bands, i.e. the number of connected components of $\mathcal{U}\setminus \gamma$ where $\mathcal{U}$ is a small enough neighborhood of $\gamma$ inside $S_{c_i}$.
 
 The complement of $S_{c_i}$ is a finite union of regular integrable domains diffeomorphic to $T^2\times (0,1)$. Let $V$ be connected component of this set, whose boundaries are contained in $f^{-1}(c_1)\cup f^{-1}(c_2)$ with $c_1,c_2$ critical values, not necessarily distinct. Then by \cite[Lemma 3.4]{CV2}, if the slope of $X$ is constant in $V$ then:
\begin{itemize}
\item[-] If the slope is rational, each $\overline{V}\cap f^{-1}(c_1)$ and $\overline{V}\cap f^{-1}(c_2)$ is either a periodic orbit, a rational torus, a Klein bottle or a union of periodic cylinders and M\"obius bands connected along their boundaries (possibly multiply covered).
\item[-] If the slope is irrational, each $\overline{V}\cap f^{-1}(c_1)$ and $\overline{V}\cap f^{-1}(c_2)$ is either an isolated periodic orbit or an irrational torus.
\end{itemize}
Observe that in the rational case, the set $\overline{V}\cap f^{-1}(c_1)$ (or $\overline{V}\cap f^{-1}(c_2)$) is not necessarily a whole connected component of the level set $f^{-1}(c_i)$, but instead a connected substratum of it. 

Our main tool follows from the proof of \cite[Proposition 2.8]{CV2}.
 
\begin{prop}\label{prop:CV2}
Let $\eta_0$ be a closed 1-form defined in an open neighborhood of a closed stratified subset $K_i$ of $S_{c_i}$ such that $\eta_0(X)>0$ and $\int_{\gamma}\eta_0=\frac{1}{d_{\gamma}}$ for each 1-dimensional substratum $\gamma$ (which is a periodic orbit) of $K_i$. Then $\eta_0$ extends to a closed-one form $\eta$ in all $N_i$ satisfying $\eta(X)>0$ and $\int_{\gamma} \eta=\frac{1}{d_{\gamma}}$ for each 1-dimensional substratum $\gamma$ of $S_{c_i}$.
\end{prop}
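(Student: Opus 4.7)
The plan is to extend $\eta_0$ in two stages: first across the remaining strata of $S_{c_i}\setminus K_i$ staying inside $S_{c_i}$ itself, and then transversally into the ambient neighborhood $N_i$. Recall from \cite[Proposition 2.6]{CV2} that $S_{c_i}\setminus K_i$ consists of finitely many 2-dimensional strata (open cylinders $C$ or open M\"obius strips) whose closures are compact surfaces with boundary, together with possibly some 1-dimensional strata (periodic orbits $\gamma$) not contained in $K_i$. Fix such a 2-dimensional stratum $C$ with closure $\overline C$. Each boundary component of $\overline C$ is a finite cover, of degree $d_\gamma$, of a one-dimensional substratum $\gamma\subset S_{c_i}$; hence the prescribed condition $\int_\gamma \eta=1/d_\gamma$ translates, after pullback, into the requirement that the period of $\eta$ along each boundary circle of $\overline C$ equals $1$. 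Since $\overline C$ is a cylinder or M\"obius band, $H^1(\overline C;\R)\cong \R$ and the cohomology class is determined by a single boundary period; a suitable extension exists because the periods along the boundary circles already meeting $K_i$ equal $1$ by hypothesis.

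I would then construct this extension concretely stratum by stratum. On each 2-dimensional stratum $C$ the Reeb vector field $X$ is tangent and nonvanishing, so $C$ is foliated by orbits of $X$. Covering $\overline C$ by flow boxes of $X$, I would build a primitive of $\eta_0$ on the portion of $\overline C$ where $\eta_0$ is already defined, extend the primitive to all of $\overline C$ so that its derivative along $X$ is strictly positive (this enforces $\eta(X)>0$), and finally close up the resulting 1-form so that it has total period $1$ on each boundary component. The construction is iterated: after each step the enlarged closed subset, equipped with the extended closed 1-form, still satisfies the hypothesis of the proposition with respect to the remaining strata.

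For the transversal extension, the complement $N_i\setminus S_{c_i}$ is a finite union of integrable domains $V_j\cong T^2\times(0,\delta_j)$ where $X$ is $T^2$-invariant. On each $V_j$ I would seek $\eta$ in the $T^2$-invariant form $\eta=a\,dx+b\,dy+c(t)\,dt$ in the standard coordinates; closedness of $\eta$ is automatic, while the constants $a,b$ are determined by the cohomology class of the Stage 1 extension on the component of $\partial V_j$ lying in $S_{c_i}$. Positivity $\eta(X)>0$ reduces to the linear condition $aX_x+bX_y>0$, which is inherited from the periods on the adjacent periodic substratum (where $\eta_0(X)>0$ was already arranged). The transverse profile $c(t)$ is then chosen so that the form glues smoothly with the Stage 1 extension in a collar of $S_{c_i}$, and the two pieces combine into a global closed 1-form $\eta$ on $N_i$.

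The hard part will be the bookkeeping in Stage 1 when several 2-dimensional strata meet at a common periodic orbit $\gamma$: the folding number $d_\gamma$ precisely records the total multiplicity with which those strata cover $\gamma$, and one must verify that the period normalizations $\int=1$ along the various boundary circles of the corresponding compact surfaces are globally consistent across strata. This consistency is exactly what the definition of $d_\gamma$ in \cite[Proposition 2.6]{CV2} is designed to ensure, so the argument closes by invoking that structural fact, together with the fact that on each cylinder or M\"obius band the space of closed 1-forms with prescribed integer boundary periods is non-empty and stable under the perturbations needed to guarantee $\eta(X)>0$.
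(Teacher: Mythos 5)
The paper's own proof is a short reduction: it observes that the case $K_i=\emptyset$ is literally \cite[Proposition 2.8]{CV2}, and then, for general $K_i$, it chooses closed one-forms near each one-dimensional substratum \emph{not} in $K_i$ and invokes the extension argument of \cite[Section 2, pp.\ 463--464]{CV2} to push a one-form, given near the boundary of a two-stratum, across that two-stratum (simultaneously in the tangential and transversal directions). Your proposal takes a genuinely different route: rather than outsourcing the extension step to CV2, you attempt a from-scratch two-stage construction. Unfortunately, as written, there are gaps that mean it does not yet constitute a proof.

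The first and most serious gap is in Stage 1. You propose to ``build a primitive of $\eta_0$ on the portion of $\overline C$ where $\eta_0$ is already defined'' and then extend that primitive. But the period of $\eta_0$ around a boundary circle of $\overline C$ already lying over $K_i$ is normalized to equal $1$, and a collar of that circle retracts onto the circle; hence $\eta_0$ is \emph{not} exact on the set where it is given, and no primitive exists. What is actually required is either to pass to a covering of $\overline C$ on which the form becomes exact and keep track of the deck action, or to work with overlapping flow boxes and verify a cocycle/gluing condition in the spirit of CV2's proof. The phrase ``close up the resulting $1$-form so that it has total period $1$'' presupposes a global primitive that does not exist, so this step needs to be reformulated before it can carry any weight.

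The second gap is in Stage 2. You separate the problem into ``extend within $S_{c_i}$'' and ``extend transversally on the $T^2\times(0,\delta_j)$ pieces.'' But the Stage 1 output is a one-form on the $2$-complex $S_{c_i}$ only, while your Stage 2 ansatz $\eta = a\,dx + b\,dy + c(t)\,dt$ lives on $N_i\setminus S_{c_i}$. These two pieces do not overlap on an open set, so one cannot simply ``glue''; the actual difficulty is to show the $2$-complex form extends to an ambient closed one-form on a $3$-dimensional neighborhood of $S_{c_i}$ while keeping $\eta(X)>0$, and this is exactly the content of the CV2 argument that the paper cites. In effect your decomposition reproduces the original difficulty inside Stage 2 rather than resolving it. You also assert, but do not verify, that the constants $a,b$ dictated by the periods keep $aX_x(t)+bX_y(t)>0$ for all $t$ in the collar, and that the normalization $\int_\gamma\eta=1/d_\gamma$ is consistent when several two-strata share a common boundary orbit; both facts are true, but they require the structural input from \cite[Propositions 2.6, 2.8 and Lemma 3.4]{CV2} that the paper leans on explicitly. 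In short, the overall strategy (inward extension on the stratified set, then transversal extension) is reasonable, but the pivotal steps are either mis-stated (the primitive) or deferred without a mechanism (the ambient extension with positivity), whereas the paper closes exactly these gaps by citing the relevant CV2 machinery.
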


If $\eta_0$ is not prescribed near $K_i$ or $K_i=\emptyset$, Proposition~\ref{prop:CV2}  is exactly the statement of \cite[Proposition 2.8]{CV2} that provides an $\eta$ in $N_i$ satisfying the claimed properties.

\begin{proof}
Here a closed stratified subset $K_i$ will always be a collection of open cylinders, open M\"obius bands, and the circles in their closure. 

 From the proof of \cite[Proposition 2.8]{CV2}, we deduce  that one can prescribe $\eta_0$ near a connected substratum. Indeed, given $\eta_0$, we choose some closed one form $\eta_2$ near each 1-dimensional substratum $\gamma$ that is not in $K_i$, such that $\eta_2(X)>0$ and $\int_{\gamma}\eta_2=\frac{1}{d_{\gamma}}$. Now we have a 1-form defined near each 1-dimensional substratum of $S_{c_i}$ and in the whole closed stratified subset $K_i\subset S_{c_i}$. It is shown in \cite[Section 2, p. 463-464]{CV2} how to extend a given 1-form satisfying these two conditions from a neighborhood of the boundary of a two-stratum to a neighborhood of its interior. Hence, we apply this to each two-stratum that was not in $K_i$, to conclude Proposition~\ref{prop:CV2}.
\end{proof}

We start with the vertex $v_1$ that corresponds to a connected component $N_1$ of $N$. Proposition~\ref{prop:CV2}, with $K_1=\emptyset$, gives a closed 1-form $\eta_1$ such that $\eta_1(X)>0$ in $N_1$. Up to perturbing it, we can assume that $\eta_1$ defines a rational cohomology class so that it can be used to construct a global section $\Sigma_1$ (a surface with boundary) of $X$ in $N_1$. Observe that $\partial\Sigma_1=\Sigma_1\cap \partial N_1$.

We now consider the graph associated to the structural decomposition $N$, $U$ and choose an edge $e_1$  incident on $v_1$. Then $e_1$ corresponds to a connected component $U_1\cong T^2\times I\subset U$. Observe that $U_1\subset V_1$ with $V_1$ a connected component  of $M\setminus \left( \bigcup S_{c_i} \right)$. Observe that $e_1$ might have its two endpoints in $v_1$. Since $\Sigma_1$ is defined in $N_1$, it induces near at least one  boundary component of $U_1$ a section of the flow: that is, a collection of circles in each torus fiber of $U_1$ near that boundary component.  Denote by $N_2$ the connected component of  $N$ that corresponds to the other endpoint of $e_1$. It could happen that $N_2=N_1$. 

Three things can occur: the slope of $X$ in $V_1$ is non-constant, constant rational or constant irrational. If it is non-constant in $V_1$, we can assume it is non-constant in $U_1$ up to shrinking a bit the domains $N_1$ or $N_2$ and enlarging $U_1$. 

\smallskip

\textbf{The slope is non-constant.} We can use Proposition~\ref{prop:BirkT2} to find a Birkhoff section in $U_1$ that restricts to each torus fiber near the boundary component of $U_1\cap N_2$ as a given family of curves. Notice that the binding orbits will be $\partial$-strong. Hence, if $N_2=N_1$, then $\Sigma_1$ is defined near each boundary component of $U_1$ and we can find a Birkhoff section that extends $\Sigma_1$ inside $U_1$. If $N_2\neq N_1$, we apply Proposition~\ref{prop:CV2} to $N_2$, finding a closed form $\eta_2$ near $N_2$ that evaluates positively on the Reeb vector field and can be assumed to define a rational cohomology class in $N_2$. In particular, a fiber of the fibration defines global section $\Sigma_2$, which is defined near $U_1\cap N_2$. We can glue $\Sigma_1$ and $\Sigma_2$ with a Birkhoff section inside $U_2$ using Proposition~\ref{prop:BirkT2}, adding $\partial$-strong binding components.

\smallskip

\textbf{The slope is constant irrational.} In this case, both $N_1$ and $N_2$ are diffeomorphic to a neighborhood of irrational tori or to a neighborhood of an isolated periodic orbit. Thus $N_1$ and $N_2$ are either diffeomorphic to $T^2\times I$ or to $D^2\times S^1$. 

If $N_2=N_1$, then $N_1$ has at least two boundary components and thus $N_1\cong T^2\times I$. Then  $U=U_1$ diffeomorphic to $T^2\times I$ and $M$ is  obtained by gluing $N_1$ and $U$ along their boundaries. Cutting along one of the boundary components of $N_1$, we obtain $\widetilde{M}\cong T^2\times I$ and the flow is irrational in each fiber. Any homology class defines a global section, which induces a Birkhoff splitting for $X$.

Assume that $N_2\neq N_1$, we can begin by extending $\Sigma_1$ over $U_1$. If $N_2$ is a neighborhood of an irrational torus, then $\Sigma_1$ trivially extends to $N_2$. 
If $N_2\cong D^2\times S^1$, the section $\Sigma_1$ induces a collection of embedded circles in $\partial N_2$. If these circles are not meridian circles, then $\Sigma_1$ extends to a $\partial$-strong Birkhoff section in $D^2\times S^1$ whose binding is the core orbit $\{0\}\times S^1$. Otherwise $\Sigma_1\cap D^2\times S^1$ induces circles which are up to isotopy of the form $\{r=ct\}\times S^1 \times \{q\}$. In this last case, we simply extend $\Sigma_1$ as the disk $D^2\times \{q\}$, which is transverse to the flow, and hence no new binding components arise. Observe that we can isotope the section so that it is smooth.  

\smallskip

\textbf{The slope is constant rational.} As in the previous $\Sigma_1$ extends trivially to $U_1$. If $N_2=N_1$, we cut along one of the connected components of $\partial N_1$, as in the previous case.

If $N_2\neq N_1$, then we proceed as follows. Let $\eta_1$ be the rational closed 1-form defined in $N_1\cup U_1$ dual to $\Sigma_1$, it is then defined near one boundary component of $N_2$ and $\eta_1(X)>0$. Then $\eta_1$ can be further  extended to  $\overline V_1$. Consider now a neighborhood of $\overline{V}_1$  which contains a connected substratum $K_2$ of $S_{c_2}\cap N_2$ that is not necessarily equal to the connected set $S_{c_2}\cap N_2$.

We want to apply Proposition~\ref{prop:CV2} to extend $\eta_1$ to all $N_2$. Choose a 1-dimensional substratum $\gamma$ of $K_2$, we can choose a positive constant $c$ such that $c\eta_1$ satisfies $\int_\gamma c\eta_1=\frac{1}{d_{\gamma}}$. Let $\kappa$ be another 1-dimensional substratum of $K_2$, we claim that $\int_{\kappa} c\eta_1=\frac{1}{d_{\kappa}}$. Indeed, as argued in the beginning proof of \cite[Proposition 3.2]{CV2}, the homology class of $d_{\gamma}\gamma$ and $d_{\kappa}\kappa$ are both equal to that of any simple closed orbit of the integrable region $U_1$. Hence $[d_{\gamma}\gamma]=[d_{\kappa}\kappa]$, and since $\eta_1$ is closed it follows that $\int_{\kappa} c\eta_1= \frac{d_{\gamma}}{d_{\kappa}} \int_\gamma c\eta_1=\frac{1}{d_{\kappa}}$. We can apply Proposition~\ref{prop:CV2} to $c\eta_1$, obtaining as a byproduct an extension as well of $\eta_1$ to a closed 1-form positive on $X$ near $N_1\cup U_1\cup N_2$. If $\eta_1$ does not define a rational cohomology class in $N_1\cup U_1\cup N_2$, we perturb it so that it does and still evaluates positively on the Reeb vector field. A fiber of the fibration defined by this new 1-form is a global section of the Reeb vector field in $N_1\cup U_1\cup N_2$. We keep the notation $\eta_1$ for this rational 1-form on $N_1\cup U_1\cup N_2$.

\smallskip

We have now a $\partial$-strong Birkhoff section in the domain $N_1\cup U_1\cup N_2$ that is either connected or disconnected according to the cases above. We want to iterate the construction following a different edge of $v_2$, corresponding to a domain $U_2$ attached to $N_1\cup U_1\cup N_2$.

Let $N_3$ be the connected component of $N$  corresponding to the other vertex of $v_2$. In $N_1\cup U_1\cup N_2$ we either have a section dual to the 1-form $\eta_1$ or a $\partial$-strong Birkhoff section. In the second case, we start by blowing up the binding orbits to obtain a section in the resulting manifold with boundary. In both cases, we denote the corresponding manifold with boundary $\widetilde{M}_1$ endowed with a rational 1-form $\eta_1$.
All the arguments we used for $(N_1,\eta_1)$ apply to $(\widetilde{M}_1,\eta_1)$: if $N_3$ intersects $N_1\cup U_1\cup N_2$ and in $U_2$ the slope of $X$ is constant we cut along an invariant torus  and if it not we extend $\eta_1$. 

Doing this for each edge of the graph we end up 
with a Birkhoff section of $X$ in a manifold $\widetilde M$ that is connected and obtained by cutting $M$ along a finite number of invariant tori. Thus, we have shown that $X$ admits a Birkhoff splitting.
\end{proof}

{
\begin{Corollary}
    Let $(H,f)$ be an integrable system on a 4-dimensional symplectic manifold $(W,\Omega)$. Let $M$ be a (connected component) of a regular energy level set of $H$ such that $f|_M$ is an analytic non-constant function in $M$. If the Hamiltonian flow of $H$ restricted to $M$ has no invariant Reeb cylinders, then it admits a Birkhoff splitting in $M$.
\end{Corollary}
\begin{proof}
   The analysis carried out in \cite[Sections 2 and 3]{CV2} shows that $M$ is a stable energy level set: the Hamiltonian flow is parallel to the Reeb flow of a SHS on $M$. It has an analytic integral, so by Theorem~\ref{thm:split} it admits a Birkhoff splitting.
\end{proof}
}

\end{document}